\documentclass[11pt,a4paper]{article}
\usepackage{amssymb}
\usepackage{amsmath}
\usepackage{latexsym}
\usepackage{amsthm}
\usepackage{dsfont}
\usepackage{enumerate}
\usepackage{hyperref}

\newtheorem{thm}{Theorem}
\newtheorem{lem}[thm]{Lemma}
\newtheorem{cor}[thm]{Corollary}
\theoremstyle{definition}
\newtheorem{remark}{Remark}

\newcommand\extrafootertext[1]{
    \bgroup
    \renewcommand\thefootnote{\fnsymbol{footnote}}
    \renewcommand\thempfootnote{\fnsymbol{mpfootnote}}
    \footnotetext[0]{#1}
    \egroup
}

\usepackage{xpatch}
\xpatchcmd{\proof}{\itshape}{\normalfont\proofnameformat}{}{}
\newcommand{\proofnameformat}{}

\begin{document}

\renewcommand{\proofnameformat}{\bfseries}

\begin{center}
{\Large\textbf{Equidistribution of number theoretic point sets in the torus and the sphere in Wasserstein metric and $L^2$ discrepancy}}

\vspace{10mm}

\textbf{Bence Borda and Filippo Giannoni}

\let\thefootnote\relax\footnotetext{\emph{Key words.} optimal transport, Kronecker sequence, modular hyperbola, Fibonacci lattice}

\let\thefootnote\relax\footnotetext{2020 \emph{Mathematics Subject Classification.} 11K38, 11L05, 49Q22}
\end{center}

\vspace{10mm}

\begin{abstract}
We find the exact rate of equidistribution of several arithmetic point configurations in Wasserstein metric and $L^2$ discrepancy with respect to balls. Kronecker sequences in the torus and the Fibonacci lattice on the Euclidean unit sphere are shown to be more evenly distributed in Wasserstein metric than random points. Modular hyperbolas and modular parabolas in the torus behave like random points, although the constants in the asymptotics of the $L^2$ discrepancy with respect to balls differ from the random case. We also prove a sharp inequality between the quadratic Wasserstein metric and the $L^2$ discrepancy with respect to balls for an arbitrary probability measure on the torus or the sphere. In particular, we show that any point set that is optimally close to the uniform measure in $L^2$ discrepancy with respect to balls is also optimal in the quadratic Wasserstein metric.
\end{abstract}

\section{Introduction}

The Wasserstein metric $W_p$, $1 \le p \le \infty$, is a metric on the set of Borel probability measures $\mathcal{P}(M)$ on a metric space $M$. The notion originates in the theory of optimal transport and plays an important role in partial differential equations, geometric analysis and probability theory \cite{VI1,VI2} with further applications in computer vision \cite{RTG} and machine learning \cite{ACB}. Applications of the Wasserstein metric as a way of quantifying equidistribution results in number theory have also been explored in recent years, including in the context of Kronecker sequences and grids \cite{BS,ST}, quadratic residues \cite{BS,GR,ST}, exponential sums in finite fields \cite{KU}, prime number races \cite{BHU}, Heegner points and closed geodesics on modular surfaces \cite{HU2} and arithmetic quantum unique ergodicity \cite{GA,HU2}.

The main goal of the present paper is to find the exact rate of equidistribution of several number theoretic point sets and sequences in Wasserstein metric. We study Kronecker sequences, lattices, modular hyperbolas and modular parabolas in the torus $\mathbb{R}^d /\mathbb{Z}^d$, and the Fibonacci lattice on the Euclidean unit sphere $\mathbb{S}^2$. We also compare the behavior of the quadratic Wasserstein metric to the $L^2$ discrepancy with respect to balls. All formal definitions and notation are introduced in Section \ref{notation_section}.

The rate of equidistribution of a finite point set $a_1, a_2, \ldots, a_N \in M$ is quantified by the distance from the empirical measure $\mu_N=N^{-1} \sum_{n=1}^N \delta_{a_n}$, where $\delta_a$ is the Dirac measure concentrated at the point $a \in M$, to the uniform probability measure $\mathrm{Vol}$ on $M$. The best possible rate in both the torus $M=\mathbb{R}^d / \mathbb{Z}^d$ and the sphere $M=\mathbb{S}^d$ is
\begin{equation}\label{Wpbestrate}
\inf_{a_1, a_2, \ldots, a_N \in M} W_p (\mu_N, \mathrm{Vol}) \asymp N^{-1/d}, \qquad 1 \le p \le \infty,
\end{equation}
where the infimum is over all point sets of size $N$. Allowing arbitrary weights in the empirical measure, we have more precisely
\begin{equation}\label{Wpweightedbestrate}
\inf_{\substack{\mu \in \mathcal{P}(M) \\ |\mathrm{supp}(\mu)| \le N}} W_p (\mu, \mathrm{Vol}) = (c(p,d)+o(1)) N^{-1/d} \quad \textrm{as } N \to \infty,
\end{equation}
where the infimum is over all probability measures $\mu=\sum_{n=1}^N w_n \delta_{a_n}$ supported on at most $N$ points with weights $w_n \ge 0$, $\sum_{n=1}^N w_n=1$. The exact value of the constant $c(p,d)$ is known only in some special cases \cite{GL}. In comparison, for a sample of independent, identically distributed (i.i.d.) random points $a_1, a_2, \ldots, a_N \in M$, each uniformly distributed in $M$, and for all $1 \le p < \infty$, the corresponding rate in expectation is
\begin{equation}\label{iidrate}
\mathbb{E} (W_p (\mu_N, \mathrm{Vol})) \asymp \left\{ \begin{array}{ll} N^{-1/2} & \textrm{if } d=1, \\ (\log N)^{1/2} N^{-1/2} & \textrm{if } d=2, \\ N^{-1/d} & \textrm{if } d \ge 3. \end{array} \right.
\end{equation}
Thus i.i.d.\ random points attain the best possible rate in dimension $d \ge 3$, but not in dimensions $d=1$ and 2.

Relations \eqref{Wpbestrate} and \eqref{Wpweightedbestrate} actually hold in all $d$-dimensional compact, connected Riemannian manifolds without boundary \cite{GL,KL}. The upper bounds in \eqref{iidrate} in this generality were first proved by Ledoux \cite{LE}. The lower bound in \eqref{iidrate} in dimension $d \ge 3$ follows from \eqref{Wpbestrate}, whereas the lower bound in \eqref{iidrate} in dimension $d=2$ is due to Ajtai, Koml\'os and Tusn\'ady \cite{AKT} in the unit square $[0,1]^2$, see also \cite[Theorem 5.2]{AST} for the torus $\mathbb{R}^2 / \mathbb{Z}^2$ and \cite[Proposition 8]{HPZ} for the sphere $\mathbb{S}^2$.

The most classical example of an equidistributed sequence in number theory is the Kronecker sequence $n \alpha + \mathbb{Z}^d$ in the torus $\mathbb{R}^d/\mathbb{Z}^d$ with a given vector $\alpha \in \mathbb{R}^d$. Brown and Steinerberger \cite[Theorem 5]{BS} proved that the Kronecker sequence attains the best possible rate in the quadratic Wasserstein metric $W_2(\mu_N, \mathrm{Vol}) \ll N^{-1/d}$ in dimension $d \ge 2$ provided the vector $\alpha$ is badly approximable, that is, $\| \langle k, \alpha \rangle \|_{\mathbb{R}/\mathbb{Z}} \gg \| k \|_{\infty}^{-d}$ for all $k \in \mathbb{Z}^d$, $k \neq 0$. We generalize their estimate to the Wasserstein metric of arbitrary order $p$.
\begin{thm}\label{kroneckerbadlyapprox_theorem} Let $d \ge 2$, let $\alpha \in \mathbb{R}^d$ be a badly approximable vector and let $\mu_N=N^{-1} \sum_{n=1}^N \delta_{n\alpha + \mathbb{Z}^d}$. We have
\[ W_p (\mu_N, \mathrm{Vol}) \ll p^{(d+1)/(2d)} N^{-1/d}, \quad 1 \le p < \infty \]
and
\[ W_{\infty} (\mu_N, \mathrm{Vol}) \ll (\log N)^{(d+1)/(2d)} N^{-1/d} \]
with implied constants that depend only on $\alpha$.
\end{thm}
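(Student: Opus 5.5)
We plan to bound $W_p$ through a smoothing argument combined with Fourier analysis. Let $\varphi_t$ be the heat kernel on $\mathbb{R}^d/\mathbb{Z}^d$, or alternatively a compactly supported smooth mollifier at scale $t$. Let $\nu=\mu_N * \varphi_t$. The triangle inequality gives
\[ W_p(\mu_N,\mathrm{Vol}) \le W_p(\mu_N,\nu)+W_p(\nu,\mathrm{Vol}). \]
For the first term we convolve with a probability kernel. This transports each atom by a random displacement with law $\varphi_t$. With the heat kernel, the displacement is roughly Gaussian with standard deviation $\asymp t^{1/2}$, so $W_p(\mu_N,\nu) \ll \sqrt{p}\, t^{1/2}$, since Gaussian $p$-th moments scale like $\sqrt{p}$. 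For $p=\infty$ we instead use a compactly supported kernel and get $W_\infty \le$ (support radius).

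For the second term, $\nu$ has a smooth density $f=1+g$ with
\[ \hat g(k)=\hat\mu_N(k)\,e^{-4\pi^2|k|^2 t}, \qquad k\neq 0, \qquad \hat\mu_N(k)=\frac{1}{N}\sum_{n=1}^N e^{2\pi i n\langle k,\alpha\rangle}. \]
This satisfies $|\hat\mu_N(k)|\le \min(1, 1/(2N\|\langle k,\alpha\rangle\|))$. We compare to Vol using a known $L^\infty$-type estimate. If $\|g\|_\infty\le 1/2$, then $W_p(\nu,\mathrm{Vol})\ll \|g\|_{\infty}$ in diameter-normalized terms. A cleaner route is the negative Sobolev bound
\[ W_p(f,\mathrm{Vol}) \ll p\,\|\nabla \Delta^{-1} g\|_{L^p}, \]
valid when the density is bounded below (Peyre-type / Ledoux). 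We bound the $L^p$ norm by the sup norm and then by the absolutely convergent Fourier series:
\[ \|\nabla\Delta^{-1}g\|_\infty \ll \sum_{k\neq0} \frac{|\hat\mu_N(k)|}{|k|}\,e^{-4\pi^2|k|^2t}. \]
Badly approximability gives $\|\langle k,\alpha\rangle\|\gg \|k\|_\infty^{-d}$. Summing dyadically over $\|k\|_\infty\sim 2^j$, the points $\langle k,\alpha\rangle$ mod 1 are $\gg 2^{-jd}$-separated, because differences of such $k$ are again of norm $\ll 2^j$. Therefore
\[ \sum_{\|k\|\sim 2^j} \min\bigl(1,(N\|\langle k,\alpha\rangle\|)^{-1}\bigr) \ll 2^{jd}\,\frac{\log N}{N} + 1. \]
The extra $\log$ must be handled carefully to reach the sharp rate. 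Combining this with the weight $2^{-j}$ and the cutoff $2^j\lesssim t^{-1/2}$ yields roughly $t^{-(d-1)/2}/N$ up to logs. We also need $\|g\|_\infty\ll t^{-d/2}/N\ll 1$, which forces $t\gg N^{-2/d}$.

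Next we choose the scale to balance the two terms. Taking $t = C p^{\beta} N^{-2/d}$ with a suitable exponent $\beta$ balances $\sqrt{pt}$ against the Fourier term carrying the factor $p$. Note that $\sqrt{p t}\cdot$ and $p\, t^{-(d-1)/2}N^{-1}$ equal at $t^{d/2}\asymp \sqrt p\, N^{-1}$. This gives the bound
\[ W_p \ll \sqrt{p}\,(\sqrt p/N)^{1/d} = p^{(d+1)/(2d)} N^{-1/d}, \]
which matches the claimed exponent and is reassuring. For $W_\infty$ we use $p\asymp\log N$ and the standard fact that $W_\infty$ is controlled by $W_p$ with $p\sim\log N$ when the density is bounded below. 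Alternatively we run the same argument with a Gaussian truncated at radius $\sqrt{t\log N}$.

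The main obstacle is the Fourier sum estimate without a spurious $\log N$ factor. The separation argument naively gives $\log N$ from the harmonic sum over $\|\langle k,\alpha\rangle\|$. To remove it, we will use the weighted separation more carefully: in the dyadic shell $\|k\|\sim 2^j$, the $m$-th smallest value of $\|\langle k,\alpha\rangle\|$ is $\gg m 2^{-jd}$. The sum $\sum_m \min(1, 2^{jd}/(Nm))$ contributes $2^{jd}N^{-1}\log(N2^{-jd})$, and the logarithm is only large for small shells, where the factor $2^{-j}$ and the geometric growth make it summable. Summing over $j$ up to $2^{jd}\asymp N$ should then leave no logarithm. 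A second delicate point is the $p$-dependence of the Sobolev-to-Wasserstein inequality. If the factor is worse than linear in $p$, we will instead bound $\|\nabla\Delta^{-1} g\|_{L^p}$ directly and rebalance $t$ accordingly.
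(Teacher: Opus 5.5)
Your framework (heat smoothing, the triangle inequality, transport bounded by $\nabla\Delta^{-1}$) is the one behind Theorem \ref{torussmoothing_theorem2}. There is, however, a genuine gap: the logarithm cannot be removed the way you propose, and your final balancing silently assumes that it was. In a shell $\|k\|_\infty \sim 2^j$ with $2^{jd} \le N$, every term of $\sum_{m \ll 2^{jd}} \min(1, 2^{jd}/(Nm))$ equals $2^{jd}/(Nm)$. So the shell sum is $\asymp 2^{jd}N^{-1}\log(2^{jd})$, not $2^{jd}N^{-1}\log(N2^{-jd})$. The logarithm is therefore largest at the top shell $2^j \asymp t^{-1/2}$, $2^{jd}\asymp N$, which is exactly the shell that dominates the geometric sum; your first bound $2^{jd}N^{-1}\log N + 1$ was already essentially sharp there. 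This is not an artifact of the counting. By pigeonhole, for $\varepsilon \gg Q^{-d}$ there are $\gg \varepsilon Q^d$ nonzero $k$ with $\|k\|_\infty \le Q$ and $\|\langle k,\alpha\rangle\|_{\mathbb{R}/\mathbb{Z}} \le \varepsilon$. This forces $\sum_{0<\|k\|_\infty\le Q} (N\|\langle k,\alpha\rangle\|_{\mathbb{R}/\mathbb{Z}})^{-1} \gg Q^d\log Q/N$. Hence the chain $L^p \to L^\infty \to \ell^1$ with the majorant $|\widehat{\mu_N}(k)| \le (2N\|\langle k,\alpha\rangle\|_{\mathbb{R}/\mathbb{Z}})^{-1}$ gives at best $W_p \ll \sqrt{pt} + p\,t^{-(d-1)/2}\log(1/t)/N$, i.e.\ $W_p \ll p^{(d+1)/(2d)}(\log N)^{1/d}N^{-1/d}$. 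The same logarithm sits in your claimed $\|g\|_\infty \ll t^{-d/2}/N$, so the Fourier side cannot certify a density lower bound at $t \asymp N^{-2/d}$ either.

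Two ideas are missing, and you need both. First, do not pass through $L^\infty$. By Hausdorff--Young with $q=p/(p-1)$, $\|\nabla\Delta^{-1}g\|_{L^p}$ is at most the $\ell^q$ norm of $e^{-4\pi^2|k|^2t}|\widehat{\mu_N}(k)|/(2\pi|k|)$. The harmonic sum $\sum_m m^{-1}$ then becomes $\sum_m m^{-q} \ll 1/(q-1) \asymp p$, which trades $\log N$ for a factor $p^{1/q}\asymp p$ and gives $\|\nabla\Delta^{-1}g\|_{L^p} \ll p\,t^{-(d-1)/2}/N$. But combined with the factor $p$ of your transport inequality, the Fourier term becomes $p^2t^{-(d-1)/2}/N$, and balancing only yields $p^{(d+3)/(2d)}N^{-1/d}$. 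Second, therefore, that factor $p$ must be removed. The inequality $W_p(\nu,\mathrm{Vol}) \le p\|\nabla\Delta^{-1}g\|_{L^p}$ needs no density lower bound at all: the $p$ is $\int_0^1(1-s)^{-1+1/p}\,\mathrm{d}s$, coming from the linear interpolation between $\mathrm{Vol}$ and $\nu$. A lower bound $\delta$ on the density of $\nu$ improves it to $\int_0^1(1-s+s\delta)^{-1+1/p}\,\mathrm{d}s = p(1-\delta^{1/p})/(1-\delta) \le \log(1/\delta)/(1-\delta)$. The paper obtains $\delta \gg 1$ geometrically rather than from Fourier bounds. By Cassels' quantitative Kronecker theorem (Lemma \ref{cassels_lemma}), every axis-parallel cube of side $\asymp N^{-1/d}$ contains some $n\alpha$, $1\le n\le N$. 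Hence $\mu_N(B(x,r)) \gg r^d$ for $r \gg N^{-1/d}$, and with $r=t^{1/2}$ the heat-smoothed density is bounded below by a constant depending only on $\alpha$. This is how Theorem \ref{torussmoothing_theorem2} is applied in the paper. After it, $W_p \ll \sqrt{pt} + p\,t^{-(d-1)/2}/N$, and your choice $t^{d/2}\asymp\sqrt{p}/N$ works. For $W_\infty$, Lemma \ref{WqWplemma} gives $W_\infty(\mu_N,\mathrm{Vol}) \le 2e\,W_{\log N}(\mu_N,\mathrm{Vol})$ for any equal-weight empirical measure, with no density hypothesis.
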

Kronecker sequences thus attain the best possible rate \eqref{Wpbestrate} for all finite $p$, and in particular, are more evenly distributed than random points in dimension $d=2$. We do not know whether the logarithmic factor is necessary in the estimate for $W_{\infty}$. In comparison, in dimension $d=1$ the rates are $W_p(\mu_N, \mathrm{Vol}) \ll (\log N)^{1/2} N^{-1}$, $1 \le p < \infty$, and $W_{\infty}(\mu_N, \mathrm{Vol}) \ll (\log N) N^{-1}$, and both estimates are sharp for infinitely many $N$ \cite{GR}. Generalizations of Theorem \ref{kroneckerbadlyapprox_theorem} to a.e.\ $\alpha$ and to higher rank lattices are discussed in Section \ref{kronecker_section}.

We also consider a lattice-like point set on the 2-dimensional unit sphere obtained by mapping a lattice in the unit square to the sphere using a projection that originates in cartography. The Lambert cylindrical equal area projection is defined as the map $\phi: [0,1]^2 \to \mathbb{S}^2$,
\[ \phi (x_1, x_2) = \left( 2 \sqrt{x_2-x_2^2} \cos (2 \pi x_1), 2 \sqrt{x_2-x_2^2} \sin (2 \pi x_1), 1-2x_2 \right) . \]
Geometrically, $\phi$ stretches the unit square to the rectangle $[0,2\pi] \times [-1, 1]$, identifies the two vertical sides by folding the rectangle to a cylinder of height 2 that touches the unit sphere along the equator, then projects the points on the cylinder to the sphere orthogonally to the vertical axis of the sphere. Most importantly, $\phi$ preserves the normalized surface area.

Given $\alpha \in \mathbb{R}$ and $N \in \mathbb{N}$, the lattice $\{ (\{ n \alpha \}, n/N) : 0 \le n < N \} \subset [0,1]^2$, where $\{ \cdot \}$ denotes the fractional part function, is mapped to the set
\[ S_N (\alpha) = \left\{ \phi \left( n \alpha, \frac{n}{N} \right) : 0 \le n < N \right\} \subset \mathbb{S}^2 . \]
In the special case when $N=F_m$ is a Fibonacci number and $\alpha=F_{m-1}/F_m$, this is known as the Fibonacci lattice on the sphere. An irrational version of the same set with the golden mean $\alpha=(\sqrt{5}-1)/2$ is defined for all $N \in \mathbb{N}$. Numerical evidence suggests that the $L^{\infty}$ discrepancy with respect to balls and the Riesz energy of the Fibonacci lattice are strictly smaller than those of random points, but a formal proof of this remains wide open \cite{ABD,HMS}. In this paper, we prove that the Fibonacci lattice is indeed more evenly distributed than random points when measured in the Wasserstein metric of order $1 \le p < 2$. Our result applies to all badly approximable $\alpha$ such as the golden mean in the irrational setting, and also to best rational approximations to badly approximable numbers such as $F_{m-1}/F_m$ in the rational setting.

\begin{thm}\label{fibonacci_theorem} Let $\alpha \in \mathbb{R}$, $N \in \mathbb{N}$, and assume $\min_{1 \le n \le \sqrt{N}} n \| n \alpha \|_{\mathbb{R}/\mathbb{Z}} \ge c$ with some constant $c>0$. Let $\mu_N=N^{-1} \sum_{s \in S_N(\alpha)} \delta_s \in \mathcal{P}(\mathbb{S}^2)$ denote the empirical measure of the set $S_N(\alpha)$. For any $1 \le p <2$, we have
\[ W_p (\mu_N, \mathrm{Vol}) \ll (2-p)^{-5/2} N^{-1/2} \]
with an implied constant that depends only on $c$.
\end{thm}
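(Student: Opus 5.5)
We will transfer the problem from the sphere to the unit square via $\phi$, and then bound the Wasserstein distance on the square with a Fourier-analytic (or dyadic) inequality that is sensitive to the lattice structure. Since $\phi$ pushes Lebesgue measure $\lambda$ on $[0,1]^2$ forward to $\mathrm{Vol}$, and pushes the empirical measure $\nu_N$ of the lattice $\{(\{n\alpha\},n/N)\}$ forward to $\mu_N$, any coupling of $\nu_N$ and $\lambda$ induces a coupling of $\mu_N$ and $\mathrm{Vol}$. Hence $W_p(\mu_N,\mathrm{Vol})^p \le \int |\phi(x)-\phi(y)|^p \, d\pi(x,y)$ for every coupling $\pi$.

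\textbf{Metric distortion of $\phi$.} Horizontal distances get multiplied by roughly $\sqrt{x_2(1-x_2)}$, so $\phi$ is Lipschitz in $x_1$. In $x_2$ the derivative of $\sqrt{x_2-x_2^2}$ blows up like $(x_2(1-x_2))^{-1/2}$ near the poles. Precisely,
\[ |\phi(x)-\phi(y)| \ll |x_1-y_1|_{\mathbb{T}} + \frac{|x_2-y_2|}{\sqrt{\min(x_2,1-x_2)}+\sqrt{\min(y_2,1-y_2)}} + |x_2-y_2|^{1/2}, \]
with the last term dominating only near the poles. This pole singularity is the source of the restriction $p<2$ and of the factor $(2-p)^{-5/2}$.

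\textbf{Local transport.} We do not use a single global coupling. Instead we decompose $[0,1]$ in the $x_2$ direction dyadically, into bands $B_j=\{2^{-j-1}\le \min(x_2,1-x_2)< 2^{-j}\}$ for $2^{-j}\gg 1/N$, plus the polar caps of height $\asymp 1/N$ (or $\asymp N^{-1/2}$, to be tuned).
\begin{enumerate}[(i)]
\item Within each horizontal strip of height $h$, the points $n\alpha$ with $n$ in an interval of length $Nh$ form a Kronecker segment. Under the hypothesis $\min_{n\le\sqrt N} n\|n\alpha\|\ge c$, the three-gap theorem or Ostrowski expansion gives gaps $\asymp 1/(Nh)$ and discrepancy $\ll \log(Nh)/(Nh)$.
\item Choose strips of height $h\asymp N^{-1/2}$, the natural cell size, giving roughly $\sqrt N$ points per strip. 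Inside each strip, transport the points to the strip's Lebesgue mass. Horizontally this costs $\ll 1/(Nh)\asymp N^{-1/2}$ in the $x_1$ coordinate, using one-dimensional $W_p$ estimates for badly approximable Kronecker sequences (cf.\ the $d=1$ results of \cite{GR}; boundedness of partial quotients up to $\sqrt N$ gives a bounded version, or at worst the $\log$ can be absorbed). Vertically it costs $\ll h$.
\item Strip masses are off by at most one point, i.e.\ by $1/N$. This is corrected by a vertical transport of mass $1/N$ per strip across distance $O(h)$, which is negligible.
\end{enumerate}

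\textbf{Mapping to the sphere.} In band $B_j$ at height $t\asymp 2^{-j}$, a vertical displacement $h$ corresponds to spherical distance $h/\sqrt t$, and a horizontal displacement $\delta$ to $\sqrt t\,\delta$. On the sphere the natural scale is $N^{-1/2}$, so we should use strip height $h_t\asymp \sqrt{t}N^{-1/2}$, adapted to the band. Then both contributions are $\ll N^{-1/2}$, except that horizontal gaps become $1/(N h_t)$, giving spherical distance $\sqrt t/(N h_t)=N^{-1/2}$. This works while $h_t\ge 1/N$ and the strip still contains a Kronecker segment with good gap control, i.e.\ $t\gg 1/N$. Near the poles, within distance $t\lesssim N^{-1}$, only $O(1)$ points lie in a cap of spherical radius $\asymp N^{-1/2}$; transporting them costs $O(N^{-1/2})$ with mass $O(1/N)$, which is harmless.

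The remaining issue, and \textbf{the main obstacle}, is that strips of height $\sqrt t N^{-1/2}$ contain segments of length only $\sqrt{tN}$. For these we need uniform control of the Kronecker segment $\{n\alpha\}$, $n\in[n_0,n_0+L)$, in $W_p$ on the circle with error $\ll L^{-1}$ times a factor. The segment's discrepancy is $\ll \log L/L$, and its $W_p$ is only $\ll(\log L)^{1/2}/L$ on average. Summing the $p$-th powers over the dyadic bands with $t=2^{-j}$ then gives $\sum_j (\text{mass } 2^{-j})\cdot(\log(2^{-j}N))^{p/2}N^{-p/2}$, which is fine. The real difficulty is the interaction with the vertical rebalancing between adjacent bands: the mass imbalances of size $\log L/(N)$ per strip accumulate and must be transported across bands. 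The resulting vertical cost, weighted by $t^{-1/2}$, sums like $\sum_j 2^{j(p/2-1)}\cdot(\ldots)$, which converges exactly when $p<2$. The sum is $\asymp(1-2^{p/2-1})^{-1}\asymp(2-p)^{-1}$ times polylogarithmic-in-$(2-p)^{-1}$ losses, and careful bookkeeping of these should produce the exponent $5/2$. We first attempt to control these imbalances with a telescoping flow in the $x_2$ variable (a one-dimensional transport of the marginal discrepancy $D(x_2)=\nu_N([0,1]\times[0,x_2])-x_2$, which is $\ll 1/N$ uniformly since the $x_2$-coordinates are exactly $n/N$). If that works, the vertical cost is in fact trivial and the $(2-p)$ loss arises only from the horizontal transports near the poles.
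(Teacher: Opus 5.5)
\textbf{Gap: the in-strip horizontal transport.} Step (ii) needs the horizontal transport inside a strip of $L$ consecutive indices to cost $O(1/L)$ in $x_1$. For a general length $L$ this is false. For badly approximable $\alpha$, the one-dimensional bound $W_p\ll(\log L)^{1/2}/L$ for $\{n\alpha\}$, $n<L$, is sharp for infinitely many $L$ (the $d=1$ result recalled in the introduction). Bounded partial quotients therefore do not remove the logarithm. Each strip is a rotated copy of such a segment, and any coupling confined to a strip pays at least the one-dimensional cost of the strip's horizontal marginal. So without a special choice of $L$ your scheme loses this factor.

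Your own sum shows it. The expression $\sum_j 2^{-j}(\log(2^{-j}N))^{p/2}N^{-p/2}$ is dominated by $j=0$ and is $\asymp(\log N)^{p/2}N^{-p/2}$. This gives only $W_p\ll(\log N)^{1/2}N^{-1/2}$, which is exactly the i.i.d.\ rate the theorem is meant to beat. It is not ``fine'' and cannot be absorbed.

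The last paragraph does not repair this. Strips made of consecutive levels $n/N$ carry their Lebesgue mass up to $1/N$, as you note in (iii), so there are no ``$\log L/N$ imbalances'' to ship between bands; that quantity is the horizontal discrepancy inside a strip. The exponent $5/2$ is asserted, not derived.

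\textbf{The missing idea and how it differs from the paper.} Choose the strip lengths so that each segment is almost a perfect grid: take $L=q$ to be a convergent denominator of $\alpha$.
\begin{enumerate}[(a)]
\item The hypothesis gives $q_{i+1}<1/\|q_i\alpha\|_{\mathbb{R}/\mathbb{Z}}\le q_i/c$ whenever $q_i\le\sqrt N$. So a denominator $q\asymp\sqrt{tN}$ exists in every band.
\item Since $\|q\alpha\|_{\mathbb{R}/\mathbb{Z}}<1/q$ and $\gcd(p,q)=1$, the points $\{n\alpha\}$ with $n_0\le n<n_0+q$ lie within $1/q$ of the rotated grid $n_0\alpha+k/q$, $0\le k<q$, one point per grid node.
\item Send each point to (the arc of length $1/q$ around its grid node) $\times$ (its strip). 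This set has Lebesgue measure exactly $1/N$.
\item Tile $\{0,\dots,N-1\}$ exactly by such blocks: greedily near the poles, and with two consecutive (hence coprime) denominators $\asymp\sqrt N$ in the equatorial zone.
\end{enumerate}
Your distortion estimate then shows every point moves $O_c(N^{-1/2})$ on $\mathbb{S}^2$. This yields the claim, in fact with no $(2-p)$ factor at all.

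The paper argues differently. It folds the cylinder onto $\mathbb{R}^2/\mathbb{Z}^2$ and proves $W_{p'}(\nu_N,\mathrm{Vol})\ll p'^2N^{-1/2}$ by the Fourier smoothing inequality (Lemma \ref{fibonacci_lemma}). The two-dimensional exponential sums see the whole lattice, so no logarithm appears there. It then transfers to the sphere by a global H\"older argument against the pole singularity of $\phi$ (Lemma \ref{phipushforward_lemma}). That H\"older step is where the restriction $p<2$ comes from. Combining it with the $p'^2$ growth at $p'=4p/(2-p)$ produces the factor $(2-p)^{-5/2}$.
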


The Fibonacci lattice can be used as a quadrature rule for the purposes of numerical integration on the sphere. By Kantorovich duality \eqref{kantorovich}, Theorem \ref{fibonacci_theorem} with $p=1$ states that the worst case integration error in the class of Lipschitz functions attains the best possible rate $N^{-1/2}$ among all points sets of size $N$. In particular, this is better than the rate $(\log N)^{1/2} N^{-1/2}$ attained by random points.

The numerical evidence in \cite[Figure 26]{HMS} together with Theorem \ref{W2D2_theorem} below suggests that the factor $(2-p)^{-5/2}$ can be removed from the estimate in Theorem \ref{fibonacci_theorem} and in particular, $W_2 (\mu_N, \mathrm{Vol}) \ll N^{-1/2}$, but this remains open.

The proofs of Theorems \ref{kroneckerbadlyapprox_theorem} and \ref{fibonacci_theorem} are based on smoothing inequalities due to the first author and Cuenin \cite{BC} that estimate $W_p(\mu, \mathrm{Vol})$, $1 \le p \le \infty$, in terms of the Fourier transform of the probability measure $\mu$. In the case of an empirical measure $\mu_N$, the Fourier transform becomes an exponential sum. In addition to the lattices in Theorems \ref{kroneckerbadlyapprox_theorem} and \ref{fibonacci_theorem}, there is a large number of point sets and sequences originating in number theory for which the corresponding exponential sums can be effectively estimated. As an illustration of the flexibility of our approach, in Sections \ref{modularhyperbola_section} and \ref{modularparabola_section} we work out the details for modular hyperbolas and modular parabolas in the torus, the Fourier transforms of which are Kloosterman sums and quadratic Gauss sums, respectively.

Finally, we clarify the relationship between the quadratic Wasserstein metric $W_2$ and the $L^2$ discrepancy with respect to balls, denoted by $D_2$, another classical way of quantifying equidistribution. The distance from an arbitrary probability measure $\mu$ to the uniform probability measure $\mathrm{Vol}$ in these two metrics satisfy a general inequality, which we believe to be of independent interest.
\begin{thm}\label{W2D2_theorem} Let $M=\mathbb{R}^d/\mathbb{Z}^d$ or $\mathbb{S}^d$. For any $\mu \in \mathcal{P}(M)$, we have $W_2(\mu, \mathrm{Vol}) \ll D_2 (\mu, \mathrm{Vol})^{2/(d+1)}$ with an implied constant that depends only on $d$. The exponent $2/(d+1)$ is best possible.
\end{thm}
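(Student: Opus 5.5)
The plan is to pass to the Fourier side on both sides of the inequality. I will express $W_2(\mu,\mathrm{Vol})$ through a smoothing inequality and $D_2(\mu,\mathrm{Vol})$ through an exact Parseval identity, then compare the two frequency by frequency.

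\emph{Step 1 ($W_2$).} On the torus I will use the smoothing inequality of \cite{BC} for $p=2$, in the form
\[ W_2(\mu,\mathrm{Vol}) \ll \frac{1}{T} + \Big( \sum_{0<\|k\|<T} \frac{|\hat{\mu}(k)|^2}{\|k\|^2} \Big)^{1/2}, \qquad T \ge 1. \]
On $\mathbb{S}^d$ the analogue has $|\hat{\mu}(k)|^2/\|k\|^2$ replaced by $\sum_{\ell<T} \ell^{-2} \sum_m |\hat{\mu}(\ell,m)|^2$, where $\hat{\mu}(\ell,m)$ are the coefficients of $\mu$ against an orthonormal basis of spherical harmonics of degree $\ell$.

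\emph{Step 2 ($D_2$).} On the torus, $\mu(B(x,r))-\mathrm{Vol}(B(x,r))$ is the convolution of $\mu-\mathrm{Vol}$ with $\mathds{1}_{B(0,r)}$, so Parseval in $x$ and integration in $r$ give
\[ D_2(\mu,\mathrm{Vol})^2=\sum_{k\ne0}|\hat\mu(k)|^2 \int |\widehat{\mathds{1}_{B(0,r)}}(k)|^2\,dr. \]
On the sphere the same identity holds with Funk--Hecke coefficients of spherical caps in place of $\widehat{\mathds{1}_{B(0,r)}}(k)$. The key estimate is the lower bound
\[ \int |\widehat{\mathds{1}_{B(0,r)}}(k)|^2\,dr \gg \|k\|^{-(d+1)} \quad\text{for all } k\neq 0, \]
together with its analogue $\gg \ell^{-(d+1)}$ on the sphere.
\begin{itemize}
\item On the torus, $\widehat{\mathds{1}_{B(0,r)}}(k)=r^{d/2}\|k\|^{-d/2}J_{d/2}(2\pi r\|k\|)$. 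For large $\|k\|$, the asymptotic $J_{d/2}(z)^2\approx \frac{2}{\pi z}\cos^2(\cdots)$ averages to a positive proportion over any fixed $r$-interval, which gives $\asymp \|k\|^{-d-1}$.
\item For the finitely many small $k$, the integrand is continuous and not identically zero, so the integral is positive.
\item On the sphere, cap coefficients are expressed through Jacobi/Legendre polynomials, and Hilb-type asymptotics give the same conclusion.
\end{itemize}
This uniform lower bound is the step I expect to be the main technical obstacle, especially making the sphere version uniform in $\ell$.

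\emph{Step 3 (combination).} For $0<\|k\|<T$ write $\|k\|^{-2}=\|k\|^{d-1}\|k\|^{-(d+1)}\le T^{d-1}\|k\|^{-(d+1)}$. Together with Step 2 this gives
\[ W_2 \ll T^{-1}+T^{(d-1)/2}D_2. \]
Choosing $T=D_2^{-2/(d+1)}$ balances both terms at $D_2^{2/(d+1)}$. If $D_2$ is bounded below, so that this choice would give $T<1$, the bound is trivial because $W_2\le \mathrm{diam}(M)$.

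\emph{Sharpness.} On the torus take $\mu$ the empirical measure of the grid $n^{-1}\mathbb{Z}^d/\mathbb{Z}^d$ with $N=n^d$ points. Then $\hat\mu(k)=1$ for $k\in n\mathbb{Z}^d$ and $0$ otherwise. The upper bound $\int|\widehat{\mathds{1}_{B(0,r)}}(k)|^2dr\ll\|k\|^{-(d+1)}$ gives
\[ D_2^2\ll\sum_{m\ne0}\|nm\|^{-(d+1)}\asymp N^{-(d+1)/d}, \]
so $D_2^{2/(d+1)}\ll N^{-1/d}$. By \eqref{Wpbestrate}, $W_2\gg N^{-1/d}$, so any exponent larger than $2/(d+1)$ fails as $N\to\infty$.

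On $\mathbb{S}^d$ I would take spherical $t$-designs with $t\asymp N^{1/d}$ points, which exist by Bondarenko--Radchenko--Viazovska. All coefficients with $1\le\ell\le t$ vanish, so $D_2^2\ll\sum_{\ell>t}\ell^{-(d+1)}\sum_m|\hat\mu(\ell,m)|^2$. Using $\sum_m|\hat\mu(\ell,m)|^2\ll\ell^{d-1}$, this is $\ll t^{-2}\cdot t^{-(d-1)}$ up to constants, i.e.\ $D_2\ll N^{-1/2-1/(2d)}$. The lower bound $W_2\gg N^{-1/d}$ again comes from \eqref{Wpbestrate}, which completes the sharpness on the sphere.
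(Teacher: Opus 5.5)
Your proof of the inequality itself is sound and is essentially the paper's argument. You use the $p=2$ smoothing inequalities (Theorems \ref{torussmoothing_theorem1} and \ref{spheresmoothing_theorem}) and the formula $D_2^2\asymp\sum_{k\neq0}|\widehat{\mu}(k)|^2|k|^{-(d+1)}$, resp.\ $D_2^2\asymp\sum_{\ell}\ell^{-(d+1)}\sum_m|\widehat{\mu}(\ell,m)|^2$. You then apply $|k|^{-2}\le T^{d-1}|k|^{-(d+1)}$ and optimize $T$. Deriving the sphere coefficients via Funk--Hecke rather than quoting Skriganov's expansion amounts to the same thing. Your torus sharpness example is also correct. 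The grid $n^{-1}\mathbb{Z}^d/\mathbb{Z}^d$ has $\widehat{\mu}=\mathds{1}_{n\mathbb{Z}^d}$, hence $D_2\ll N^{-(d+1)/(2d)}$. This is a nice explicit substitute for the paper's route, which takes the $N$-point sets from Beck's bound \eqref{D2bestrate} and combines them with $W_2\gg N^{-1/d}$ from \eqref{Wpbestrate}.

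The gap is in the sphere sharpness. The bound $\sum_m|\widehat{\mu}(\ell,m)|^2\le d_\ell\ll\ell^{d-1}$ only yields
\[ D_2^2\ll\sum_{\ell>t}\ell^{-(d+1)}\ell^{d-1}=\sum_{\ell>t}\ell^{-2}\asymp t^{-1}\asymp N^{-1/d}, \]
not $t^{-(d+1)}$. So you only get $D_2\ll N^{-1/(2d)}$, which proves nothing. The estimate $\sum_m|\widehat{\mu}(\ell,m)|^2\le d_\ell$ is already satisfied by a single Dirac mass, and it misses the factor $1/N$ you need. What is required is, at least on dyadic average over $\ell>t$, $\sum_m|\widehat{\mu}(\ell,m)|^2\ll \ell^{d-1}/N$. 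Equivalently, $\sum_{t<\ell\le L}\sum_m|\widehat{\mu}(\ell,m)|^2\ll L^d/N$ for $L\ge t$.

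For $t$-designs with $N\asymp t^d$ this is true but not automatic. One route has two steps:
\begin{enumerate}
\item Prove the regularity $\mu(B(x,c/t))\ll N^{-1}$. This follows by integrating exactly a nonnegative polynomial of degree $\le t$ peaked at $x$, e.g.\ the square of the reproducing kernel of degree $\lfloor t/2\rfloor$.
\item Feed this into a large-sieve / Marcinkiewicz--Zygmund type inequality with a localized kernel, and sum dyadically.
\end{enumerate}
Alternatively, you can cite the theorem of Brauchart--Saff--Sloan--Womersley that such designs are QMC designs for $H^{(d+1)/2}$, which together with Stolarsky's invariance principle gives $D_2\ll N^{-(d+1)/(2d)}$. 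The simplest repair is the paper's: Beck's sets from \eqref{D2bestrate} together with \eqref{Wpbestrate} handle the torus and the sphere simultaneously.
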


In the $1$-dimensional torus $\mathbb{R}/\mathbb{Z}$, we actually have the identity $W_2(\mu, \mathrm{Vol})=2^{-1/2} D_2 (\mu, \mathrm{Vol})$ for all $\mu \in \mathcal{P}(\mathbb{R}/\mathbb{Z})$ \cite{GR}. No such identity holds in dimension $d \ge 2$.

The best possible rate in the $D_2$ metric in both the torus $M=\mathbb{R}^d / \mathbb{Z}^d$ and the sphere $M=\mathbb{S}^d$ is \cite{BE}
\begin{equation}\label{D2bestrate}
\inf_{a_1, a_2, \ldots, a_N \in M} D_2 (\mu_N, \mathrm{Vol}) \asymp N^{-(d+1)/(2d)} .
\end{equation}
Theorem \ref{W2D2_theorem} thus shows that if a finite point set $a_1, a_2, \ldots, a_N \in M$ has optimally small $L^2$ discrepancy with respect to balls $D_2 (\mu_N, \mathrm{Vol}) \ll N^{-(d+1)/(2d)}$, then it is also optimal in the quadratic Wasserstein metric $W_2 (\mu_N, \mathrm{Vol}) \ll N^{-1/d}$. This observation proves that the exponent $2/(d+1)$ in Theorem \ref{W2D2_theorem} is best possible.

As $\inf_{|\mathrm{supp}(\mu)| \le N} W_1 (\mu, \mathrm{Vol}) \gg N^{-1/d}$ is straightforward to prove using Kantorovich duality \eqref{kantorovich}, Theorem \ref{W2D2_theorem} also provides a new proof of the general Roth-type lower bound
\[ \inf_{\substack{\mu \in \mathcal{P}(M) \\ |\mathrm{supp} (\mu)| \le N}} D_2 (\mu, \mathrm{Vol}) \gg N^{-(d+1)/(2d)} . \]
In fact, this is slightly more general than the usual formulation \eqref{D2bestrate}, which applies only to empirical measures with equal weights.

The rest of the paper is organized as follows. Section \ref{results_section} gives all formal definitions and states further results. Section \ref{approach_section} is an overview of our general approach based on harmonic analysis. Section \ref{wassersteinL2discrepancy_section} contains the proof of Theorem \ref{W2D2_theorem}, Section \ref{torus_section} contains the proof of all main results in the torus, and Section \ref{sphere_section} contains the proof of Theorem \ref{fibonacci_theorem}.

\section{Notation and further results}\label{results_section}

\subsection{Notation}\label{notation_section}

Throughout, $\langle x,y \rangle=\sum_{i=1}^d x_i y_i$ and $|x|=(\sum_{i=1}^d x_i^2)^{1/2}$ denote the standard Euclidean scalar product and norm on $\mathbb{R}^d$. The notations $a_N \ll b_N$ and $a_N=O(b_N)$ both mean that there exists a constant $C>0$ such that $|a_N| \le C b_N$ for all $N \ge 1$. We write $a_N \asymp b_N$ if $a_N \ll b_N \ll a_N$.

Let $(M,\rho_M)$ be a compact metric space and let $\mathcal{P}(M)$ denote the set of Borel probability measures on $M$. The Wasserstein metric of order $1 \le p < \infty$ is defined as
\[ W_p (\mu, \nu) = \inf_{\pi \in \mathrm{Coupling}(\mu, \nu)} \left( \int_{M \times M} \rho_M (x,y)^p \, \mathrm{d}\pi (x,y) \right)^{1/p}, \qquad \mu, \nu \in \mathcal{P}(M), \]
where $\mathrm{Coupling}(\mu, \nu)$ is the set of $\pi \in \mathcal{P}(M \times M)$ with marginals $\pi(A \times M)=\mu(A)$ and $\pi(M \times A)=\nu (A)$, $A \subseteq M$ Borel. The Wasserstein metric of order $p=\infty$ is defined in terms of the $L^{\infty}(\pi)$ norm as
\[ W_{\infty}(\mu, \nu) = \inf_{\pi \in \mathrm{Coupling}(\mu, \nu)} \inf \{ K \ge 0 : \rho_M (x,y) \le K \,\, \pi-\textrm{a.e.} \} . \]
Given any $\mu, \nu \in \mathcal{P}(M)$, we have $W_p (\mu, \nu) \le W_q (\mu, \nu)$ for $1 \le p \le q \le \infty$ and $\lim_{p \to \infty}W_p(\mu, \nu)=W_{\infty}(\mu, \nu)$. In the special case $p=1$, the Kantorovich duality formula states
\begin{equation}\label{kantorovich}
W_1 (\mu, \nu) = \sup_{f \,\, 1\textrm{-Lipschitz}} \left| \int_M f \, \mathrm{d} \mu - \int_M f \, \mathrm{d} \nu \right|,
\end{equation}
where the supremum is over all $f: M \to \mathbb{R}$ such that $|f(x)-f(y)| \le \rho_M (x,y)$ for all $x,y \in M$. We refer to Villani \cite{VI1,VI2} for a comprehensive treatment of optimal transport and the Wasserstein metric.

In this paper, we work with two metric spaces: the flat torus $\mathbb{R}^d/\mathbb{Z}^d$ equipped with the metric $\rho_{\mathbb{R}^d/\mathbb{Z}^d}(x,y)=\| x-y \|_{\mathbb{R}^d/\mathbb{Z}^d}$, where $\| x \|_{\mathbb{R}^d/\mathbb{Z}^d} = \min_{k \in \mathbb{Z}^d} |x-k|$ denotes the Euclidean distance to the nearest lattice point, and the Euclidean unit sphere $\mathbb{S}^d = \{ x \in \mathbb{R}^{d+1} : |x|=1 \}$ equipped with the geodesic metric $\rho_{\mathbb{S}^d}(x,y)=\arccos (\langle x,y \rangle)$. In both spaces, $\mathrm{Vol}$ denotes the uniform probability measure. That is, $\mathrm{Vol}$ is the Lebesgue measure on $\mathbb{R}^d/\mathbb{Z}^d$ identified with the unit cube $[0,1)^d$, whereas $\mathrm{Vol}$ is the normalized surface area measure on $\mathbb{S}^d$.

Let $B(x,r)=\{ y \in M : \rho_M (x,y)<r \}$ denote the open ball with center $x$ and radius $r$. The $L^p$ discrepancy with respect to balls of fixed radius is defined in the torus as
\[ D_p (\mu, \nu, r) = \left( \int_{\mathbb{R}^d / \mathbb{Z}^d} |\mu(B(x,r)) - \nu (B(x,r))|^p \, \mathrm{d} \mathrm{Vol}(x) \right)^{1/p}, \quad \mu, \nu \in \mathcal{P}(\mathbb{R}^d / \mathbb{Z}^d), \,\, 0<r \le \frac{1}{2} . \]
The $L^p$ discrepancy with respect to balls in the torus is defined via a further averaging over all $r$ up to the injectivity radius $1/2$ as
\[ D_p (\mu, \nu) = \left( 2 \int_0^{1/2} \int_{\mathbb{R}^d / \mathbb{Z}^d} |\mu(B(x,r)) - \nu (B(x,r))|^p \, \mathrm{d} \mathrm{Vol}(x) \, \mathrm{d}r \right)^{1/p}, \quad \mu, \nu \in \mathcal{P}(\mathbb{R}^d / \mathbb{Z}^d). \]
The $L^p$ discrepancy with respect to balls on the sphere is defined similarly as
\[ \begin{split} D_p (\mu, \nu) &= \left( \frac{1}{2} \int_0^{\pi} \int_{\mathbb{S}^d} |\mu(B(x,r)) - \nu(B(x,r))|^p \sin r \, \mathrm{d}\mathrm{Vol}(x) \, \mathrm{d}r \right)^{1/p} \\ &= \left( \frac{1}{2} \int_{-1}^1 \int_{\mathbb{S}^d} |\mu(C(x,t)) - \nu(C(x,t))|^p \, \mathrm{d}\mathrm{Vol}(x) \, \mathrm{d}t \right)^{1/p}, \quad \mu, \nu \in \mathcal{P}(\mathbb{S}^d) , \end{split} \]
where $C(x,t) = \{ y \in \mathbb{S}^d : \langle x, y \rangle > t \}=B(x,\arccos t)$, $-1 \le t \le 1$, denotes the spherical cap centered at $x$ of height $1-t$. The $L^{\infty}$ discrepancy with respect to balls on both spaces is defined as
\[ D_{\infty} (\mu, \nu) =  \sup_{0<r \le r_0} \sup_{x \in M} |\mu (B(x,r)) - \nu (B(x,r))|, \]
where $r_0=1/2$ resp.\ $\pi$ in the torus resp.\ sphere. Clearly, $D_p (\mu, \nu) \le D_q (\mu, \nu)$ for all $1 \le p \le q \le \infty$.

\subsection{Kronecker sequences and higher rank lattices}\label{kronecker_section}

Given a vector $\alpha \in \mathbb{R}^d$, the Kronecker sequence is defined as $n \alpha + \mathbb{Z}^d$ in the torus $\mathbb{R}^d/\mathbb{Z}^d$. The rate of convergence of the empirical measure of the Kronecker sequence to the Lebesgue measure $\mathrm{Vol}$ on the torus is sensitive to the Diophantine approximation properties of $\alpha$. We work under the assumption
\[ I = \inf_{\substack{k \in \mathbb{Z}^d \\ k \neq 0}} \left( \psi (\| k \|_{\infty}) \| \langle k, \alpha \rangle \|_{\mathbb{R}/\mathbb{Z}} \right) >0, \]
where $\psi: [1,\infty) \to [1,\infty)$ is a nondecreasing function and $\| k \|_{\infty}=\max_{1 \le i \le d} |k_i|$.

More generally, let $\alpha_1, \alpha_2, \ldots, \alpha_m \in \mathbb{R}^d$ be a system of vectors and consider the rank $m$ lattice
\[ \left\{ n_1 \alpha_1 + n_2 \alpha_2 + \cdots + n_m \alpha_m + \mathbb{Z}^d : n_1, n_2, \ldots, n_m \in \mathbb{Z} \right\} \subset \mathbb{R}^d / \mathbb{Z}^d. \]
We work under the multiplicative assumption
\begin{equation}\label{diophantinelattice}
I = \inf_{\substack{k \in \mathbb{Z}^d \\ k \neq 0}} \bigg( \psi (\| k \|_{\infty}) \prod_{j=1}^m \| \langle k, \alpha_j \rangle \|_{\mathbb{R}/\mathbb{Z}} \bigg) >0.
\end{equation}
Let $F^{-1}(y)=\sup \{ x \ge 1 : F(x) \le y \}$ denote the generalized inverse function of a nondecreasing function $F$.

\begin{thm}\label{lattice_theorem} Let $d \ge 2$, $m \ge 1$ and assume the system $\alpha_1, \alpha_2, \ldots, \alpha_m \in \mathbb{R}^d$ satisfies \eqref{diophantinelattice} with a function $\psi: [1, \infty) \to [1, \infty)$ such that $x^{-d} \psi (x)$ is nondecreasing and there exists a constant $c>0$ for which $\psi(xy) \le c e^{x^2} \psi(y)$ holds for all $x,y \ge 1$. Let $N_1, N_2, \ldots, N_m \ge 2$ be integers, let $N=\prod_{j=1}^m N_j$ and let
\[ \mu_N= \frac{1}{N} \sum_{n_1=1}^{N_1} \sum_{n_2=1}^{N_2} \cdots \sum_{n_m=1}^{N_m} \delta_{n_1 \alpha_1+ n_2 \alpha_2 + \cdots + n_m \alpha_m+\mathbb{Z}^d}. \]
Let $F_p: [1,\infty) \to [1,\infty)$, $F_p(x)=(\psi(x)+p x^d) (\log (\psi(x)+2))^{(m-1)(p-1)/p}$. For any $2 \le p < \infty$, we have
\[ W_p (\mu_N, \mathrm{Vol}) \ll \frac{p^{1/2}}{F_p^{-1} \left( \frac{p^{1/2} N}{\min \left\{ p, \log \left( \frac{N}{\psi^{-1}(N)^d}+2 \right) \right\}} \right)} \]
and
\[ W_{\infty} (\mu_N, \mathrm{Vol}) \ll \frac{(\log N)^{1/2}}{F_{\log N}^{-1} \left( \frac{(\log N)^{1/2} N}{\min \left\{ \log N, \log \left( \frac{N}{\psi^{-1}(N)^d}+2 \right) \right\}} \right)} \]
with implied constants that depend only on $I$, $c$, $d$ and $m$.
\end{thm}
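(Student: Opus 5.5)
We plan to deduce the result from the smoothing inequality of the first author and Cuenin \cite{BC} in the torus. For $2 \le p<\infty$ and any $H \ge 1$ it gives, up to a constant depending only on $d$,
\[ W_p(\mu,\mathrm{Vol}) \ll \frac{p^{1/2}}{H} + \Big\| \sum_{0<\|k\|_\infty \le H} \hat{\mu}(k)\, w_H(k)\, |k|^{-1} e^{2\pi i \langle k,x\rangle} \Big\|_{L^p(\mathrm{Vol})}, \]
where $w_H$ is a Gaussian-type cutoff such as $e^{-|k|^2/H^2}$. Here the $p^{1/2}/H$ term is the cost of replacing $\mu$ by its heat-kernel smoothing at scale $H^{-1}\sqrt{p}$. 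For the $W_\infty$ estimate we will take $p=\log N$, since $W_\infty \ll W_{\log N}$ up to a constant when the smoothing is at scale $\gg N^{-1}$.

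For the lattice the Fourier coefficients factor as $\hat{\mu}_N(k)=\prod_{j=1}^m D_{N_j}(\langle k,\alpha_j\rangle)$, where
\[ |D_{N_j}(t)| \le \min\{1, (N_j \|t\|_{\mathbb{R}/\mathbb{Z}})^{-1}\}. \]
To control the $L^p$ norm we interpolate between $L^2$ and $L^\infty$:
\[ \|g\|_p \le \|g\|_2^{2/p}\|g\|_\infty^{1-2/p}. \]
By Parseval, $\|g\|_2^2=\sum |\hat\mu_N(k)|^2 w_H(k)^2|k|^{-2}$, and $\|g\|_\infty \le \sum |\hat\mu_N(k)| w_H(k)|k|^{-1}$.

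Both sums are bounded by splitting $k$ into dyadic shells $\|k\|_\infty \sim 2^\ell$ and counting lattice points with prescribed sizes of $\|\langle k,\alpha_j\rangle\|$. By \eqref{diophantinelattice}, for $k \ne k'$ in a shell,
\[ \prod_j \|\langle k-k',\alpha_j\rangle\| \ge I/\psi(2^{\ell+1}). \]
So the points $(\langle k,\alpha_j\rangle)_j \in (\mathbb{R}/\mathbb{Z})^m$ are separated in the multiplicative (hyperbolic) sense. A standard box-counting over dyadic boxes $\prod_j [2^{-a_j},2^{-a_j+1}]$ then gives at most $\ll 1+ 2^{-\sum a_j}\psi(2^{\ell})$ points per box. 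Summing $\prod_j \min\{1,(N_j 2^{-a_j})^{-1}\}^q$ over $a \in \mathbb{N}^m$ produces a factor $\ll (\psi(2^\ell)+2^{\ell d})/N$, times the logarithmic factor $(\log(\psi+2))^{m-1}$ coming from the number of boxes on the hyperbolic boundary. The exponent differs: $q=2$ in the $L^2$ sum and $q=1$ in the $L^\infty$ sum, which carries the extra log.

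Combining through the interpolation exponents produces the factor $(\log(\psi(H)+2))^{(m-1)(p-1)/p}$. The assumption that $x^{-d}\psi(x)$ is nondecreasing lets us sum the dyadic shells geometrically up to $H$, with the largest shell dominating. The $px^d$ term in $F_p$ comes from the trivial contribution of $\ll H^d$ frequencies in $\|g\|_\infty$, weighted by the $p$-dependence of the interpolation.

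This should yield
\[ W_p \ll \frac{p^{1/2}}{H} + \Big(\frac{F_p(H)}{N}\cdot\text{(log factor)}\Big)^{\text{power}} H^{-1}. \]
We then choose $H$ so that the second term balances the first, which means $H=F_p^{-1}\big(p^{1/2}N/\min\{\cdots\}\big)$.

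The growth condition $\psi(xy)\le ce^{x^2}\psi(y)$ handles the Gaussian tail $\|k\|>H$: the weight $e^{-|k|^2/H^2}$ beats the growth of $\psi(|k|)$ relative to $\psi(H)$. The $\min\{p,\log(N/\psi^{-1}(N)^d+2)\}$ term arises from optimizing the interpolation more carefully. When $N/\psi^{-1}(N)^d$ is small, the range of useful frequencies is short, and a H\"older exponent smaller than $p$ is better. So we will redo the interpolation with an exponent $r=\min\{p,\log(\cdot)\}$, using $\|g\|_p \le \|g\|_\infty^{1-r/p}\|g\|_r^{r/p}$ together with hypercontractivity-type bounds.

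The main obstacle is exactly this bookkeeping: getting the precise combination of $\psi(x)+px^d$ and the log power $(m-1)(p-1)/p$ inside $F_p$, and the $\min$ in the argument. We expect to first establish the cleaner bound with $\min\{\cdot\}$ replaced by $1$, and then refine the interpolation step to gain the $\min$ factor.
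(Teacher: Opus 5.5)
There are genuine gaps. The main one is the origin of the factor $\min\{p,\log(N/\psi^{-1}(N)^d+2)\}$.

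\textbf{The $\min$ factor.} This quantity is always $>1$: since $x^{-d}\psi(x)$ is nondecreasing we have $\psi(x)\ge x^d$, hence $\psi^{-1}(N)\le N^{1/d}$. So it is a \emph{loss}, not something to gain. The ``cleaner'' bound with the $\min$ replaced by $1$ would be stronger than the theorem, and your plan of first proving that and then ``refining'' has the direction reversed.

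The loss comes from the smoothing inequality itself. The version you quote has no factor in front of the Fourier term. The inequalities actually available (Theorems \ref{torussmoothing_theorem1} and \ref{torussmoothing_theorem2}) carry the factor $p$, respectively $p(1-\delta^{1/p})/(1-\delta)$, where $\delta$ is a lower bound for the density of the heat-smoothed measure, derived from $\mu(B(x,r))\ge b$.

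The missing idea is this density lower bound. Cassels' quantitative Kronecker theorem (Lemma \ref{cassels_lemma}) shows that every cube of side $\asymp 1/\psi^{-1}(N)$ contains a point of the lattice, so $\mu_N(B(x,r))\gg N^{-1}(r\psi^{-1}(N))^d$. Taking $r=t^{1/2}$ gives $\delta\gg \psi^{-1}(N)^d/N$, and then $p(1-\delta^{1/p})/(1-\delta)\le\min\{p,\log(1/\delta)/(1-\delta)\}$. This is the only way $\psi^{-1}(N)$ enters, and no H\"older or hypercontractivity manipulation of $\|g\|_p$ can produce it. It matters: for badly approximable $\alpha$ it turns the prefactor $p$ into $O(1)$. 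With prefactor $p$, the same computation gives only $p^{1/2+3/(2d)}N^{-1/d}$ instead of $p^{(d+1)/(2d)}N^{-1/d}$.

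\textbf{The interpolation step.} Your $L^2$--$L^\infty$ interpolation, with $\|g\|_\infty$ bounded by the $\ell^1$ norm of the coefficients, loses a logarithm. Take already $m=1$, $\psi(x)=x^d$. Then $\sum_{0<\|k\|_\infty\le Q}\|\langle k,\alpha\rangle\|_{\mathbb{R}/\mathbb{Z}}^{-1}$ is of order $Q^d\log Q$, so $\|g\|_\infty\ll H^{d-1}(\log H)/N$ while $\|g\|_2\ll H^{d-1}/N$. Hence $\|g\|_p\ll H^{d-1}(\log H)^{1-2/p}/N$. Optimizing in $H$ then leaves an extra $(\log N)^{(1-2/p)/d}$ for every fixed $p>2$, so you do not get $W_p\ll_p N^{-1/d}$.

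The interpolation inequality also has no $p$-dependent constant, so it cannot be the source of the term $px^d$ in $F_p$. In the paper that term comes from the Hausdorff--Young form of the smoothing inequality, i.e.\ the $\ell^q$ sum with $q=p/(p-1)$. There the dyadic sum $\sum_n 2^{(q-1)n}(\cdots)$ is geometric with constant $1/(2^{q-1}-1)\asymp p$, which takes the place of your $\log H$.

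\textbf{The counting step.} Even inside that $\ell^q$ framework, your count from multiplicative separation is too weak. It gives $\ll\varepsilon\psi(Q)$ (times logarithms) lattice points with $\prod_j\|\langle k,\alpha_j\rangle\|_{\mathbb{R}/\mathbb{Z}}\approx\varepsilon$, which only yields $p\,\psi(x)$ where the theorem has $\psi(x)+px^d$. To separate the two terms you need a sharper count such as Fregoli's bound $\big((\varepsilon\psi(Q))^{(d+m-1)/(d+m)}+\varepsilon Q^d\big)(\log(1/\varepsilon+2))^{m-1}$. Its first term has exponent $<1$, so its contribution to $\sum\prod_j\|\langle k,\alpha_j\rangle\|^{-q}$ sums without a factor $p$.
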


Dirichlet's approximation theorem \cite[p.\ 13]{CA} immediately shows that if $\psi(x)=o(x^d)$, then there does not exist a system of vectors $\alpha_1, \alpha_2, \ldots, \alpha_m \in \mathbb{R}^d$ that satisfies the multiplicative assumption \eqref{diophantinelattice}. Let us call a system of vectors $\alpha_1, \alpha_2, \ldots, \alpha_m \in \mathbb{R}^d$ badly approximable in the sense of \eqref{diophantinelattice} if they satisfy \eqref{diophantinelattice} with $\psi(x)=x^d$. If $m=1$, then this is the usual notion of badly approximable vectors $\alpha \in \mathbb{R}^d$, which are known to exist for any $d \ge 1$ \cite[p.\ 15]{CA}. In this case $F_p(x) \asymp p x^d$, $F_p^{-1}(y) \asymp p ^{-1/d} y^{1/d}$ and Theorem \ref{lattice_theorem} yields $W_p(\mu_N, \mathrm{Vol}) \ll p^{(d+1)/(2d)} N^{-1/d}$ and $W_{\infty}(\mu_N, \mathrm{Vol}) \ll (\log N)^{(d+1)/(2d)} N^{-1/d}$. This proves Theorem \ref{kroneckerbadlyapprox_theorem}. In contrast, whether a badly approximable system of vectors in the sense of \eqref{diophantinelattice} with $m \ge 2$ exists is an open problem; in fact, Littlewood's conjecture states that such a system does not exist for $d=1$, $m=2$. Note that in the case $d, m \ge 2$, badly approximable in the sense of \eqref{diophantinelattice} is somewhere between the usual notion of a badly approximable system, and the notion of a multiplicatively badly approximable system as introduced by Bugeaud \cite{BU}.

Considering a system of vectors $\alpha_1, \alpha_2, \ldots, \alpha_m \in \mathbb{R}^d$ as an element of $\mathbb{R}^{dm}$, the corresponding metric result states that a.e.\ system satisfies \eqref{diophantinelattice} with $\psi(x)=x^d (\log x)^m (\log \log x)^{1+\varepsilon}$, $\varepsilon >0$. Since the condition \eqref{diophantinelattice} is not classical, we give a short proof of this fact in Lemma \ref{metriclemma} below. In this case $F_p^{-1}(y) \asymp y^{1/d} (\log y)^{-m/d-(m-1)(p-1)/(pd)} (\log \log y)^{-(1+\varepsilon)/d}$ for $2 \le p \ll \log y$, and we immediately deduce the following.

\begin{cor}\label{lattice_corollary} Let $d \ge 2$, $m \ge 1$, $2 \le p < \infty$ and $\varepsilon>0$. Let $N_1, N_2, \ldots, N_m \ge 2$ be integers, let $N=N_1 N_2 \cdots N_m$ and let
\[ \mu_N= \frac{1}{N} \sum_{n_1=1}^{N_1} \sum_{n_2=1}^{N_2} \cdots \sum_{n_m=1}^{N_m} \delta_{n_1 \alpha_1+ n_2 \alpha_2 + \cdots + n_m \alpha_m+\mathbb{Z}^d}. \]
For a.e.\ $\alpha_1, \alpha_2, \ldots, \alpha_m \in \mathbb{R}^d$, we have
\[ \begin{split} W_p (\mu_N, \mathrm{Vol}) &\ll (\log N)^{m/d+(m-1)(p-1)/(pd)} (\log \log N)^{1/d+\varepsilon} N^{-1/d}, \\ W_{\infty} (\mu_N, \mathrm{Vol}) &\ll (\log N)^{(d+4m-3)/(2d)} (\log \log N)^{2/d+\varepsilon} N^{-1/d} \end{split} \]
with implied constants that depend only on $\alpha_1, \alpha_2, \ldots, \alpha_m$, $d$, $m$, $p$ and $\varepsilon$.
\end{cor}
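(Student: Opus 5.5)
The corollary is a specialization of Theorem \ref{lattice_theorem} to $\psi(x)=x^d(\log x)^m(\log\log x)^{1+\varepsilon}$, suitably modified for small $x$ so that $\psi\ge 1$ on $[1,\infty)$; for instance $\psi(x)=x^d(\log(x+e))^m(\log\log(x+e^e))^{1+\varepsilon}$. The argument has three steps.

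\textbf{Step 1: the Diophantine condition holds almost surely.} By the metric fact announced before the statement (Lemma \ref{metriclemma}), a.e.\ system $\alpha_1,\dots,\alpha_m$ satisfies \eqref{diophantinelattice} with this $\psi$. I take that lemma as given; if it had to be proved, I would use a Borel--Cantelli argument. For fixed $k\neq 0$, the measure of the set where $\prod_j\|\langle k,\alpha_j\rangle\|<\delta$ is $\asymp \delta(\log(1/\delta))^{m-1}$. Summing this over $k$ with $\delta=1/\psi(\|k\|_\infty)$ gives a convergent series. This shows the inequality holds with $I>0$ except for finitely many $k$, and those finitely many $k$ have $\prod_j\|\langle k,\alpha_j\rangle\|>0$ almost surely.

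\textbf{Step 2: hypotheses of Theorem \ref{lattice_theorem}.} The function $x^{-d}\psi(x)$ is a product of nondecreasing logarithmic factors, so it is nondecreasing. For the growth condition, $\psi(xy)/\psi(y)\le x^d\,(\log(xy+e)/\log(y+e))^m\cdots\ll x^d(1+\log x)^{m+2}\le c\,e^{x^2}$. This holds because $\log(xy+e)\le \log(x+e)+\log(y+e)$ and the ratio is therefore at most $1+\log(x+e)$.

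\textbf{Step 3: evaluate the bound.} Now compute the right-hand side of Theorem \ref{lattice_theorem}. Since $p\ge 2$ is fixed, $F_p(x)\asymp x^d(\log x)^{m+(m-1)(p-1)/p}(\log\log x)^{1+\varepsilon}$, with implied constants depending on $p$. Inverting gives $F_p^{-1}(y)\asymp y^{1/d}(\log y)^{-m/d-(m-1)(p-1)/(pd)}(\log\log y)^{-(1+\varepsilon)/d}$.

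Next, $\psi^{-1}(N)\asymp N^{1/d}(\log N)^{-m/d}\cdots$, so $N/\psi^{-1}(N)^d\asymp (\log N)^m(\log\log N)^{1+\varepsilon}$. Hence the minimum in the argument is $\asymp \min\{p,\log\log N\}$, which is bounded below by a constant. The argument of $F_p^{-1}$ is therefore between $\gg N/\log\log N$ and $\ll N$. Since $\log y\asymp\log N$ on this range, inserting it gives
\[ W_p\ll \left(\frac{\log\log N}{N}\right)^{1/d}(\log N)^{m/d+(m-1)(p-1)/(pd)}(\log\log N)^{(1+\varepsilon)/d}. \]
The extra $(\log\log N)^{1/d}$ combines with $(\log\log N)^{(1+\varepsilon)/d}$ to give $(\log\log N)^{2/d+\varepsilon/d}$. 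That is a factor $(\log\log N)^{1/d}$ worse than claimed.

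So the main obstacle is this small loss. It should disappear by using the case $p\le\log\log N$ of the minimum: for fixed $p$ and large $N$, the minimum equals $p$, so the argument is $\asymp N$ and the loss vanishes. The finitely many small $N$ are absorbed into the constant. After renaming $\varepsilon$, this gives the first estimate.

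\textbf{The case $W_\infty$.} Take $p=\log N$. Then $F_{\log N}(x)\asymp x^d\log N\,(\log x)^{2m-1}(\log\log x)^{1+\varepsilon}$, taking the $x^d(\log x)^m$ term against $px^d$ appropriately. The minimum in the argument is now $\min\{\log N,\log\log N\}\asymp\log\log N$, so the argument is $\asymp(\log N)^{1/2}N/\log\log N$. Inverting yields
\[ F^{-1}_{\log N}\asymp \left(\frac{N}{(\log N)^{1/2}\log\log N}\right)^{1/d}(\log N)^{-(2m-1)/d}(\log\log N)^{-(1+\varepsilon)/d}. \]
Multiplying by the prefactor $(\log N)^{1/2}$ gives exponent $1/2+(2m-1)/d+1/(2d)=(d+4m-1)/(2d)$. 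The claimed exponent is $(d+4m-3)/(2d)$, so the difference of $1/d$ has to come from a more careful comparison of $\psi(x)$ with $px^d$ in $F_p$. I would redo this bookkeeping carefully at that point. The $\log\log$ exponent $2/d+\varepsilon$ matches the factor computed above.
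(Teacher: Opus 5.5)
Your overall route is the paper's: Lemma \ref{metriclemma} with $\psi(x)=x^d(\log x)^m(\log\log x)^{1+\varepsilon}$, regularized for small $x$, gives the a.e.\ Diophantine condition, and the corollary is then read off from Theorem \ref{lattice_theorem}. Steps 1 and 2 and the $W_p$ bound are fine. In fact there is never a $(\log\log N)^{1/d}$ loss in the $W_p$ case. Since $\min\{p,\cdot\}\le p$, the argument of $F_p^{-1}$ is at least $N/\sqrt{p}$ for every $N$. Also $(\log\log N)^{(1+\varepsilon)/d}\le(\log\log N)^{1/d+\varepsilon}$, so no renaming of $\varepsilon$ is needed.

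The gap is the $W_\infty$ bound, which your write-up does not establish. Your computation gives exponent $(d+4m-1)/(2d)$, and you leave the discrepancy of $1/d$ unresolved. The error is the extra factor $\log N$ in your formula for $F_{\log N}$. In $F_p(x)=(\psi(x)+px^d)(\log(\psi(x)+2))^{(m-1)(p-1)/p}$ the first factor is a \emph{sum}. At the relevant scale $x=N^{1/d+o(1)}$ you have $\psi(x)/x^d=(\log x)^m(\log\log x)^{1+\varepsilon}\ge\log N$, so $\psi(x)+(\log N)x^d\le 2\psi(x)$, and $px^d$ contributes no extra factor $\log N$. Moreover $(\log(\psi(x)+2))^{(m-1)(1-1/\log N)}\asymp(\log N)^{m-1}$, because $(\log N)^{(m-1)/\log N}$ is bounded. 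Hence $F_{\log N}(x)\asymp x^d(\log x)^{2m-1}(\log\log x)^{1+\varepsilon}$ there. This is the uniform asymptotic $F_p^{-1}(y)\asymp y^{1/d}(\log y)^{-m/d-(m-1)(p-1)/(pd)}(\log\log y)^{-(1+\varepsilon)/d}$, valid for $2\le p\ll\log y$. Taking $y\asymp(\log N)^{1/2}N/\log\log N$, so that $\log y\asymp\log N$, gives
\[ W_\infty(\mu_N,\mathrm{Vol})\ll (\log N)^{1/2}\cdot\frac{(\log N)^{(2m-1)/d}(\log\log N)^{(1+\varepsilon)/d}}{\bigl((\log N)^{1/2}N/\log\log N\bigr)^{1/d}} = (\log N)^{\frac{d+4m-3}{2d}}(\log\log N)^{\frac{2+\varepsilon}{d}}N^{-1/d}. \]
The exponent follows from $1/2-1/(2d)+(2m-1)/d=(d+4m-3)/(2d)$, which is exactly the claimed estimate. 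Your version of $F^{-1}_{\log N}$ carried the factor $(\log N)^{-1/(2d)}$ instead of $(\log N)^{+1/(2d)}$, and that sign is the source of the spurious $1/d$.
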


The estimates in Corollary \ref{lattice_corollary} are within logarithmic factors from the best possible rate $N^{-1/d}$ in \eqref{Wpbestrate}. In comparison, the best known estimate for the $L^{\infty}$ discrepancy with respect to balls of the Kronecker sequence $n \alpha + \mathbb{Z}^d$ is $D_{\infty}(\mu_N, \mathrm{Vol}) \ll (\log N)^{2+\varepsilon} N^{-2/(d+1)}$ for a.e.\ $\alpha \in \mathbb{R}^d$, but it is not known whether the exponent $-2/(d+1)$ is optimal \cite{HA}.

\subsection{Modular hyperbolas}\label{modularhyperbola_section}

Given a positive integer $q$ and an integer $a$ coprime with $q$, consider the set
\[ S_{a,q,d} = \left\{ \left( \frac{x_1}{q}, \frac{x_2}{q}, \ldots, \frac{x_d}{q} \right) + \mathbb{Z}^d : x_1, x_2, \ldots, x_d \in (\mathbb{Z}/q\mathbb{Z})^{\times}, x_1 x_2 \cdots x_d \equiv a \pmod{q} \right\} \subset \mathbb{R}^d / \mathbb{Z}^d \]
of cardinality $N=\varphi(q)^{d-1}$. In dimension $d=2$, the set
\[ S_{a,q,2}= \left\{ \left( \frac{x}{q}, \frac{a\overline{x}}{q} \right) + \mathbb{Z}^2 : x \in (\mathbb{Z}/q\mathbb{Z})^{\times} \right\} \subset \mathbb{R}^2 / \mathbb{Z}^2 \]
is known as the modular hyperbola. Here $\varphi(q)$ is the Euler totient function, $(\mathbb{Z}/q\mathbb{Z})^{\times}$ is the set of modulo $q$ remainders that are coprime with $q$ and $\overline{x}$ denotes the multiplicative inverse of $x$ modulo $q$. We refer to Shparlinski \cite{SH} for a general survey and to \cite{BRS,HU2} for a detailed study of equidistribution properties of modular hyperbolas in dimension $d=2$.

The Fourier transform of the empirical measure of the set $S_{a,q,d}$ is a Kloosterman sum. A smoothing inequality discussed in Section \ref{smoothing_section} together with the Weil bound for Kloosterman sums will immediately show that the same rate is attained in the quadratic Wasserstein metric as for i.i.d.\ random points \eqref{iidrate}. As for the $L^2$ discrepancy with respect to balls, the same rate as i.i.d.\ random points is attained only in dimensions $d=2$ and $3$, but not in dimension $d \ge 4$. For the sake of simplicity, we only consider prime $q$.

\begin{thm}\label{modularhyperbola_theorem} Let $q$ be a prime, let $a \in (\mathbb{Z}/q\mathbb{Z})^{\times}$ and let $\mu_{a,q,d}= (q-1)^{-(d-1)} \sum_{s \in S_{a,q,d}} \delta_s$ denote the empirical measure of the set $S_{a,q,d}$. If $d=2$, then
\begin{equation}\label{W2estimated=2}
W_2 (\mu_{a,q,2}, \mathrm{Vol}) \ll (\log q)^{1/2} q^{-1/2}
\end{equation}
with a universal implied constant. If $d \ge 3$, then
\[ W_2(\mu_{a,q,d}, \mathrm{Vol}) \asymp q^{-(d-1)/d} \quad \textrm{and} \quad D_2 (\mu_{a,q,d}, \mathrm{Vol}) \asymp q^{-1} \]
with implied constants that depend only on $d$.
\end{thm}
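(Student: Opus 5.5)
We work on the Fourier side throughout. Write $e(x)=e^{2\pi i x}$. For $k \in \mathbb{Z}^d$ the Fourier coefficient of the empirical measure is
\[ \widehat{\mu}(k)=(q-1)^{-(d-1)} \sum_{x_1 \cdots x_d \equiv a} e\Big( \frac{k_1 x_1+\cdots+k_d x_d}{q} \Big). \]
This is a normalized hyper-Kloosterman sum $Kl_d$. It depends only on $k \bmod q$, so $\widehat{\mu}(k)=1$ whenever $k \in q\mathbb{Z}^d$.

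\emph{Step 1: computing the coefficients.} Let $J=\{i : q \nmid k_i\}$.
\begin{itemize}
\item If $J=\emptyset$ and $k\neq 0$, then $\widehat{\mu}(k)=1$.
\item If $J$ is a nonempty proper subset, eliminate a variable $x_j$ with $q \mid k_j$. This leaves free sums over the remaining coordinates, which factor into products of Ramanujan sums $\sum_{x\in(\mathbb{Z}/q)^\times} e(k_ix/q)=-1$. Hence $|\widehat{\mu}(k)| = (q-1)^{-(d-1)}$.
\item If $J=\{1,\dots,d\}$, Deligne's bound gives $|\widehat{\mu}(k)| \le d\, q^{(d-1)/2}(q-1)^{-(d-1)} \ll q^{-(d-1)/2}$. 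For $d=2$ this is the Weil bound, $|\widehat{\mu}(k)|\le 2q^{1/2}/(q-1)$.
\end{itemize}

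\emph{Step 2: the upper bounds for $W_2$.} We use the smoothing inequality of Section \ref{smoothing_section}, in the form $W_2(\mu,\mathrm{Vol})^2 \ll \sum_{0<|k|\le K} |\widehat{\mu}(k)|^2/|k|^2 + K^{-2}$ with $K \asymp q$, so that the terms with $k\in q\mathbb{Z}^d$ can be absorbed or chosen just outside the range.
\begin{itemize}
\item For $d=2$: the generic $k$ contribute $q^{-1}\sum_{|k|\le q}|k|^{-2} \ll q^{-1}\log q$. The $k$ with exactly one coordinate divisible by $q$ contribute $q^{-2}\sum |k|^{-2}\ll q^{-2}\log q$. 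Taking $K=q$ gives $(\log q)q^{-1}$, which is \eqref{W2estimated=2}.
\item For $d\ge 3$: the generic terms give $q^{-(d-1)}\sum_{|k|\le q}|k|^{-2}\ll q^{-(d-1)}q^{d-2}=q^{-1}$. This is smaller than the target $q^{-2(d-1)/d}$ only when $d\le 2$, so it is too weak. We should not truncate at $q$ but at $K\asymp q^{(d-1)/d}=N^{1/d}$. Then the generic terms give $q^{-(d-1)}K^{d-2}=q^{-(d-1)}q^{(d-1)(d-2)/d}=q^{-2(d-1)/d}$, the tail $K^{-2}$ is of the same order, and since $K<q$ no multiples of $q$ appear. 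Hence $W_2\ll q^{-(d-1)/d}$.
\end{itemize}

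\emph{Step 3: the lower bound for $W_2$.} This follows from \eqref{Wpweightedbestrate}/\eqref{Wpbestrate} with $N=(q-1)^{d-1}$, which give $W_2\ge W_1\gg N^{-1/d}\asymp q^{-(d-1)/d}$.

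\emph{Step 4: $D_2$.} Write $\chi_r$ for the indicator of the ball of radius $r$ centred at the origin. We use the Parseval formula
\[ D_2^2 = 2\int_0^{1/2}\sum_{k\ne0}|\widehat{\mu}(k)|^2|\widehat{\chi_r}(k)|^2\,\mathrm{d}r, \]
together with the standard fact that $\int_0^{1/2}|\widehat{\chi_r}(k)|^2\,\mathrm{d}r\asymp |k|^{-(d+1)}$, with the lower bound valid for all $k\neq0$.
\begin{itemize}
\item Upper bound: the generic terms give $q^{-(d-1)}\sum_{k}|k|^{-(d+1)}\ll q^{-(d-1)}$. The lattice terms $k\in q\mathbb{Z}^d$ give $\sum_{m\ne0}(q|m|)^{-(d+1)}\asymp q^{-(d+1)}$. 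The mixed terms are even smaller. Thus $D_2^2\ll q^{-(d-1)}+q^{-(d+1)}$, which gives $D_2 \ll q^{-(d-1)/2}$.
\item The claim $D_2\asymp q^{-1}$ therefore requires the generic contribution to be $\ll q^{-2}$, and a square-root bound $|Kl_d|^2\ll q^{d-1}$ cannot deliver that for $d\ge 4$. I would instead use the exact second moment: averaging $|Kl_d(k)|^2$ over $k$ in boxes gives about $q^{d-1}$ on average, which still yields $q^{-(d-1)}$. Hence, unless I misread the normalization, the upper bound seems to be $q^{-(d-1)/2}$ rather than $q^{-1}$. This needs checking against the averaged lower bound.
\item Lower bound: use only the $k\in q\mathbb{Z}^d$ terms, each with $|\widehat{\mu}|=1$, which gives $D_2\gg q^{-(d+1)/2}$. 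A better lower bound would come from the generic terms, using that the mean of $|Kl_d|^2$ over $k$ is $\gg q^{d-1}$; by orthogonality this mean is exactly $q^{d}N^{-1}$ minus the degenerate contributions.
\end{itemize}

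The main obstacle is reconciling the exponent in the $D_2$ claim: obtaining matching upper and lower bounds from the averaged second moment of hyper-Kloosterman sums against the weight $|k|^{-(d+1)}$, which concentrates on small $k$, where individual Kloosterman sums are not averaged. For small $k$ I would use $\sum_{|k|\le R}|\widehat{\mu}(k)|^2$ computed exactly via the point-pair counting $\#\{(x,y)\in S^2 : x-y \text{ small}\}$.
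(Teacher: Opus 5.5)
There is a genuine gap, and it starts in the second bullet of your Step 1. Suppose $1\le j\le d-1$ coordinates of $k$ are divisible by $q$, and you eliminate one such variable. Only the $d-j$ remaining variables with $q\nmid k_i$ produce a Ramanujan sum equal to $-1$. Each of the other $j-1$ variables with $q\mid k_i$ is summed freely and contributes a factor $q-1$. Hence $\widehat{\mu}(k)=(-1)^{d-j}(q-1)^{-(d-j)}$, that is, $|\widehat{\mu}(k)|=(q-1)^{-|J|}$, not $(q-1)^{-(d-1)}$. In particular, $\widehat{\mu}(\ell,0,\ldots,0)=-1/(q-1)$ whenever $q\nmid\ell$. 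This just reflects the fact that every one-dimensional marginal of $\mu_{a,q,d}$ is uniform on $\{x/q : x\in(\mathbb{Z}/q\mathbb{Z})^{\times}\}$ and misses the residue $0$. For $d=2$ your formula coincides with the correct one, so your $d=2$ bound is fine. Your Step 3 is also fine; the paper likewise takes the lower bound $W_2\gg q^{-(d-1)/d}$ from the general $N^{-1/d}$ lower bound.

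The $|J|=1$ terms are precisely the source of $D_2\asymp q^{-1}$, and this is why your Step 4 breaks down.

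\emph{Upper bound.} With the correct values, every $k\notin q\mathbb{Z}^d$ satisfies $|\widehat{\mu}(k)|\ll\max\{q^{-1},q^{-(d-1)/2}\}=q^{-1}$ for $d\ge3$. Parseval then gives $D_2^2\ll q^{-2}\sum_{k\neq0}|k|^{-(d+1)}+q^{-(d+1)}\ll q^{-2}$.

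\emph{Lower bound.} Keep only $k=(\ell,0,\ldots,0)$ with $q\nmid\ell$, where $b_k\asymp|\ell|^{-(d+1)}$. This gives $D_2^2\ge(q-1)^{-2}\sum_{q\nmid\ell}b_{(\ell,0,\ldots,0)}\gg q^{-2}$.

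So your upper bound $D_2\ll q^{-(d-1)/2}$ is false for $d\ge4$, and the statement is correct as written. No second-moment information on hyper-Kloosterman sums is needed. Indeed, the generic terms total only $\ll q^{-(d-1)}=o(q^{-2})$ for $d\ge4$, so they could never supply the lower bound. As it stands, your proposal proves neither half of $D_2\asymp q^{-1}$.

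The same oversight affects Step 2 for $d\ge3$. The region $0<|k|\le K$ with $K<q$ does contain vectors with zero coordinates; these are of mixed type, not generic. For $d\ge4$ their coefficients are larger than the Deligne bound $q^{-(d-1)/2}$, so they cannot be lumped in with the generic terms. They contribute $\ll\sum_{s=1}^{d-1}q^{-2s}\sum_{\ell\in\mathbb{Z}^s,\,0<|\ell|\le K}|\ell|^{-2}\ll q^{-2}\le q^{-2(d-1)/d}$. Your $W_2$ bound therefore survives, but this contribution has to be checked rather than asserted.
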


It would be interesting to see whether the estimate \eqref{W2estimated=2} can be refined to an asymptotic relation $W_2 (\mu_{a,q,2}, \mathrm{Vol}) =(c+o(1)) (\log q)^{1/2} q^{-1/2}$ with some constant $c$. While this remains open, we show that \eqref{W2estimated=2} is sharp on average.

\begin{thm}\label{modularhyperbolaW2_theorem} Let $q$ be a prime. Then
\[ \frac{1}{q-1} \sum_{a \in (\mathbb{Z}/q\mathbb{Z})^{\times}} W_2 (\mu_{a,q,2}, \mathrm{Vol})^2 \asymp \frac{\log q}{q} \]
with a universal implied constant.
\end{thm}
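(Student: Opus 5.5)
The upper bound follows from \eqref{W2estimated=2} after squaring, since that estimate holds uniformly in $a$. All the work is in the lower bound
\[ \frac{1}{q-1}\sum_a W_2(\mu_{a,q,2},\mathrm{Vol})^2 \gg \frac{\log q}{q}. \]
To get it I will pass from $W_2$ to a quantity that is quadratic in the Fourier coefficients, and then average over $a$ using orthogonality of Kloosterman sums.

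\textbf{Reduction to Fourier coefficients.} By Kantorovich duality \eqref{kantorovich}, $W_2 \ge W_1 \ge |\int f\,\mathrm{d}\mu - \int f\,\mathrm{d}\mathrm{Vol}|$ for every $1$-Lipschitz $f$. The Fourier coefficients are
\[ \widehat{\mu}_{a}(k) = \frac{1}{q-1}\sum_{x\in(\mathbb{Z}/q\mathbb{Z})^\times} e\!\left(\frac{k_1x+k_2a\overline{x}}{q}\right) = \frac{K(k_1,ak_2;q)}{q-1}. \]
For $f$ I will take a smoothed dual potential
\[ f=\sum_{0<|k|\le q^{1-\varepsilon}} \frac{\overline{\widehat{\mu}_a(k)}}{4\pi^2|k|^2}\,\phi(k/T)\, e(\langle k,x\rangle), \]
essentially the solution of $-\Delta f = \mu-\mathrm{Vol}$ truncated at frequency $T=q^{1/2-\varepsilon}$ or so. A cleaner route is the $H^{-1}$ lower bound, valid on compact manifolds for a truncated mollified version: by duality with $\nabla f$,
\[ W_2(\mu,\mathrm{Vol}) \ge \frac{\|\mu-\mathrm{Vol}\|_{\dot H^{-1},\le T}^2}{\|\nabla f\|_{\infty}}, \]
which requires controlling $\|\nabla f\|_\infty$ through $|\widehat\mu|\le 2q^{-1/2}$ (Weil). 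The difficulty is that $\|\nabla f\|_\infty$ is a sum of about $T^2$ terms, each of size $q^{-1/2}/T$, which is far too big. So I will instead use the $W_2$-specific inequality
\[ W_2(\mu,\mathrm{Vol})^2 \gg \sum_{0<|k|\le T}\frac{|\widehat\mu(k)|^2}{|k|^2} - O\!\left(\frac{1}{T^2}\right)\quad\text{(or a variant)}, \]
obtained from $\int |\mu * \rho_t - 1|^2$ and the Lipschitz property of heat semigroup smoothing: $\|\mu*\rho_t - \mathrm{Vol}\|_{\dot H^{-1}} \le W_2(\mu,\mathrm{Vol})\cdot\|\mu*\rho_t\|_\infty^{1/2}$ (Peyre's inequality $\|\mu-\nu\|_{\dot H^{-1}}\le \max(\|\mu\|_\infty,\|\nu\|_\infty)^{1/2}W_2$, applied to the heat-flowed measures, since $W_2$ contracts under the heat flow). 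This requires $\|\mu_a*\rho_t\|_\infty \ll 1$, which holds for $t \ge q^{-1+\varepsilon}$ on average and, by Weil, for $t\gg (\log q)/q$ pointwise: the density is $1+O(\sum_k q^{-1/2}e^{-t|k|^2})=1+O(q^{-1/2}t^{-1})$, so $t\asymp q^{-1/2}$ already suffices. This is the main obstacle. With $t=Cq^{-1/2}$ the resulting range of frequencies $|k|\lesssim q^{1/4}$ would only give $\log q$ over a restricted dyadic range, which is still $\gg\log q$ in total, and that is acceptable.

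\textbf{Averaging.} By Peyre's inequality it therefore suffices to show
\[ \frac1{q-1}\sum_a\sum_{0<|k|\le q^{1/4}}\frac{e^{-2t|k|^2}|K(k_1,ak_2;q)|^2}{q^2|k|^2}\gg\frac{\log q}{q}. \]
For $k_1k_2\ne0$, orthogonality over $a$ gives $\sum_a|K(k_1,ak_2)|^2 = \sum_{x,y}e(k_1(x-y)/q)\sum_a e(ak_2(\overline{x}-\overline{y})/q)$, which equals $q(q-1)+O(q)$. Hence the average of $|\widehat\mu_a(k)|^2$ is $\asymp 1/q$, and summing $1/|k|^2$ over $1\le|k|\le q^{1/4}$ with $k_1k_2\neq0$ produces $\gg\log q$. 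Combining these gives the claimed lower bound.
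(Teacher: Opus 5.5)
Your proposal is correct and follows essentially the paper's route. Both arguments smooth by the heat kernel, use $W_2$-contractivity of the heat flow, bound $W_2$ of the smoothed measure from below by its $\dot H^{-1}$ norm, apply Parseval, and average over $a$ via the Kloosterman second moment $\sum_a |K|^2=q^2-q-1$, summing $1/|k|^2$ over $|k|\le q^{\theta}$ to get $\log q$.

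The only real difference is the $\dot H^{-1}$ step. You use the Loeper--Peyre inequality, which needs only a bounded density, so $t\asymp q^{-1/2}$ suffices. The paper instead uses Ledoux's lemma, which needs density $1+O(q^{-1/4})$ and hence $t=q^{-1/4}$. Two small points do not affect the argument you actually run: your aside that Weil gives pointwise boundedness for $t\gg(\log q)/q$ is wrong (Weil only gives $t\gg q^{-1/2}$), and the terms with $q\mid k_1,k_2$, where $\widehat{\mu}=1$, must be bounded separately, which is trivial at $t\asymp q^{-1/2}$.
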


Humphries \cite[Remark 3.2]{HU1} proved that in dimension $d=2$ and for prime $q$, the $L^2$ discrepancy with respect to balls with fixed radius $0<r \le 1/2$ satisfies
\[ D_2 (\mu_{a,q,2}, \mathrm{Vol}, r) \ll r q^{-1/2} . \]
Consequently, after further averaging over $r$,
\[ D_2 (\mu_{a,q,2}, \mathrm{Vol}) \ll q^{-1/2} . \]
The former relation was also shown to be sharp in the range $r \ll (\log q)^{-1/2} q^{-1/2}$ \cite[Theorem 1.4]{HU1}. We prove that both relations are sharp on average, the former in the full range $0<r\le 1/2$.
\begin{thm}\label{modularhyperbolaD2_theorem} Let $q$ be a prime. For all $0<r\le 1/2$, we have
\[ \frac{1}{q-1} \sum_{a \in (\mathbb{Z}/q\mathbb{Z})^{\times}} D_2(\mu_{a,q,2}, \mathrm{Vol},r)^2 = \frac{\pi r^2 - (32/3) r^3 + \pi^2 r^4}{q-1} + \frac{\pi r^2 - \pi^2 r^4}{(q-1)^2} + O \left( \min \left\{ \frac{r^2}{q^2}, \frac{r}{q^3} \right\} \right) \]
and
\[ \frac{1}{q-1} \sum_{a \in (\mathbb{Z}/q\mathbb{Z})^{\times}} D_2(\mu_{a,q,2}, \mathrm{Vol})^2 = \frac{\pi/12-1/3+\pi^2/80}{q-1} + \frac{\pi/12-\pi^2/80}{(q-1)^2} + O \left( \frac{1}{q^3} \right) \]
with universal implied constants.
\end{thm}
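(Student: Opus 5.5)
The plan is to reduce everything to Fourier coefficients, via Parseval's identity and an exact evaluation of the average over $a$ of the squared Kloosterman sums. Fix $0<r\le 1/2$ and write $B=B(0,r)$ and $g(k)=|\widehat{\mathds{1}_B}(k)|^2$ for $k\in\mathbb{Z}^2$. Since $\mu(B(x,r))$ is the convolution $\mu * \mathds{1}_B$ evaluated at $x$, Parseval gives
\[ D_2(\mu_{a,q,2},\mathrm{Vol},r)^2=\sum_{k\neq 0}|\widehat{\mu}_{a,q,2}(k)|^2 g(k), \qquad \widehat{\mu}_{a,q,2}(k)=\frac{1}{q-1}\sum_{x\in(\mathbb{Z}/q\mathbb{Z})^\times} e\Big(-\frac{k_1x+k_2a\overline{x}}{q}\Big). \]
The coefficient $\widehat{\mu}_{a,q,2}(k)$ is a normalized Kloosterman sum and depends only on $k \bmod q$.

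I then compute $\frac{1}{q-1}\sum_a|\widehat{\mu}_{a,q,2}(k)|^2$ exactly, by expanding the square and summing over $a$ first, using $\sum_{a\in(\mathbb{Z}/q\mathbb{Z})^\times}e(an/q)=-1$ for $q\nmid n$. This gives three cases.
\begin{enumerate}[(i)]
\item If $k\equiv 0 \pmod q$, the average is $1$.
\item If exactly one of $k_1,k_2$ is divisible by $q$, a Ramanujan sum gives $1/(q-1)^2$.
\item If $q\nmid k_1$ and $q\nmid k_2$, one gets $\sum_a |S|^2=(q-1)^2+q-2$. Hence the average is
\[ A:=\frac{1}{q-1}+\frac{q-2}{(q-1)^3}=\frac{1}{q-1}+\frac{1}{(q-1)^2}-\frac{1}{(q-1)^3}. \]
\end{enumerate}

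Next I decompose $\sum_{k\neq0}$ according to these three classes. The first ingredient is $S:=\sum_{k\neq0}g(k)=|B|-|B|^2=\pi r^2-\pi^2r^4$, by Parseval. The second is the contribution $L$ of the two axes $k_1=0$ or $k_2=0$, with $k\ne 0$. By one-dimensional Parseval in the other variable, $\sum_{k_2}g(0,k_2)=\int(2\sqrt{r^2-t^2})^2\,\mathrm{d}t=16r^3/3$. Hence $L=32r^3/3-2\pi^2r^4$.

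Ignoring the lines $k_1=qj$ and $k_2=qj$ with $j\neq0$, the average equals $A(S-L)+L/(q-1)^2$. Expanding $A$, this is exactly
\[ \frac{\pi r^2-(32/3)r^3+\pi^2r^4}{q-1}+\frac{\pi r^2-\pi^2r^4}{(q-1)^2}+O\Big(\frac{r^2}{q^3}\Big). \]
Note that $r^2/q^3\le\min\{r^2/q^2,r/q^3\}$, so this error is admissible.

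The main obstacle is controlling the remaining terms by $O(\min\{r^2/q^2,r/q^3\})$. These are the lines $k_1=qj$ (or $k_2=qj$) with $j\neq0$, weighted by at most $1/q$, and the points of $q\mathbb{Z}^2\setminus\{0\}$, weighted by $1$.
\begin{enumerate}[(a)]
\item For a line $k_1=qj$, I apply one-dimensional Parseval in $k_2$:
\[ \sum_{k_2}g(qj,k_2)=\int\Big|\int \mathds{1}_B(x_1,t)\,e(-qjx_1)\,\mathrm{d}x_1\Big|^2\mathrm{d}t . \]
The inner integral is the Fourier transform of a segment of length $2\sqrt{r^2-t^2}$, so it is at most $\min\{2\sqrt{r^2-t^2},\,1/(\pi q|j|)\}$. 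This gives $\ll\min\{r^3,\,r/(q^2j^2)\}$, and summing over $j$ and multiplying by the weight $1/q$ yields $\ll\min\{r^3/q,\,r/q^3\}$. This is $\ll\min\{r^2/q^2,r/q^3\}$, since $r^3/q\le r^2/q^2$ when $r\le1/q$.
\item For $k\in q\mathbb{Z}^2\setminus\{0\}$, I use the Bessel bounds $g(k)\ll\min\{r^4,\,r|k|^{-3}\}$, which follow from $\widehat{\mathds{1}_B}(k)=rJ_1(2\pi r|k|)/|k|$. Splitting at $|k|\asymp 1/r$ gives a total of $\ll\min\{r^2/q^2,\,r/q^3\}$.
\end{enumerate}
Together these prove the first formula.

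Finally, the second formula follows by computing $2\int_0^{1/2}\cdots\,\mathrm{d}r$ term by term: $2\int_0^{1/2}\pi r^2\,\mathrm{d}r=\pi/12$, $2\cdot\frac{32}{3}\int_0^{1/2} r^3\,\mathrm{d}r=1/3$ and $2\pi^2\int_0^{1/2} r^4\,\mathrm{d}r=\pi^2/80$. The error integrates to $\int_0^{1/2} r\,q^{-3}\,\mathrm{d}r=O(q^{-3})$.
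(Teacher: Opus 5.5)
Your proof follows essentially the same route as the paper. The common ingredients are:
\begin{itemize}
\item Parseval with $b_k(r)=|\widehat{\mathds{1}_B}(k)|^2$;
\item the exact second moment $\sum_a K_2(a;q)^2=q^2-q-1$ (your $(q-1)^2+q-2$);
\item the total mass $\pi r^2(1-\pi r^2)$;
\item the axis sum $\frac{16}{3}r^3-\pi^2 r^4$, obtained by one-dimensional Parseval of the chord-length function, which is the paper's $\sqrt{1-x^2/r^2}$ up to the factor $2r$;
\item separate bounds for the lines $k_1\in q\mathbb{Z}\setminus\{0\}$ and for $q\mathbb{Z}^2\setminus\{0\}$.
\end{itemize}
Your main-term bookkeeping $A(S-L)+L/(q-1)^2$ is correct and reproduces both constants. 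The one methodological difference is how you treat the off-axis lines. You use Parseval in $k_2$ together with the Fourier transform of a chord, whereas the paper applies the pointwise bound $b_k(r)\ll\min\{r^4,r|k|^{-3}\}$ to a two-dimensional lattice sum. Both work.

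However, the summation over $j$ in step (a) is wrong as written. From the per-line bound $\min\{r^3,r/(q^2j^2)\}$ you cannot conclude $\sum_{j\neq 0}\ll\min\{r^3,r/q^2\}$. When $r\le 1/q$, roughly $1/(rq)$ values of $j$ each contribute $r^3$, so $\sum_{j\neq0}\min\{r^3,r/(q^2j^2)\}\asymp r^2/q$, which is much larger than $r^3$. This is not merely a loss in your bound. The identity $\sum_{j\neq0}\sin^2(\pi j x)/(\pi j)^2=x(1-x)$ for $0\le x\le 1$, applied with $x=2q\sqrt{r^2-t^2}$, shows that the sum of $b_k(r)$ over these lines really is of order $r^2/q$ for small $r$.

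The correct estimate is $\sum_{j\neq0}\min\{r^3,r/(q^2j^2)\}\ll\min\{r^2/q,\,r/q^2\}$, which is exactly the paper's bound for the lines $q\mid k_1$, $k_1\neq 0$. After multiplying by the weight $\ll 1/q$, this gives $\min\{r^2/q^2,\,r/q^3\}$. That is precisely the admissible error term, with no room to spare, rather than the smaller $\min\{r^3/q,\,r/q^3\}$ you claimed. With this correction your argument is complete, and the integration over $r$ for the second formula is fine.
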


The orders of magnitude in Theorem \ref{modularhyperbolaD2_theorem} are in perfect agreement with the case of the empirical measure $\mu_N$ of $N$ i.i.d.\ random points in $\mathbb{R}^2 / \mathbb{Z}^2$, for which
\begin{equation}\label{iidD2rate}
\mathbb{E} (D_2(\mu_N, \mathrm{Vol}, r)^2) = \frac{\pi r^2 -\pi^2 r^4}{N} \quad \textrm{and} \quad \mathbb{E} (D_2(\mu_N, \mathrm{Vol})^2) = \frac{\pi / 12 - \pi^2 / 80}{N}.
\end{equation}
The proof of \eqref{iidD2rate} is recalled in Section \ref{D2harmonic_section}. Interestingly, the constants in Theorem \ref{modularhyperbolaD2_theorem} deviate from the random case.

The average of the $L^2$ discrepancy over primes $q$ instead of over residue classes $a \in (\mathbb{Z}/q\mathbb{Z})^{\times}$ is closely related to deep open problems about the distribution of Kloosterman sum angles. We will show that if the horizontal Sato--Tate conjecture for Kloosterman sums (see \eqref{satotate}) is true, then for an arbitrary nonzero integer $a$,
\begin{equation}\label{averageD2overprimes}
\begin{split} \lim_{Q \to \infty} \frac{1}{\pi(Q)} \sum_{\substack{q \le Q \\ q \textrm{ prime}, \,\, q \nmid a}} (q-1) D_2(\mu_{a, q, 2}, \mathrm{Vol}, r)^2 &= \pi r^2 - \frac{32}{3} r^3 + \pi^2 r^4 \quad \textrm{uniformly in } 0<r \le \frac{1}{2}, \\ \lim_{Q \to \infty} \frac{1}{\pi(Q)} \sum_{\substack{q \le Q \\ q \textrm{ prime}, \,\, q \nmid a}} (q-1) D_2(\mu_{a, q, 2}, \mathrm{Vol})^2 &= \frac{\pi}{12} - \frac{1}{3} + \frac{\pi^2}{80} . \end{split}
\end{equation}
Here $\pi(Q)$ denotes the prime-counting function.

In comparison, the best known estimate for $D_{\infty}$ in dimension $d=2$ with prime $q$ is $D_{\infty}(S_{a,q,2}, \mathrm{Vol}) \ll q^{-1/3}$, but it is not known whether the exponent $-1/3$ is optimal \cite[Remark 3.1]{HU2}.

\subsection{Modular parabolas}\label{modularparabola_section}

Given a positive integer $q$ and $a \in (\mathbb{Z}/q\mathbb{Z})^{\times}$, $b \in \mathbb{Z}/q \mathbb{Z}$, the modular parabola is defined as the set
\[ P_{a,b,q}= \left\{ \left( \frac{x}{q}, \frac{ax^2+bx}{q} \right)+\mathbb{Z}^2 : x \in \mathbb{Z}/q\mathbb{Z}  \right\} \subset \mathbb{R}^2 / \mathbb{Z}^2 \]
of cardinality $N=q$. Modular parabolas have applications in the Heilbronn triangle problem and the no-three-in-line problem in combinatorial geometry \cite{RO}.

The Fourier transform of the empirical measure of the set $P_{a,b,q}$ is a quadratic Gauss sum. Using a smoothing inequality stated in Section \ref{smoothing_section}, we will show that the same rate is attained in both the quadratic Wasserstein metric and the $L^2$ discrepancy with respect to balls as for i.i.d.\ random points. For the sake of simplicity, we only consider prime $q$.
\begin{thm}\label{modularparabola_theorem} Let $q$ be a prime, let $a \in (\mathbb{Z}/q\mathbb{Z})^{\times}$, $b \in \mathbb{Z}/q \mathbb{Z}$ and let $\mu_{a,b,q}=q^{-1} \sum_{p \in P_{a,b,q}} \delta_p$ denote the empirical measure of the set $P_{a,b,q}$. For any $0<r \le 1/2$, we have
\[ \begin{split} W_2 (\mu_{a,b,q}, \mathrm{Vol}) &\asymp (\log q)^{1/2} q^{-1/2}, \\ D_2(\mu_{a,b,q},\mathrm{Vol},r)^2 &= \frac{\pi r^2 - (16/3) r^3}{q} + O \left( \min \left\{ \frac{r^2}{q^2}, \frac{r}{q^3} \right\} \right), \\ D_2(\mu_{a,b,q},\mathrm{Vol})^2 &= \frac{\pi / 12 -1/6}{q} + O \left( \frac{1}{q^3} \right) \end{split} \]
with universal implied constants.
\end{thm}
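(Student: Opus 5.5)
Everything goes through the Fourier side. For $k=(k_1,k_2)\in\mathbb{Z}^2$, the Fourier coefficient of $\mu_{a,b,q}$ is
\[ \widehat{\mu}(k)=\frac1q\sum_{x \in \mathbb{Z}/q\mathbb{Z}} e\Big(\frac{k_1x+k_2(ax^2+bx)}{q}\Big). \]
We split into three cases.
\begin{itemize}
\item If $q\mid k_2$ and $q\nmid k_1$, then $\widehat\mu(k)=0$.
\item If $q\mid k_2$ and $q\mid k_1$, then $\widehat\mu(k)=1$.
\item If $q\nmid k_2$, this is a quadratic Gauss sum with nondegenerate leading coefficient. Hence $|\widehat\mu(k)|^2=1/q$ exactly, for every such $k$. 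We use that $q$ is prime and odd; $q=2$ is absorbed into the constants.
\end{itemize}
So the only nonzero frequencies are $k\in q\mathbb{Z}^2\setminus\{0\}$, with $|\widehat\mu|=1$, and all $k$ with $q\nmid k_2$, with $|\widehat\mu|^2=1/q$.

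\textbf{The $D_2$ asymptotics.} The harmonic-analysis identity of Section \ref{D2harmonic_section} gives
\[ D_2(\mu,\mathrm{Vol},r)^2=\sum_{k\ne0}|\widehat\mu(k)|^2|\widehat{\chi_r}(k)|^2, \]
where $\chi_r$ is the indicator of the ball of radius $r$. Inserting the three cases, the main term is
\[ \frac1q\sum_{k\ne0}|\widehat{\chi_r}(k)|^2 \]
minus the correction
\[ \frac1q\sum_{k_1\ne0}|\widehat{\chi_r}(k_1,0)|^2 , \]
which comes from the line $k_2=0$ (those $k$ with $q\mid k_2$, $q\nmid k_1$), plus errors from the other multiples of $q$.

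By Parseval, the full sum equals $\pi r^2-\pi^2r^4$. The line sum equals $\int_0^1 g(x)^2\,dx-(\pi r^2)^2$, where $g$ is the projection of $\chi_r$ to the first coordinate, $g(x)=2\sqrt{r^2-x^2}$ on $|x|<r$. Since $\int_{-r}^r 4(r^2-x^2)\,dx=\frac{16}{3}r^3$, subtracting gives
\[ \frac{\pi r^2-\pi^2r^4-\frac{16}3 r^3+\pi^2r^4}{q}=\frac{\pi r^2-\frac{16}{3}r^3}{q}, \]
which matches the statement. The remaining frequencies all have $|k|\ge q$: either $k\in q\mathbb{Z}^2$, or $k_2\ne0$ with $q\mid k_2$, or we are correcting the double count on the line. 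We bound them using $|\widehat{\chi_r}(k)|\ll\min\{r^2,\, r^{1/2}|k|^{-3/2}\}$. The tail sum gives $O(\min\{r^2/q^2, r/q^3\})$, and averaging over $r$ gives the second formula once we check $2\int_0^{1/2}(\pi r^2-\frac{16}{3}r^3)\,dr=\pi/12-1/6$.

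\textbf{The $W_2$ bounds.} For the upper bound, use the smoothing inequality of Section \ref{smoothing_section}:
\[ W_2(\mu,\mathrm{Vol})^2\ll \frac1{T^2}+\sum_{0<|k|\le T}\frac{|\widehat\mu(k)|^2}{|k|^2}. \]
Take $T=\sqrt q$. All relevant $k$ then have $|\widehat\mu(k)|^2\le 1/q$ or vanish, so
\[ W_2(\mu,\mathrm{Vol})^2\ll \frac1q+\frac1q\sum_{0<|k|\le \sqrt q}|k|^{-2}\ll\frac{\log q}{q}. \]
For the lower bound, use a dual Fourier lower bound for $W_2$. Pair against a smooth test function, or use $W_2\ge W_1$ together with the $\dot H^{-1}$-type lower bound
\[ W_2(\mu,\mathrm{Vol})^2\gg\sum_{k\ne0}\frac{|\widehat\mu(k)|^2}{|k|^2}\,\widehat{\eta}_\varepsilon(k) \]
for a mollifier at scale $\varepsilon\approx q^{-1/2}$. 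This is valid when $\mu$ is comparable to a measure with bounded density after smoothing at scale $q^{-1/2}$, and the local counts of the modular parabola in boxes of side $q^{-1/2}$ are bounded by the same Gauss-sum estimate. We plan to invoke the lower-bound version of the smoothing inequality from Section \ref{smoothing_section}, or the inequality $W_2\gg \|\mu*\eta-1\|_{\dot H^{-1}}$ under a density bound. Then the frequencies $1\le|k|\le\sqrt q$ with $q\nmid k_2$ give $\gg \frac1q\log q$.

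The main obstacle is this lower bound: justifying the negative Sobolev lower bound requires control of the smoothed density. The Weil-type uniformity of the parabola at scale $q^{-1/2}$ should supply this.
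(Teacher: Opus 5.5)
Your $D_2$ computation and your $W_2$ upper bound follow the paper's proof. The ingredients are the same three-case evaluation of $\widehat{\mu_{a,b,q}}(k)$ via Gauss sums, Lemma~\ref{D2torus_lemma}, and the line $k_2=0$ evaluated by Parseval for the projected disc (the paper uses $k_1=0$; this is the same by symmetry). The tail bounds are the ones in \eqref{qmidk1k2}--\eqref{qmidk1nmidk2}, and the upper bound uses Theorem~\ref{torussmoothing_theorem1} with $H\asymp\sqrt q$. The genuine gap is the lower bound $W_2(\mu_{a,b,q},\mathrm{Vol})\gg(\log q)^{1/2}q^{-1/2}$, which you leave as the main obstacle. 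The justification you sketch for it is false.

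\textbf{Why the proposed scale fails.} You mollify at scale $\varepsilon\approx q^{-1/2}$ and claim the local counts in boxes of side $q^{-1/2}$ are bounded. They are not. For $a=1$, $b=0$, the points $(x/q,x^2/q)$ with $0\le x\le q^{1/4}$ all lie in $[0,q^{-1/2}]^2$, where the expected count is $1$. So near the vertex of the parabola the mollified density is $\gg q^{1/4}$.

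The Gauss-sum bound cannot give more. From $|\widehat{\mu_{a,b,q}}(k)|\le q^{-1/2}$ for $k\notin q\mathbb{Z}^2$, absolute summation only shows that the mollified density $1+f$ satisfies $|f|\ll q^{-1/2}\varepsilon^{-2}$. This is $O(1)$ only when $\varepsilon\gg q^{-1/4}$. So at your scale neither Lemma~\ref{ledoux_lemma} (which needs $|f|\le c\le 1$) nor a Peyre-type bound (which needs an upper bound on the density) applies. A density bound of $q^{1/4}$ would cost a factor $q^{1/8}$. Passing through $W_2\ge W_1$ does not help either, because $W_1$ is bounded above, not below, by the $\dot H^{-1}$ norm.

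\textbf{The missing observation and the paper's fix.} You do not need to smooth at the natural scale $q^{-1/2}$. Truncating the frequencies at $q^{\delta}$ for a fixed $\delta>0$ only changes $\log q$ by a constant factor. The paper proceeds as follows:
\begin{enumerate}
\item Convolve with the heat kernel $P_t$, $t=q^{-1/4}$, and use the contraction $W_2(P_t*\mu,\mathrm{Vol})\le W_2(\mu,\mathrm{Vol})$.
\item Bound the density $1+f$ of $P_t*\mu$ by absolute summation of its Fourier series: $|f|\ll q^{-1/2}t^{-1}+q^{-2}t^{-1}\ll q^{-1/4}$.
\item Apply Lemma~\ref{ledoux_lemma} to obtain
\[ W_2(\mu_{a,b,q},\mathrm{Vol})^2\gg\sum_{k\neq0}e^{-8\pi^2|k|^2t}\frac{|\widehat{\mu_{a,b,q}}(k)|^2}{|k|^2}\ge\frac{e^{-8\pi^2}}{q}\sum_{\substack{0<|k|\le q^{1/8}\\ k_2\neq0}}\frac{1}{|k|^2}\gg\frac{\log q}{q}. \]
\end{enumerate}
Your outline becomes correct if you take $\varepsilon=q^{-\delta}$ with $\delta<1/4$ (for example, the heat kernel as above) and drop the local-count claim.
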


It would be interesting to see whether the estimate for the Wasserstein metric can be refined to an asymptotic relation $W_2(\mu_{a,b,q}, \mathrm{Vol}) = (c+o(1)) (\log q)^{1/2} q^{-1/2}$ with some constant $c$, but this remains open. Note that the constants in the asymptotics for $D_2$ in Theorem \ref{modularparabola_theorem} differ from the random case \eqref{iidD2rate}.

The estimates in Theorem \ref{modularparabola_theorem} hold without averaging over parameters such as $a$, $b$ or $q$. The reason is that quadratic Gauss sums can be explicitly evaluated, and in particular have constant absolute value $\sqrt{q}$. In contrast, the Kloosterman sums that appear in the analysis of modular hyperbolas are simply bounded above by $\sqrt{q}$, and only attain the order of magnitude $\sqrt{q}$ on average.

\section{Approach}\label{approach_section}

This section discusses some general properties of the Wasserstein metric and the $L^2$ discrepancy with respect to balls that will play an important role in the proofs of the main results. Section \ref{WinftyWp_section} presents an upper estimate for $W_{\infty}$ in terms of $W_p$ with finite $p$ for empirical measures. Section \ref{smoothing_section} discusses upper and lower estimates for $W_p(\mu, \mathrm{Vol})$ in terms of the Fourier transform of $\mu$. Section \ref{D2harmonic_section} recalls an explicit formula for $D_2(\mu, \mathrm{Vol},r)$ in the torus in terms of the Fourier transform of $\mu$.

Let us start by recalling the behavior of the Wasserstein metric under Lipschitz maps.
\begin{lem}\label{lipschitz_lemma} Let $(X, \rho_X)$ and $(Y, \rho_Y)$ be compact metric spaces, and let $f: X \to Y$ be Lipschitz with Lipschitz constant $K \ge 0$, that is, $\rho_Y (f(x),f(y)) \le K \rho_X (x,y)$ for all $x,y \in X$. Let $\mu, \nu \in \mathcal{P}(X)$, and let $f_* \mu, f_* \nu \in \mathcal{P}(Y)$ denote the pushforward measures of $\mu$ and $\nu$. Then $W_p (f_* \mu, f_* \nu) \le K W_p (\mu, \nu)$ for all $1 \le p \le \infty$.
\end{lem}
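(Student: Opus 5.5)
The plan is to push an optimal coupling forward along $f \times f$ and compare costs pointwise. First, existence of an optimal coupling: $X$ is compact, so $\mathrm{Coupling}(\mu,\nu)$ is weakly compact. For finite $p$ the cost $\pi \mapsto \int \rho_X^p \, \mathrm{d}\pi$ is weakly continuous, because $\rho_X^p$ is continuous and bounded on $X \times X$. For $p=\infty$ the map $\pi \mapsto \inf\{K : \rho_X \le K \,\, \pi\textrm{-a.e.}\}$ is lower semicontinuous, since it equals $\max \rho_X$ on $\mathrm{supp}(\pi)$ and supports behave lower semicontinuously under weak limits. Alternatively, I would avoid the existence question entirely by working with an arbitrary coupling and taking the infimum at the end. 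I will do the latter.

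Let $\pi \in \mathrm{Coupling}(\mu,\nu)$ be arbitrary, and define $\tilde\pi = (f\times f)_*\pi \in \mathcal{P}(Y\times Y)$. This is a Borel probability measure because $f$ is Lipschitz, hence continuous, hence Borel. For Borel $A \subseteq Y$ we have
\[ \tilde\pi(A\times Y) = \pi\big(f^{-1}(A)\times X\big) = \mu(f^{-1}(A)) = f_*\mu(A), \]
and similarly the second marginal is $f_*\nu$. So $\tilde\pi \in \mathrm{Coupling}(f_*\mu, f_*\nu)$.

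For $1\le p<\infty$, the change of variables formula and the Lipschitz bound give
\[ \int_{Y\times Y} \rho_Y^p \, \mathrm{d}\tilde\pi = \int_{X\times X} \rho_Y(f(x),f(y))^p \, \mathrm{d}\pi(x,y) \le K^p \int_{X\times X} \rho_X^p \, \mathrm{d}\pi . \]
Hence $W_p(f_*\mu,f_*\nu) \le K \left(\int \rho_X^p \, \mathrm{d}\pi\right)^{1/p}$. Taking the infimum over $\pi$ yields the claim.

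For $p=\infty$, suppose $\rho_X(x,y)\le L$ for $\pi$-a.e. $(x,y)$. Then the set $\{(u,v) : \rho_Y(u,v) > KL\}$ has $\tilde\pi$-measure equal to $\pi(\{(x,y) : \rho_Y(f(x),f(y))>KL\})$. This set is contained in $\{\rho_X > L\}$, so its measure is $0$. Therefore $W_\infty(f_*\mu,f_*\nu) \le KL$, and taking the infimum over $L$ and then over $\pi$ gives the result. Alternatively, one could let $p\to\infty$ using $\lim_{p\to\infty}W_p=W_\infty$.

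The only point needing care is measurability, which continuity of $f$ handles; I do not expect any real obstacle.
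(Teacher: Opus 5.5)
Your proof is correct and takes the same approach as the paper. The paper simply says the lemma is clear from the definition of $W_p$, citing Villani, and that definition-level argument is exactly yours: push an arbitrary coupling forward by $f \times f$, compare costs pointwise (including the $p=\infty$ case, where the $K=0$ case is also handled), and take the infimum over couplings.
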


\begin{proof} This is clear from the definition of the Wasserstein metric, see \cite[p.\ 110]{VI2}.
\end{proof}

\subsection{Estimating $W_{\infty}$ in terms of $W_p$}\label{WinftyWp_section}

We present a converse to the general inequality $W_p (\mu, \nu) \le W_q (\mu, \nu)$, $1 \le p \le q \le \infty$, in the special case when $\mu$ is supported on $N$ points with equal weights and $\nu$ is the uniform measure. In particular, we show that $W_p(\mu_N, \mathrm{Vol})$ with $p=\log N$ is already as large as $W_{\infty}(\mu_N, \mathrm{Vol})$ up to a constant factor. An upper estimate for $W_p(\mu_N, \mathrm{Vol})$ with explicit dependence on $1 \le p < \infty$ thus implies an upper estimate for $W_{\infty}(\mu_N, \mathrm{Vol})$ simply by substituting $p=\log N$.

\begin{lem}\label{WqWplemma} Let $M=\mathbb{R}^d/\mathbb{Z}^d$ or $\mathbb{S}^d$, let $a_1, a_2, \ldots, a_N \in M$ be arbitrary points and let $\mu_N=N^{-1} \sum_{n=1}^N \delta_{a_n}$. For any $1 \le p \le q \le \infty$, we have
\[ N^{1/q} W_q (\mu_N, \mathrm{Vol}) \le 2 N^{1/p} W_p (\mu_N, \mathrm{Vol}) . \]
In particular,
\[ W_{\infty} (\mu_N, \mathrm{Vol}) \le 2e W_{\log N} (\mu_N, \mathrm{Vol}) . \]
\end{lem}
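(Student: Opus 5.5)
Write the optimal $W_p$ coupling in disintegrated form. Since $\mu_N$ is a sum of $N$ atoms of mass $1/N$, any coupling $\pi$ of $\mu_N$ and $\mathrm{Vol}$ can be written as
\[ \pi = \frac{1}{N}\sum_{n=1}^N \delta_{a_n}\otimes \nu_n, \]
where $\nu_n\in\mathcal{P}(M)$ and $N^{-1}\sum_n \nu_n=\mathrm{Vol}$. If several $a_n$ coincide, split the mass of that atom arbitrarily among them. Fix such a $\pi$ that is optimal for $W_p$, so that
\[ W_p^p=\frac1N\sum_n\int\rho_M(a_n,y)^p\,\mathrm d\nu_n(y). \]
Let $R_n$ be the essential supremum of $\rho_M(a_n,\cdot)$ with respect to $\nu_n$, and set $R=\max_n R_n$.

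I will use the same coupling $\pi$, which is admissible for every order, to bound $W_q$. For $q<\infty$, the pointwise bound $\rho^q\le R^{q-p}\rho^p$ on the support gives
\[ W_q^q\le \frac1N\sum_n\int\rho_M(a_n,y)^q\,\mathrm d\nu_n\le R^{q-p}\,W_p^p . \]
Everything therefore reduces to a single key claim,
\[ R\le 2N^{1/p}W_p(\mu_N,\mathrm{Vol}). \tag{$*$} \]
Granting $(*)$, we get $W_q^q\le 2^{q-p}N^{(q-p)/p}W_p^q$. Taking $q$-th roots and multiplying by $N^{1/q}$ yields
\[ N^{1/q}W_q\le 2^{1-p/q}N^{1/p}W_p\le 2N^{1/p}W_p . \]
For $q=\infty$ we directly have $W_\infty\le R$, and $(*)$ finishes. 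The final statement then follows by taking $q=\infty$ and $p=\log N$, since $N^{1/\log N}=e$.

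The main obstacle is $(*)$, which is where optimality of $\pi$ must enter, as an arbitrary coupling can send a tiny amount of mass very far. Note first that each single cell satisfies $\int\rho_M(a_n,\cdot)^p\,\mathrm d\nu_n\le N W_p^p$, so it suffices to show that a cell reaching distance $R_n$ forces some cell to carry $p$-th moment at least $(R_n/2)^p$.

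I would argue via $c$-cyclical monotonicity of the support of $\pi$, which holds for optimal couplings with cost $c=\rho_M^p$. Take $y\in\operatorname{supp}\nu_n$ with $\rho_M(a_n,y)$ close to $R_n$, and consider the points $z$ near the geodesic from $a_n$ to $y$ at distance at least $R_n/2$ from $a_n$. These points have positive $\mathrm{Vol}$-mass, so they belong to supports of various $\nu_m$. Monotonicity applied to the pairs $(a_n,y)$ and $(a_m,z)$ should force $\rho_M(a_m,z)$ to be comparably large, roughly $\ge R_n/2$, on a set of points $z$ of full $\nu_m$-measure in a region of mass comparable to that of the cell.

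From this I would extract that some cell $\nu_m$ has $\int\rho_M(a_m,\cdot)^p\,\mathrm d\nu_m\ge (R_n/2)^p$. This is the step where the constant $2$ and the geometry of $M$ (geodesics in the torus or the sphere) have to be handled carefully. If the monotonicity argument proves too lossy, my fallback is to work with the $W_p$-optimal semi-discrete (Laguerre-type) partition of $M$ into cells of volume $1/N$, for $p>1$, and argue directly on the shape of the cells. The case $p=1$ would then follow by a limiting argument $p\downarrow1$.
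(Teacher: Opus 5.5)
Your reduction is sound: with a single coupling $\pi$ you get $W_q^q\le R^{q-p}W_p^p$, the exponent bookkeeping is right, $W_\infty\le R$ is immediate, and the passage $p\downarrow 1$ would be harmless if you had $(*)$ with constant $2$ for every $p>1$. The gap is that $(*)$ carries the entire content of the lemma, and it is not proved.

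Note also that $(*)$ concerns the support of one specific coupling, the $W_p$-optimal semi-discrete one. It is therefore a priori stronger than what the lemma asks, which for $q=\infty$ is only the existence of \emph{some} coupling of small reach.

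The monotonicity sketch does not deliver $(*)$. For $p>1$, two-point $\rho_M^p$-monotonicity for the pairs $(a_n,y)$ and $(a_m,z)$ does give $\rho_M(a_m,z)\ge\rho_M(a_n,z)$. But this holds only for $z$ lying exactly on a minimizing geodesic from $a_n$ to $y$, which is a null set. Off the geodesic, the inequality $\rho_M(a_n,y)^p+\rho_M(a_m,z)^p\le\rho_M(a_n,z)^p+(\rho_M(a_m,z)+\rho_M(z,y))^p$ weakens. It forces $\rho_M(a_m,z)\gg R_n$ only in a tube around the far half of the geodesic whose width, relative to $R_n$, shrinks to $0$ as $p\downarrow 1$. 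At $p=1$ the inequality follows from the triangle inequality and says nothing. The tube's volume has no relation to the cell volume $1/N$, so your ``region of mass comparable to that of the cell'' is not justified.

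Turning the tube estimate into $R\le C N^{1/p}W_p$ would need extra input, such as $W_p(\mu_N,\mathrm{Vol})\gg N^{-1/d}$. Even then it would give a constant depending on $d$ and $p$ rather than $2$, and one that deteriorates as $p\downarrow1$; that also breaks your limiting argument for $p=1$.

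The Laguerre-cell fallback has the same defect. Optimal semi-discrete cells need not contain their sites (even for $p=2$), are not convex in general on the torus or sphere, and you give no mechanism excluding a small far-reaching piece of a cell.

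The missing idea is to avoid the semi-discrete coupling altogether.
\begin{enumerate}
\item Between two equal-weight $N$-point empirical measures $\mu_N$ and $\nu_N=N^{-1}\sum_n\delta_{b_n}$, an optimal coupling is a permutation $\sigma$. The reach problem is then trivial, because each summand is at most the sum: $N^{1/q}W_q(\mu_N,\nu_N)\le(\sum_n\rho_M(a_n,b_{\sigma(n)})^q)^{1/q}\le(\sum_n\rho_M(a_n,b_{\sigma(n)})^p)^{1/p}=N^{1/p}W_p(\mu_N,\nu_N)$.
\item The paper takes $b_n=a_n+U$ with $U$ uniform on the torus, or $b_n=Ua_n$ with $U$ Haar-distributed in $\mathrm{SO}(d+1)$. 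Then $\mathbb{E}\nu_N=\mathrm{Vol}$, and $W_p(\nu_N,\mathrm{Vol})=W_p(\mu_N,\mathrm{Vol})$ by isometry invariance.
\item The triangle inequality gives $N^{1/q}W_q(\mu_N,\nu_N)\le 2N^{1/p}W_p(\mu_N,\mathrm{Vol})$ for every realization. Convexity of the transport cost $W_q^q$ in its second argument then transfers this bound to $\mathrm{Vol}=\mathbb{E}\nu_N$.
\end{enumerate}
The factor $2$ comes from the triangle inequality, not from any geometry of transport cells.
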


\begin{proof} Let $1 \le p \le q < \infty$. The Wasserstein distance from $\mu_N$ to $\nu_N=N^{-1} \sum_{n=1}^N \delta_{b_n}$ with arbitrary points $b_1, b_2, \ldots, b_N \in M$ can be expressed as
\[ W_p (\mu_N, \nu_N) = \min_{\sigma \in S_N} \left( \frac{1}{N} \sum_{n=1}^N \rho_M (a_n, b_{\sigma(n)})^p \right)^{1/p} , \]
where the minimum is over all bijections $\sigma: \{ 1, 2, \ldots, N \} \to \{ 1, 2, \ldots, N \}$, see \cite[p.\ 5]{VI1}. Let $\sigma$ be the bijection for which the minimum is attained in the previous formula. An application of the general inequality $(\sum_{n=1}^N x_n)^{p/q} \le \sum_{n=1}^N x_n^{p/q}$, $x_n \ge 0$, with the exponent $0<p/q \le 1$ leads to
\begin{equation}\label{WqWpinequality}
N^{1/q} W_q(\mu_N, \nu_N) \le \left( \sum_{n=1}^N \rho_M (a_n, b_{\sigma(n)})^q \right)^{1/q} \le \left( \sum_{n=1}^N \rho_M (a_n, b_{\sigma(n)})^p \right)^{1/p} = N^{1/p} W_p (\mu_N, \nu_N) .
\end{equation}

If $M=\mathbb{R}^d / \mathbb{Z}^d$, then let $U$ be a uniformly distributed random variable in $\mathbb{R}^d/\mathbb{Z}^d$, and let $b_n=a_n+U$. If $M=\mathbb{S}^d$, then let $U$ be a uniformly distributed random variable in $\mathrm{SO}(d+1)$, and let $b_n=Ua_n$. In both cases $b_n$ is a uniformly distributed random variable in $M$ for each $1 \le n \le N$, hence $\mathbb{E} \nu_N = \mathrm{Vol}$. As the map $x \mapsto x+U$ on $\mathbb{R}^d/\mathbb{Z}^d$ resp.\ $x \mapsto Ux$ on $\mathbb{S}^d$ is an isometry that preserves the uniform measure $\mathrm{Vol}$, Lemma \ref{lipschitz_lemma} yields $W_p (\nu_N, \mathrm{Vol})=W_p(\mu_N, \mathrm{Vol})$. Using the convexity of the optimal cost \cite[Theorem 4.8]{VI2}, inequality \eqref{WqWpinequality} and the triangle inequality for the Wasserstein metric we thus deduce
\[ \begin{split} N^{1/q} W_q (\mu_N, \mathrm{Vol}) &= N^{1/q} W_q (\mu_N, \mathbb{E}\nu_N) \le \mathbb{E} (N^{1/q} W_q(\mu_N, \nu_N)) \le \mathbb{E} (N^{1/p} W_p (\mu_N, \nu_N)) \\ &\le \mathbb{E} (N^{1/p} W_p (\mu_N, \mathrm{Vol}) + N^{1/p} W_p(\nu_N, \mathrm{Vol})) = 2 N^{1/p} W_p (\mu_N, \mathrm{Vol}) . \end{split} \]
This finishes the proof in the case $1 \le p \le q < \infty$. Taking the limit as $q \to \infty$ establishes the claim for $q=\infty$. In particular, choosing $p=\log N$ and $q=\infty$ leads to $W_{\infty}(\mu_N, \mathrm{Vol}) \le 2 N^{1/\log N}W_{\log N}(\mu_N, \mathrm{Vol}) = 2 e W_{\log N}(\mu_N, \mathrm{Vol})$, as claimed.
\end{proof}

\begin{remark} Lemma \ref{WqWplemma} holds in any compact homogeneous space $M=G/K$ equipped with a $G$-invariant metric $\rho_{G/K}$ and a $G$-invariant probability measure $\mathrm{Vol}$. The proof then uses a random variable $U$ whose distribution is the normalized Haar measure on $G$.
\end{remark}

\subsection{Wasserstein metric and harmonic analysis}\label{smoothing_section}

The main tool in the present paper is an estimate for $W_p (\mu, \mathrm{Vol})$ in terms of the Fourier transform of $\mu$. In the torus, the Fourier transform of $\mu \in \mathcal{P}(\mathbb{R}^d/\mathbb{Z}^d)$ is defined as $\widehat{\mu}(k)=\int_{\mathbb{R}^d/\mathbb{Z}^d} e^{-2 \pi i \langle k, x \rangle} \, \mathrm{d}\mu(x)$, $k \in \mathbb{Z}^d$. The following two theorems are special cases of two smoothing inequalities due to the first author and Cuenin.
\begin{thm}[{\cite[Theorem 1]{BC}}]\label{torussmoothing_theorem1} Let $\mu \in \mathcal{P}(\mathbb{R}^d/\mathbb{Z}^d)$ and let $2 \le p < \infty$ and $1<q\le 2$ be such that $1/p+1/q=1$. For any integer $H>(p+2)/2$, we have
\[ W_p(\mu, \mathrm{Vol}) \le \frac{(p+4)d^{1/2}}{6^{1/2}(H-(p+2)/2)} + p \bigg( \sum_{\substack{k \in [-H,H]^d \\ k \neq 0}} \frac{|\widehat{\mu}(k)|^q}{(2 \pi |k|)^q} \bigg)^{1/q} . \]
\end{thm}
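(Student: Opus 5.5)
The plan is to split $\mu$ into a smooth part and a smoothing error. Fix a nonnegative trigonometric polynomial kernel $K_H$ on $\mathbb{R}^d/\mathbb{Z}^d$ of the product form $K_H(x)=\prod_{i=1}^d \kappa_H(x_i)$. Here $\kappa_H$ is a one-dimensional Jackson-type kernel: a suitably normalized power of the Fejér kernel, with $\int \kappa_H=1$, Fourier coefficients supported in $[-H,H]$, and $|\widehat{\kappa}_H(k)|\le 1$. Set $\nu=\mu * K_H$. The triangle inequality gives
\[ W_p(\mu,\mathrm{Vol}) \le W_p(\mu,\nu)+W_p(\nu,\mathrm{Vol}) , \]
and I treat the two terms separately.

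For the first term, couple $x\sim\mu$ with $x+Y$, where $Y\sim K_H$ is independent of $x$. This gives $W_p(\mu,\nu)\le (\mathbb{E}\|Y\|^p)^{1/p}$. Since the coordinates of $Y$ are i.i.d., $\|Y\|\le |Y|$ and Minkowski/Jensen in the coordinates reduce this to $d^{1/2}$ times a one-dimensional $p$-th moment of $\kappa_H$. The choice of the power of the Fejér kernel, roughly $(p+2)/2$ or the nearest integer, is exactly what makes the moment $\int \|x\|^p\kappa_H(x)\,\mathrm{d}x$ finite and bounded by $\big((p+4)/(6^{1/2}(H-(p+2)/2))\big)^p$. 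This explains the condition $H>(p+2)/2$. The computation will use $|\sin(\pi x)|\ge 2|x|$ and an explicit evaluation of the normalizing constant. The resulting term is the first summand of the claimed bound.

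For the second term, $\nu$ has a smooth density $f=1+g$ with $\widehat{g}(k)=\widehat{\mu}(k)\widehat{K_H}(k)$ for $k\ne 0$, supported in $[-H,H]^d$. I would prove a Peyre/Ledoux-type bound
\[ W_p(\nu,\mathrm{Vol})\le p\,\|\nabla \Delta^{-1} g\|_{L^p}, \]
using the Benamou--Brenier formulation along the linear interpolation $\rho_t=1+tg$. The velocity field is $v_t=\nabla\Delta^{-1}g/\rho_t$. Because $\rho_t\ge 1-t$ (as $f\ge0$), the action is at most $\int_0^1 (1-t)^{-(p-1)/p}\,\mathrm{d}t \,\|\nabla\Delta^{-1}g\|_{L^p}$, and this integral equals $p$. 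This step, with the correct bookkeeping of the lower density bound, is the main obstacle. I would first try the displacement argument on the interpolation path with $\mathrm{Vol}$ as the well-behaved endpoint.

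Finally, apply Hausdorff--Young on the torus ($q\le2$, $1/p+1/q=1$): $\|F\|_{L^p}\le\|\widehat F\|_{\ell^q}$. For $F=\nabla\Delta^{-1}g$ we have $|\widehat F(k)|=|\widehat g(k)|/(2\pi|k|)\le|\widehat\mu(k)|/(2\pi|k|)$, supported in $[-H,H]^d\setminus\{0\}$. This yields the second summand, and combining the two bounds completes the proof.
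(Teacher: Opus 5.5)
The paper gives no proof of this statement; it quotes \cite[Theorem 1]{BC}. Your architecture is the one visible in the paper's companion results: smoothing by a nonnegative kernel, then an interpolation bound, then Hausdorff--Young. Indeed, the factor $p(1-\delta^{1/p})/(1-\delta)$ in Theorem \ref{torussmoothing_theorem2} equals $\int_0^1((1-t)+t\delta)^{-(p-1)/p}\,\mathrm{d}t$. This is exactly your Benamou--Brenier estimate with the density floor $\rho_t\ge(1-t)+t\delta$, and $\delta=0$ gives your factor $p$; for $p=2$ it is the Ledoux constant $2(1-\sqrt{1-c})/c$ quoted in Lemma \ref{ledoux_lemma}.

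Your treatment of the second summand is complete. The step you flag as the main obstacle is routine: for $t<1$ the field $v_t$ is smooth, so its flow pushes $\mathrm{Vol}$ to $\rho_t$. Minkowski's integral inequality then gives $W_p(\mathrm{Vol},\rho_s)\le\int_0^s\|v_t\|_{L^p(\rho_t)}\,\mathrm{d}t$, and you let $s\to1$. You should also note that Hausdorff--Young holds with constant $1$ for $\mathbb{C}^d$-valued functions, since $\nabla\Delta^{-1}g$ is vector valued.

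The gap is the first summand: you assert the explicit moment bound instead of deriving it, and two points of your description are off.
\begin{enumerate}[(i)]
\item Finiteness is not the issue, since every moment of a trigonometric polynomial is finite. What you need is a bound $\ll 1/m$ with an explicit constant.
\item The power must be the ceiling, not the nearest integer. With $\kappa\propto D_m^{2r}$, where $D_m(x)=\sin(\pi m x)/\sin(\pi x)$, take $p=3$: rounding $5/2$ down to $r=2$ gives $2r=p+1$, and the $p$-th moment is then of order $m^{-p}\log m$.
\end{enumerate}

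A choice that closes the gap is as follows.
\begin{enumerate}[(a)]
\item Take $r=\lceil (p+2)/2\rceil$, so that $2r-2\ge p$ and $r<(p+4)/2$. Take $m=\lfloor H/r\rfloor+1\ge 2$, so that the degree satisfies $r(m-1)\le H$ and $1/m\le r/(H+1)$.
\item By $|x|\le|\sin\pi x|/2$ and Jensen, $\int|x|^p\kappa\le 2^{-p}\bigl(\int\sin^{2r-2}(\pi x)\,\kappa\bigr)^{p/(2r-2)}$.
\item The numerator is easy: $\int\sin^{2r-2}(\pi x)D_m^{2r}\,\mathrm{d}x=\int\sin^{2r}(\pi m x)/\sin^2(\pi x)\,\mathrm{d}x\le m$.
\item For the normalizing constant, $D_m(x)\ge m(1-\pi^2m^2x^2/6)$ on $0\le x\le\sqrt6/(\pi m)$, from $\sin y\ge y-y^3/6$; this is where $\sqrt6$ enters. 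It gives $\int D_m^{2r}\ge \frac{2\sqrt6}{\pi}m^{2r-1}\int_0^1(1-s^2)^{2r}\,\mathrm{d}s$.
\item Combining, one checks $(\int|x|^p\kappa)^{1/p}\le \frac{2}{\sqrt6\,m}$ for all $r\ge2$.
\item With $1/m\le r/(H+1)<\frac{(p+4)/2}{H-(p+2)/2}$ this is exactly $\frac{p+4}{6^{1/2}(H-(p+2)/2)}$. Minkowski in $L^{p/2}$, which is valid since $p\ge2$, then supplies the factor $d^{1/2}$.
\end{enumerate}
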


\begin{thm}[{\cite[Theorem 2]{BC}}]\label{torussmoothing_theorem2} Let $\mu \in \mathcal{P}(\mathbb{R}^d/\mathbb{Z}^d)$ and assume there exist constants $b \ge 0$ and $r>0$ such that $\mu (B(x,r)) \ge b$ for all $x \in \mathbb{R}^d/\mathbb{Z}^d$. Let $2 \le p < \infty$ and $1< q \le 2$ be such that $1/p+1/q=1$. For any real $t>0$, we have
\[ W_p (\mu, \mathrm{Vol}) \le (2d+p)^{1/2} t^{1/2} + \frac{p(1-\delta^{1/p})}{1-\delta} \bigg( \sum_{\substack{k \in \mathbb{Z}^d \\ k \neq 0}} e^{-4 \pi^2 |k|^2 q t} \frac{|\widehat{\mu}(k)|^q}{(2 \pi |k|)^q} \bigg)^{1/q}, \]
where
\[ \delta = \min \left\{ b \frac{e^{-r^2/(4t)}}{(4 \pi t)^{d/2}}, 1 \right\} . \]
\end{thm}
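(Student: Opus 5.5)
The plan is the heat-kernel smoothing argument of \cite{BC}: split $W_p(\mu,\mathrm{Vol})$ through the intermediate measure $\mu_t=\mu * G_t$, where $G_t(x)=\sum_{m \in \mathbb{Z}^d} (4\pi t)^{-d/2} e^{-|x+m|^2/(4t)}$ is the periodized heat kernel at time $t$. The triangle inequality gives
\[ W_p(\mu,\mathrm{Vol}) \le W_p(\mu,\mu_t) + W_p(\mu_t,\mathrm{Vol}) , \]
and the two terms produce the two terms of the claimed bound.

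\emph{Smoothing error.} To bound $W_p(\mu,\mu_t)$, use the coupling $X \mapsto X+\sqrt{2t}\,Z$ with $X \sim \mu$ and $Z$ a standard Gaussian in $\mathbb{R}^d$ independent of $X$, projected to the torus. Since the torus distance is at most the Euclidean length of the displacement,
\[ W_p(\mu,\mu_t) \le (2t)^{1/2} (\mathbb{E}|Z|^p)^{1/p} . \]
A standard bound on Gaussian moments, for instance $(\mathbb{E}|Z|^p)^{2/p} \le d+p$ via the chi-distribution moment formula and log-convexity, then gives the term $(2d+p)^{1/2} t^{1/2}$.

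\emph{Lower bound on the smoothed density.} The measure $\mu_t$ has density $f_t(x)=\int G_t(x-y)\,\mathrm{d}\mu(y)$. On $B(x,r)$ we have $G_t(x-y) \ge (4\pi t)^{-d/2} e^{-r^2/(4t)}$, and $\mu(B(x,r)) \ge b$ by assumption. Hence $f_t \ge b\,e^{-r^2/(4t)}(4\pi t)^{-d/2}$. Since $\int f_t=1$, some point has $f_t \le 1$, so we may also cap the bound at $1$, giving $f_t \ge \delta$ everywhere.

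\emph{Transport cost, the main obstacle.} The hardest step is to bound $W_p(\mu_t,\mathrm{Vol})$ with exactly the factor $p(1-\delta^{1/p})/(1-\delta)$. I would use a Benamou--Brenier / Peyre-type argument along the linear interpolation $\rho_s=(1-s)f_t+s$, $s\in[0,1]$. Let $u$ solve $\Delta u = f_t-1$. Then $\partial_s \rho_s + \nabla\cdot(\rho_s v_s)=0$ with $v_s=\nabla u/\rho_s$. This yields
\[ W_p(\mu_t,\mathrm{Vol}) \le \int_0^1 \Big(\int |\nabla u|^p \rho_s^{1-p}\Big)^{1/p} \mathrm{d}s . \]
Using $\rho_s \ge (1-s)\delta+s$, the right-hand side is at most
\[ \|\nabla u\|_{L^p}\int_0^1 ((1-s)\delta+s)^{1/p-1}\,\mathrm{d}s = \frac{p(1-\delta^{1/p})}{1-\delta}\|\nabla u\|_{L^p} . \]
Care is needed to justify the Benamou--Brenier bound rigorously for a smooth positive density, but this is standard.

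\emph{Fourier side.} It remains to bound $\|\nabla u\|_{L^p}$ with $p\ge 2$. Apply the Hausdorff--Young inequality on the torus to the vector field $\nabla u$, whose Fourier coefficients are
\[ \widehat{\nabla u}(k) = \frac{-ik}{2\pi|k|^2}\, e^{-4\pi^2|k|^2 t}\,\widehat{\mu}(k), \qquad k\neq 0 . \]
This gives $\|\nabla u\|_{L^p} \le \big(\sum_{k\neq0} e^{-4\pi^2|k|^2 qt}\,|\widehat\mu(k)|^q (2\pi|k|)^{-q}\big)^{1/q}$. A vector-valued Hausdorff--Young with constant $1$ holds for Hilbert-space-valued functions, or one can handle the coordinates of $\nabla u$ with a little care. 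Combining the smoothing error, the transport bound and this Fourier bound gives the stated inequality.
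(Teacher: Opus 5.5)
Your proposal is correct, and it is essentially the argument behind \cite[Theorem 2]{BC}, which the paper quotes without proof. That argument consists of heat-kernel smoothing with the triangle inequality, a Peyre/Ledoux-type bound $W_p(\mu_t,\mathrm{Vol})\le \frac{p(1-\delta^{1/p})}{1-\delta}\|\nabla \Delta^{-1}(f_t-1)\|_{L^p}$ from the linear interpolation, and Hausdorff--Young. The interpolation factor is the same one that appears as $\frac{2(1-\sqrt{1-c})}{c}$, credited to \cite[Lemma 13]{BC}, in the proof of Lemma \ref{ledoux_lemma}.

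One constant needs fixing. The moment bound $(\mathbb{E}|Z|^p)^{2/p}\le d+p$ only yields $(2t)^{1/2}(d+p)^{1/2}=(2d+2p)^{1/2}t^{1/2}$, not the stated $(2d+p)^{1/2}t^{1/2}$. You need the sharper $(\mathbb{E}|Z|^p)^{2/p}\le d+p/2$. This follows from $\mathbb{E}|Z|^p=2^{p/2}\Gamma(\frac{d+p}{2})/\Gamma(\frac{d}{2})$ and the bound $\Gamma(x+a)/\Gamma(x)\le (x+a/2)^a$, applied with $x=d/2$, $a=p/2$. That gamma bound comes from AM--GM when $a$ is an integer. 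In general it follows from concavity of the digamma function $\psi$, which gives $\int_x^{x+a}\psi\le a\,\psi(x+a/2)$, together with $\psi(y)<\log y$.

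Two minor points:
\begin{itemize}
\item The lower bound $f_t\ge\delta$ follows directly from $\min\{A,1\}\le A$. Your remark that some point has $f_t\le 1$ is not needed.
\item The flow argument works even when $b=0$, i.e.\ $\delta=0$. This is because $f_t$ is smooth and strictly positive, the heat kernel being positive, so $v_s=\nabla u/\rho_s$ is smooth and bounded uniformly for all $s\in[0,1]$.
\end{itemize}
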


The eigenvalues of the Laplace--Beltrami operator $\Delta$ on the sphere $\mathbb{S}^d$ are $-\lambda_{\ell}=-\ell (\ell+d-1)$, $\ell \ge 0$, and $-\lambda_{\ell}$ has multiplicity $d_{\ell}=\binom{\ell+d}{d}-\binom{\ell+d-2}{d}$. Let $Y_{\ell}^m$, $1 \le m \le d_{\ell}$, be an arbitrary orthonormal basis in the eigenspace $\{ \phi \in L^2 (\mathbb{S}^2, \mathrm{Vol}): \Delta \phi = -\lambda_{\ell} \phi  \}$, for example the standard spherical harmonics. The Fourier transform of $\mu \in \mathcal{P}(\mathbb{S}^d)$ is then defined as $\widehat{\mu}(\ell, m) = \int_{\mathbb{S}^d} \overline{Y_{\ell}^m} \, \mathrm{d}\mu$, $\ell \ge 0$, $1 \le m \le d_{\ell}$, where the overline denotes the complex conjugate. The following special case of a smoothing inequality due to the first author and Cuenin is the analogue of Theorem \ref{torussmoothing_theorem1} on the sphere.

\begin{thm}[{\cite[Theorem 7]{BC}}]\label{spheresmoothing_theorem} Let $\mu \in \mathcal{P}(\mathbb{S}^d)$ and let $2 \le p < \infty$ and $1<q \le 2$ be such that $1/p+1/q=1$. For any integer $L \ge 1$, we have
\[ W_p (\mu, \mathrm{Vol}) \le 8d12^{1/p}e^{d/p}\left( \frac{p+5}{2} \right)^{1+d/p} \frac{1}{L} + 2(p-1)p \bigg( \sum_{\ell=1}^L \frac{d_{\ell}^{1-q/2}}{\lambda_{\ell}^{q/2}} \bigg( \sum_{m=1}^{d_{\ell}} |\widehat{\mu}(\ell,m)|^2 \bigg)^{q/2} \bigg)^{1/q} . \]
\end{thm}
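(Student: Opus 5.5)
The plan is to follow the smoothing strategy of \cite{BC}: convolve $\mu$ with a nonnegative zonal polynomial kernel of degree at most $L$ and use the triangle inequality
\[ W_p(\mu,\mathrm{Vol}) \le W_p(\mu, \mu * K_L) + W_p(\mu*K_L, \mathrm{Vol}), \]
where $(\mu*K_L)(x)=\int_{\mathbb{S}^d} K_L(\langle x,y\rangle)\,\mathrm{d}\mu(y)$. I choose $K_L$ to be a normalized Jackson-type kernel, a suitable power (of size roughly $(p+5)/2$) of a Fej\'er-type kernel in $\langle x,y\rangle$. Its rescaled degree is then at most $L$, it is nonnegative, and it has total mass $1$. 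Then $\mu*K_L$ is a probability density whose spherical harmonic coefficients are $\widehat{\mu}(\ell,m)\,\kappa_\ell$ for $\ell\le L$ and vanish for $\ell>L$. By Funk--Hecke, the multipliers $\kappa_\ell$ are the Gegenbauer coefficients of $K_L$, and they satisfy $|\kappa_\ell|\le 1$.

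\emph{First term.} Couple $\mu$ with $\mu*K_L$ by moving each point $y$ according to the probability density $K_L(\langle \cdot, y\rangle)$. Then $W_p(\mu,\mu*K_L)^p\le \sup_y\int \rho_{\mathbb{S}^d}(x,y)^p K_L(\langle x,y\rangle)\,\mathrm{d}\mathrm{Vol}(x)$. This reduces to a one-dimensional integral in the polar angle $\theta$ with weight $\sin^{d-1}\theta$. The standard Jackson estimates (a high power of $\sin(n\theta/2)/\sin(\theta/2)$ normalised by its mass) give a $p$-th moment of order $(C(p+5)/L)^p$. The constant $8d\,12^{1/p}e^{d/p}((p+5)/2)^{1+d/p}$ comes from bounding the normalising mass from below near $\theta\approx 0$, together with Stirling-type bounds for the tail integral.

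\emph{Second term.} Here I use the Ledoux/Peyre-type inequality $W_p(f\,\mathrm{dVol},\mathrm{Vol})\le p\,\|\nabla \Delta^{-1}(f-1)\|_{L^p(\mathrm{Vol})}$, which is obtained from the Benamou--Brenier formula applied to the linear interpolation $f_s=(1-s)+sf$. With $f=\mu*K_L$, the right-hand side equals the $L^p$ norm of $\sum_{\ell=1}^L \lambda_\ell^{-1}\kappa_\ell\sum_m\widehat{\mu}(\ell,m)\nabla Y_\ell^m$. To pass to the Fourier side I use Riesz--Thorin interpolation, applied degree by degree to the operator $(a_{\ell,m})\mapsto\sum_\ell\lambda_\ell^{-1/2}\sum_m a_{\ell,m}\nabla Y_\ell^m$, between two endpoints:
\begin{itemize}
\item $\ell^2\to L^2$: by Green's formula, $\|\sum_m a_m\nabla Y_\ell^m\|_2^2=\lambda_\ell\sum_m|a_m|^2$, and different degrees are orthogonal.
\item $\ell^1\to L^\infty$: the addition theorem gives $\sup_x|\sum_m a_m\nabla Y_\ell^m(x)|\le \sqrt{\lambda_\ell d_\ell}\,(\sum_m|a_m|^2)^{1/2}$.
\end{itemize}
Interpolating these produces the weights $d_\ell^{1-q/2}\lambda_\ell^{-q/2}(\sum_m|\widehat{\mu}(\ell,m)|^2)^{q/2}$, with the inner $\ell^2$ norm kept intact since it is a mixed-norm interpolation. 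The vector-valued nature of $\nabla$ and the absorption of $\kappa_\ell$ cost at most the extra factor $2(p-1)$.

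\emph{Main obstacle.} I expect the hardest step to be getting an explicit constant in the moment bound for the Jackson kernel, uniformly in $p$ and $d$. The difficulty is choosing the power of the kernel as a function of $p$ so that the degree stays $\le L$ while the $p$-th moment is still $O(p/L)$. I would first try the power $\lceil (p+5)/2\rceil$ of the Fej\'er kernel of degree $\approx L/\lceil (p+5)/2\rceil$. I would then estimate $\int_0^\pi\theta^{p}\,(\sin(n\theta/2)/(n\sin(\theta/2)))^{2k}\sin^{d-1}\theta\,\mathrm{d}\theta$ by splitting the integral at $\theta=1/n$.
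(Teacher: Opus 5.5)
The paper does not prove this statement; it quotes it from \cite[Theorem 7]{BC}, so your plan has to stand on its own. Most of it does:
\begin{itemize}
\item The triangle inequality through $\mu*K_L$ and the Markov-kernel coupling for $W_p(\mu,\mu*K_L)$ are fine.
\item The bound $W_p(f\,\mathrm{Vol},\mathrm{Vol})\le p\|\nabla\Delta^{-1}(f-1)\|_{L^p}$ is fine. Integrate the metric speed $\int_0^1(\int|\nabla\Delta^{-1}(f-1)|^p f_s^{1-p}\,\mathrm{d}\mathrm{Vol})^{1/p}\,\mathrm{d}s$ and use $f_s\ge 1-s$ and $\int_0^1(1-s)^{1/p-1}\,\mathrm{d}s=p$. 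The action form $\int_0^1\int\cdots\,\mathrm{d}s$ would diverge for $p\ge 2$.
\item The interpolation works if you do it once for the whole operator, not degree by degree followed by the triangle inequality (which only gives an $\ell^1$ sum over $\ell$). The endpoints are the $\ell^2\to L^2$ bound and the bound by $\sum_\ell d_\ell^{1/2}(\sum_m|a_{\ell,m}|^2)^{1/2}$ into $L^\infty$, which comes from $\sum_m|\nabla Y_\ell^m|^2=\lambda_\ell d_\ell$.
\item Complex interpolation of these weighted mixed-norm spaces, and of $L^r(\mathbb{S}^d;\mathbb{C}^{d+1})$, is isometric. With $|\kappa_\ell|\le 1$, your second term therefore comes with the factor $p$, not $2(p-1)p$. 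Nothing is lost to the vector-valued target, and no Riesz transform bound is needed.
\end{itemize}

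The gap is the kernel in the first term. You raise $F_n(\theta)=\bigl(\sin(n\theta/2)/(n\sin(\theta/2))\bigr)^2$ to a power $k\approx(p+5)/2$ that does not depend on $d$. Then the weight $\sin^{d-1}\theta$ overwhelms the decay $F_n^k\approx(n\theta)^{-2k}$:
\begin{itemize}
\item The normalising mass $\int_0^\pi F_n^k\sin^{d-1}\theta\,\mathrm{d}\theta$ is $\asymp n^{-d}$ only if $2k>d$. For $2k<d$ it is $\asymp n^{-2k}$ and sits at $\theta\asymp1$, so the kernel does not concentrate at all.
\item The normalised $p$-th moment is $\asymp n^{-p}$ only if $2k>d+p$. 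For $d<2k<d+p$ it is $\asymp n^{d-2k}$.
\end{itemize}
Since $W_p(\delta_y,\delta_y*K_L)^p$ equals this moment exactly (the only coupling with a Dirac mass is the product), your first term is not $O(1/L)$. For example, with $p=2$ it fails as soon as $d\ge 6$. So the claim that the Jackson estimates give a moment of order $(C(p+5)/L)^p$ is false in high dimension.

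The fix is to let the power depend on $d$. Take $k=\lceil (p+d+1)/2\rceil$ and $n=\lfloor L/k\rfloor+1$; if $L<k$ the claim is trivial because $W_p\le\pi$. Use $F_n\ge(2/\pi)^2$ and $\sin\theta\ge 2\theta/\pi$ on $[0,\pi/n]$, and $F_n\le\min\{1,(\pi/(n\theta))^2\}$ and $\sin\theta\le\theta$ on $[0,\pi]$. Your split at $\theta=\pi/n$ then gives
\[ W_p(\mu,\mu*K_L)\le \frac{\pi}{n}\Bigl[d\Bigl(\frac{\pi}{2}\Bigr)^{2k+d-1}\Bigl(\frac{1}{p+d}+\frac{1}{2k-p-d}\Bigr)\Bigr]^{1/p}. \]
With $1/n<k/L\le(p+d+2)/(2L)$, one checks that this is at most $8d\,12^{1/p}e^{d/p}((p+5)/2)^{1+d/p}/L$. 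With this change, and with this explicit computation actually carried out (your proposal only gestures at it), the argument proves the theorem.
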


Finding a lower bound for the quadratic Wasserstein metric $W_2$ relies on the following lemma due to Ledoux. We mention that it actually applies to all compact, connected Riemannian manifolds without boundary with a positive semidefinite Ricci curvature.
\begin{lem}\label{ledoux_lemma}  Let $M=\mathbb{R}^d / \mathbb{Z}^d$ or $\mathbb{S}^d$, and let $\mu \in \mathcal{P}(M)$ be absolutely continuous with density $\frac{\mathrm{d}\mu}{\mathrm{d}\mathrm{Vol}}=1+f$, where $f$ is a smooth function such that $\sup_{x \in M}|f(x)|\le c$ with some constant $0<c \le 1$. Then
\[ W_2 (\mu, \mathrm{Vol}) = \left( 1+O(c) \right) \left( \int_M f (\Delta^{-1} f) \, \mathrm{d}\mathrm{Vol} \right)^{1/2} \]
with a universal implied constant.
\end{lem}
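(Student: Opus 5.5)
The plan is to compare $W_2(\mu,\mathrm{Vol})$ with the homogeneous negative Sobolev norm $\|f\|_{\dot H^{-1}}^2=\int_M f\,(-\Delta)^{-1}f\,\mathrm{d}\mathrm{Vol}=\int_M|\nabla u|^2\,\mathrm{d}\mathrm{Vol}$. Here $u$ is the solution of $-\Delta u=f$ with mean zero. It exists because $\int_M f\,\mathrm{d}\mathrm{Vol}=0$, since $\mu$ is a probability measure. I read the expression in the statement with this sign convention, so that it is nonnegative. Both inequalities will come from the Benamou--Brenier dynamic formulation
\[ W_2(\mu,\mathrm{Vol})^2=\inf\left\{\int_0^1\int_M |v_t|^2\,\rho_t\,\mathrm{d}\mathrm{Vol}\,\mathrm{d}t : \partial_t\rho_t+\mathrm{div}(\rho_t v_t)=0,\ \rho_0=1,\ \rho_1=1+f\right\}. \]
In both directions the key input is that every density that appears stays in $[1-c,1+c]$.

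\emph{Upper bound.} I take the linear interpolation $\rho_t=1+tf$, which satisfies $\rho_t\ge 1-c$. Choose $v_t=-\nabla u/\rho_t$; then $\mathrm{div}(\rho_t v_t)=\Delta u=-f=-\partial_t\rho_t$, so the continuity equation holds. The cost is $\int_0^1\int_M|\nabla u|^2/\rho_t\le (1-c)^{-1}\|f\|_{\dot H^{-1}}^2$. This gives $W_2\le(1+O(c))\|f\|_{\dot H^{-1}}$. If $c$ is close to $1$, the trivial bound $W_2\ll 1$ together with $\|f\|_{\dot H^{-1}}$ not being tiny would need care, so I would interpret the statement as meaningful for $c\le 1/2$ and absorb the rest into the constant.

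\emph{Lower bound.} Let $(\rho_t)$ be the Wasserstein geodesic (displacement interpolation) from $\mathrm{Vol}$ to $\mu$, with velocity field $v_t$ realizing the cost $W_2^2$, so that $\int_M|v_t|^2\rho_t=W_2^2$ for each $t$. For the smooth test function $u$,
\[ \int_M u f\,\mathrm{d}\mathrm{Vol}=\int_0^1\int_M\langle\nabla u,v_t\rangle\rho_t\,\mathrm{d}\mathrm{Vol}\,\mathrm{d}t\le \int_0^1\Big(\int_M|\nabla u|^2\rho_t\Big)^{1/2}W_2\,\mathrm{d}t . \]
The left-hand side equals $\|f\|_{\dot H^{-1}}^2$. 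If I know that $\sup\rho_t\le 1+c$ for all $t$, this yields $\|f\|_{\dot H^{-1}}^2\le(1+c)^{1/2}\|f\|_{\dot H^{-1}}W_2$, hence $W_2\ge(1+O(c))\|f\|_{\dot H^{-1}}$. Combining the two bounds proves the lemma.

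\emph{Main obstacle.} The nontrivial step is the uniform bound $\|\rho_t\|_\infty\le\max(\|\rho_0\|_\infty,\|\rho_1\|_\infty)=1+c$ along the displacement interpolation. This is exactly where nonnegative Ricci curvature enters. For $\mathrm{Ric}\ge0$, the functional $-\int\rho^{1-1/N}$ is displacement convex for every $N\ge d$ (Lott--Sturm--Villani); letting $N\to\infty$, or equivalently using the $L^\infty$ bounds in Villani's book, gives the maximum principle for the density along the geodesic. I would cite this rather than reprove it. Alternatively, it can be proved directly on the torus, where the flat Monge--Amp\`ere Jacobian of $x\mapsto\exp_x(t\nabla\varphi)$ is $\det((1-t)I+tA)$. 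That determinant is bounded below by $\det(A)^t$ via concavity of $\log\det$, which bounds $\rho_t$ by $\rho_0^{1-t}\rho_1^{t}$ evaluated along trajectories. On the sphere the curvature terms in the Jacobian only help, since the distortion coefficients are $\ge1$ when $\mathrm{Ric}\ge0$. The remaining justifications, such as the regularity needed to integrate by parts, are routine because $f$ is smooth and the optimal map is smooth by Caffarelli-type regularity for smooth positive densities.
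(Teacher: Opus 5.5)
Your route differs from the paper's, which contains no argument of its own. It quotes Ledoux's unpublished two-sided estimate
\[ \sqrt{2-\tfrac{e^c-1}{c}}\,\|f\|_{\dot H^{-1}} \le W_2(\mu,\mathrm{Vol}) \le \tfrac{2(1-\sqrt{1-c})}{c}\,\|f\|_{\dot H^{-1}}, \]
together with the Borda--Cuenin upper bound (Lemma 13) and Peyre's lower bound (Theorem 2.5), and only observes that these constants are $1+O(c)$. Your Benamou--Brenier derivation is self-contained and shows where nonnegative Ricci curvature enters: only in the lower bound, through $\rho_t\le 1+c$ along the displacement interpolation. Your lower constant $(1+c)^{-1/2}$ is weaker than Ledoux's but still $1+O(c)$, and that half of the proof is sound. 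Your reading of $\Delta^{-1}$ as $(-\Delta)^{-1}$ matches how the paper uses it.

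There is a real gap in the upper bound for $c$ near $1$, and your proposed remedy does not close it. For $c\in(1/2,1]$ the statement asserts $W_2\le C\|f\|_{\dot H^{-1}}$ with a universal $C$. Since $\|f\|_{\dot H^{-1}}$ can be arbitrarily small while $\sup|f|=1$ (take $f$ highly oscillating), neither $W_2\ll 1$ nor ``absorbing into the constant'' helps. Your action bound $(1-c)^{-1}$, and even the sharper $\int_0^1(1-tc)^{-1}\,\mathrm{d}t=-\log(1-c)/c$, blows up as $c\to 1$. If $c=1$ and $f=-1$ on an open set, the action of the linearly parametrized path is actually infinite. The missing step is to bound $W_2$ by the \emph{length} of the linear path rather than the square root of its action (equivalently, reparametrize time, or chain the triangle inequality along a subdivision of $[0,1]$):
\[ W_2(\mu,\mathrm{Vol}) \le \int_0^1 \bigg( \int_M \frac{|\nabla u|^2}{1+sf}\,\mathrm{d}\mathrm{Vol} \bigg)^{1/2} \mathrm{d}s \le \|f\|_{\dot H^{-1}} \int_0^1 \frac{\mathrm{d}s}{\sqrt{1-sc}} = \frac{2(1-\sqrt{1-c})}{c}\,\|f\|_{\dot H^{-1}} . \]
This is at most $2\|f\|_{\dot H^{-1}}$ and is exactly the upper constant the paper quotes. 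In the paper's only application, $c\ll q^{-1/4}$ and only the lower bound is used, so your version suffices there.

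Smaller corrections:
\begin{itemize}
\item The velocity must be $v_t=+\nabla u/\rho_t$. With your sign, $\mathrm{div}(\rho_t v_t)=-\Delta u=f$, so $\partial_t\rho_t+\mathrm{div}(\rho_t v_t)=2f$.
\item ``Letting $N\to\infty$'' is the wrong mechanism for the maximum principle: that limit gives displacement convexity of the entropy, which yields no $L^\infty$ bound. What you need is the finite-dimensional pointwise inequality $\rho_t(T_t x)^{-1/d}\ge (1-t)\rho_0(x)^{-1/d}+t\rho_1(Tx)^{-1/d}$, i.e.\ concavity of $t\mapsto(\det DT_t)^{1/d}$ under $\mathrm{Ric}\ge 0$ (Cordero-Erausquin--McCann--Schmuckenschl\"ager). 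Alternatively, use displacement convexity of $\int\rho^m$ for all $m\ge 1$ and let $m\to\infty$. Your log-det argument on the torus and the distortion-coefficient remark on the sphere are correct forms of this.
\item The lower bound needs no regularity of the optimal map. Write $\int u\,\mathrm{d}\mu-\int u\,\mathrm{d}\mathrm{Vol}$ as a time integral of $\langle\nabla u,\dot\gamma\rangle$ along the transport geodesics and apply Cauchy--Schwarz. This uses only the density bound and also covers $c=1$, where smooth-map regularity is not available.
\end{itemize}
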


\begin{proof} In an unpublished sequel to \cite{LE}, Ledoux showed that
\[ \sqrt{2-\frac{e^c-1}{c}} \left( \int_M f (\Delta^{-1} f) \, \mathrm{d}\mathrm{Vol} \right)^{1/2} \le W_2 (\mu, \mathrm{Vol}) \le \frac{2(1-\sqrt{1-c})}{c} \left( \int_M f (\Delta^{-1} f) \, \mathrm{d}\mathrm{Vol} \right)^{1/2}, \]
where by Green's formula,
\[ \begin{split} \left( \int_M f (\Delta^{-1} f) \, \mathrm{d}\mathrm{Vol} \right)^{1/2} &= \left( \int_M |\nabla (\Delta^{-1} f)|^2 \, \mathrm{d}\mathrm{Vol} \right)^{1/2} \\ &= \sup \left\{ \left| \int_M \langle \nabla (\Delta^{-1} f), \nabla g \rangle \, \mathrm{d}\mathrm{Vol} \right| : \int_M |\nabla g|^2 \, \mathrm{d}\mathrm{Vol} \le 1 \right\} \\ &= \sup \left\{ \left| \int_M f g \, \mathrm{d} \mathrm{Vol} \right| : \int_M |\nabla g|^2 \, \mathrm{d}\mathrm{Vol} \le 1 \right\} \\ &= \sup \left\{ \left| \int_M g \, \mathrm{d}(\mu-\mathrm{Vol}) \right| : \int_M |\nabla g|^2 \, \mathrm{d}\mathrm{Vol} \le 1 \right\} \\ &= \| \mu - \mathrm{Vol} \|_{\dot{H}_{-1}} \end{split} \]
is in fact a certain dual Sobolev norm of the signed measure $\mu - \mathrm{Vol}$. The same upper bound in a more general form appeared also in \cite[Lemma 13]{BC}, whereas the lower bound with a different constant
\[ \frac{\log (1+c)}{2 (\sqrt{1+c}-1)} \int_M f (\Delta^{-1} f) \, \mathrm{d}\mathrm{Vol} \le W_2 (\mu, \mathrm{Vol}) \]
was proved earlier in \cite[Theorem 2.5]{PE}. All three constants
\[ \sqrt{2-\frac{e^c-1}{c}}, \quad \frac{\log (1+c)}{2 (\sqrt{1+c}-1)}, \quad \frac{2(1-\sqrt{1-c})}{c} \]
are $1+O(c)$ for $0<c \le 1$, which proves the claim.
\end{proof}

\subsection{$L^2$ discrepancy and harmonic analysis}\label{D2harmonic_section}

It is well known that the $L^2$ discrepancy with respect to balls in the torus can be expressed in terms of the Fourier transform. We recall the proof for the convenience of the reader. Throughout, $J_{\alpha}$ denotes the Bessel function of the first kind and $\Gamma$ is the gamma function.

\begin{lem}\label{D2torus_lemma} Let $\mu \in \mathcal{P}(\mathbb{R}^d / \mathbb{Z}^d)$. For any $0<r \le 1/2$, we have
\[ D_2(\mu, \mathrm{Vol}, r) = \bigg( \sum_{\substack{k \in \mathbb{Z}^d \\ k \neq 0}} b_k(r) |\widehat{\mu}(k)|^2 \bigg)^{1/2} \quad \textrm{and} \quad D_2(\mu, \mathrm{Vol}) = \bigg( \sum_{\substack{k \in \mathbb{Z}^d \\ k \neq 0}} b_k |\widehat{\mu}(k)|^2 \bigg)^{1/2} \]
with coefficients
\[ \begin{split} b_k(r) &=\frac{r^d}{|k|^d} J_{d/2}(2 \pi r |k|)^2 \ll \min \left\{ r^{2d}, \frac{r^{d-1}}{|k|^{d+1}} \right\}, \\ b_k &= 2 \int_0^{1/2} b_k(r) \, \mathrm{d}r = \frac{1}{d 2^d \pi^2 |k|^{d+1}} + O \left( \frac{1}{|k|^{d+2}} \right) . \end{split} \]
The total sums of the coefficients are
\[ \begin{split} \sum_{\substack{k \in \mathbb{Z}^d \\ k \neq 0}} b_k(r) &= \frac{\pi^{d/2}r^d}{\Gamma \left( \frac{d}{2}+1 \right)} \left( 1- \frac{\pi^{d/2}r^d}{\Gamma \left( \frac{d}{2}+1 \right)} \right), \\ \sum_{\substack{k \in \mathbb{Z}^d \\ k \neq 0}} b_k &= \frac{\pi^{d/2}}{(d+1)2^d \Gamma \left( \frac{d}{2}+1 \right)} - \frac{\pi^d}{(2d+1)2^{2d} \Gamma \left( \frac{d}{2} +1 \right)^2} . \end{split} \]
\end{lem}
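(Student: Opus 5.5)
The plan is to write $\mu(B(x,r))-\mathrm{Vol}(B(x,r))$ as the convolution $(\mu-\mathrm{Vol})*\mathds{1}_{B(0,r)}$ evaluated at $x$, and then apply Parseval. The set $B(0,r)\subset\mathbb{R}^d/\mathbb{Z}^d$ is a genuine Euclidean ball for $0<r\le 1/2$, so its indicator has Fourier coefficients
\[ \widehat{\mathds{1}_{B(0,r)}}(k)=\int_{|x|<r}e^{-2\pi i\langle k,x\rangle}\,\mathrm{d}x=\frac{r^{d/2}}{|k|^{d/2}}J_{d/2}(2\pi r|k|), \qquad k\neq 0, \]
by the classical formula for the Fourier transform of a ball. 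Its zeroth coefficient is $\mathrm{Vol}(B(0,r))=\pi^{d/2}r^d/\Gamma(d/2+1)$. Since $\widehat{\mu-\mathrm{Vol}}(0)=0$ and $\widehat{\mu-\mathrm{Vol}}(k)=\widehat{\mu}(k)$ for $k\ne0$, Parseval gives
\[ D_2(\mu,\mathrm{Vol},r)^2=\sum_{k\neq0}|\widehat{\mathds{1}_{B(0,r)}}(k)|^2|\widehat{\mu}(k)|^2=\sum_{k\ne0}b_k(r)|\widehat\mu(k)|^2. \]
To be safe, I would justify Parseval for a measure by noting that the convolution is a bounded function in $L^2$, whose Fourier coefficients are $\widehat\mu(k)\widehat{\mathds{1}_{B}}(k)$. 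Integrating in $r$ with weight $2$ over $[0,1/2]$ then gives the formula for $D_2(\mu,\mathrm{Vol})$, with Tonelli justifying the interchange of sum and integral.

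For the coefficient bounds I would use $|J_{d/2}(z)|\ll \min\{z^{d/2},z^{-1/2}\}$. This gives $b_k(r)\ll r^d|k|^{-d}\min\{(r|k|)^d,(r|k|)^{-1}\}$, which is the stated minimum. For the asymptotics of $b_k$, I would substitute $z=2\pi r|k|$ to get
\[ b_k=\frac{2}{(2\pi)^{d+1}|k|^{d+1}}\int_0^{\pi|k|}z^dJ_{d/2}(z)^2\,\mathrm{d}z\cdot\frac{(2\pi)^d}{|k|^{0}}\cdot\frac{1}{?} \]
and then simplify directly. The cleaner route is to insert the asymptotic
\[ J_{d/2}(z)^2=\frac{2}{\pi z}\cos^2\!\left(z-\tfrac{(d+1)\pi}{4}\right)+O(z^{-2}). \]
Then $\cos^2=\tfrac12+\tfrac12\cos(\cdot)$. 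The constant part $\tfrac{1}{\pi z}$ contributes
\[ 2\int_0^{1/2}\frac{r^d}{|k|^d}\cdot\frac{1}{2\pi^2 r|k|}\,\mathrm{d}r=\frac{1}{d2^d\pi^2|k|^{d+1}}. \]
The oscillating part $\tfrac{1}{\pi z}\cos(4\pi r|k|-\cdot)$ integrated against $r^{d-1}$ gives $O(|k|^{-d-2})$ after one integration by parts. The $O(z^{-2})$ error contributes $r^{d-2}|k|^{-d-2}$, which is integrable for $d\ge2$. The region $r|k|<1$, where the asymptotic fails, contributes $O(|k|^{-2d-1})$ from the bound $r^{2d}$. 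Handling this error bookkeeping uniformly is the step I expect to need the most care, though it is routine.

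For the total sums, I would apply Parseval to $\mathds{1}_{B(0,r)}$ itself:
\[ \sum_{k}|\widehat{\mathds{1}_{B}}(k)|^2=\|\mathds{1}_B\|_2^2=V_r:=\frac{\pi^{d/2}r^d}{\Gamma(d/2+1)}. \]
Subtracting the $k=0$ term $V_r^2$ gives $V_r(1-V_r)$. Integrating $2\int_0^{1/2}(V_r-V_r^2)\,\mathrm{d}r$ yields
\[ \frac{\pi^{d/2}}{(d+1)2^d\Gamma(d/2+1)}-\frac{\pi^d}{(2d+1)2^{2d}\Gamma(d/2+1)^2}, \]
as claimed. As a sanity check, applying the formula to $\mu=\delta_0$ yields the i.i.d. formula \eqref{iidD2rate}.
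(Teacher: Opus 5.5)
Your proposal is correct and follows essentially the same route as the paper. The common steps are: Parseval applied to $\mu(B(x,r))-\mathrm{Vol}(B(x,r))=((\mu-\mathrm{Vol})*\mathds{1}_{B(0,r)})(x)$ using the Bessel formula for the Fourier coefficients of the ball; the small- and large-argument Bessel bounds for $b_k(r)$; the large-argument asymptotic, where the constant part of $\cos^2$ gives the main term of $b_k$; and Parseval for $\mathds{1}_{B(0,r)}$ itself to get the total sums. Delete the abandoned substitution display with the placeholder ``?'' (done correctly, its prefactor is $2(2\pi)^{-(d+1)}|k|^{-(2d+1)}$), and note that your restriction to $d\ge 2$ costs nothing, since for $d=1$ the $O(z^{-2})$ error term vanishes identically because $J_{1/2}(z)^2=\frac{2}{\pi z}\sin^2 z$.
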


\begin{proof} Let $V(r)=\mathrm{Vol}(B(0,r))=(\pi^{d/2}/\Gamma (d/2+1))r^d$. Observe that for all $y \in \mathbb{R}^d / \mathbb{Z}^d$, $k \in \mathbb{Z}^d$ and $0<r \le 1/2$, we have
\begin{equation}\label{ballfouriertransform}
\begin{split} \int_{\mathbb{R}^d / \mathbb{Z}^d} &\left( \mathds{1}_{B(x,r)}(y) - V(r) \right) e^{- 2 \pi i \langle k, x \rangle} \, \mathrm{d}\mathrm{Vol}(x) \\ &= \int_{\mathbb{R}^d / \mathbb{Z}^d} \left( \mathds{1}_{B(0,r)}(x-y) - V(r) \right) e^{- 2 \pi i \langle k, x \rangle} \, \mathrm{d}\mathrm{Vol}(x) \\ &= \left\{ \begin{array}{ll} 0 & \textrm{if } k=0, \\ e^{- 2 \pi i \langle k, y \rangle} r^{d/2} |k|^{-d/2} J_{d/2}(2 \pi r |k|) & \textrm{if } k \neq 0. \end{array} \right. \end{split}
\end{equation}
Therefore the Fourier coefficients of the function $\mu (B(x,r)) - \mathrm{Vol}(B(x,r))$ are
\[ \begin{split} \int_{\mathbb{R}^d / \mathbb{Z}^d} &\left( \mu (B(x,r)) - \mathrm{Vol}(B(x,r)) \right) e^{-2 \pi i \langle k, x \rangle} \, \mathrm{d}\mathrm{Vol}(x) \\ &= \int_{\mathbb{R}^d / \mathbb{Z}^d} \int_{\mathbb{R}^d / \mathbb{Z}^d} \left( \mathds{1}_{B(x,r)}(y) - V(r) \right) e^{- 2 \pi i \langle k, x \rangle} \, \mathrm{d}\mathrm{Vol}(x) \, \mathrm{d}\mu (y) \\ &= \left\{ \begin{array}{ll} 0 & \textrm{if } k=0, \\ \widehat{\mu}(k) r^{d/2} |k|^{-d/2} J_{d/2}(2 \pi r |k|) & \textrm{if } k \neq 0. \end{array} \right. \end{split} \]
An application of the Parseval formula thus yields
\[ D_2(\mu, \mathrm{Vol},r)^2 = \int_{\mathbb{R}^d / \mathbb{Z}^d} \left( \mu (B(x,r)) - \mathrm{Vol}(B(x,r)) \right)^2 \, \mathrm{d}\mathrm{Vol}(x) = \sum_{\substack{k \in \mathbb{Z}^d \\ k \neq 0}} b_k(r) |\widehat{\mu}(k)|^2 \]
with coefficients $b_k(r) = r^d |k|^{-d} J_{d/2}(2 \pi r |k|)^2$, as claimed.

The asymptotic bounds $J_{d/2}(t) \ll t^{d/2}$ as $t \to 0^+$ and $J_{d/2}(t) \ll t^{-1/2}$ as $t \to \infty$ \cite[p.\ 209 and 218]{AAR} imply $J_{d/2}(2 \pi r |k|)^2 \ll \min \{ r^d |k|^d , r^{-1} |k|^{-1} \}$ and the desired upper estimate for $b_k(r)$ follows. Setting $y=0$ in \eqref{ballfouriertransform} and applying the Parseval formula leads to the total sum of the coefficients
\[ \sum_{\substack{k \in \mathbb{Z}^d \\ k \neq 0}} b_k(r) = \sum_{\substack{k \in \mathbb{Z}^d \\ k \neq 0}} \frac{r^{d}}{|k|^d} J_{d/2}(2 \pi r |k|)^2 = \int_{\mathbb{R}^d / \mathbb{Z}^d} \left( \mathds{1}_{B(0,r)}(x) - V(r) \right)^2 \, \mathrm{d}\mathrm{Vol}(x) = V(r)(1-V(r)). \]

A further averaging over $0<r \le 1/2$ yields $D_2(\mu, \mathrm{Vol})=(\sum_{k \neq 0} b_k |\widehat{\mu}(k)|^2)^{1/2}$ with coefficients
\[ b_k = 2 \int_0^{1/2} b_k(r) \, \mathrm{d}r = 2 \int_0^{1/2} \frac{r^d}{|k|^d} J_{d/2}(2 \pi r |k|)^2 \, \mathrm{d}r , \]
as claimed. In particular, the total sum of the coefficients $b_k$ is
\[ \begin{split} \sum_{\substack{k \in \mathbb{Z}^d \\ k \neq 0}} b_k &= 2 \int_0^{1/2} \frac{\pi^{d/2}r^d}{\Gamma \left( \frac{d}{2}+1 \right)} \left( 1- \frac{\pi^{d/2}r^d}{\Gamma \left( \frac{d}{2}+1 \right)} \right) \, \mathrm{d}r \\ &=\frac{\pi^{d/2}}{(d+1)2^d \Gamma \left( \frac{d}{2}+1 \right)} - \frac{\pi^d}{(2d+1)2^{2d} \Gamma \left( \frac{d}{2} +1 \right)^2}. \end{split} \]
Finally, the asymptotics of the Bessel function \cite[p.\ 209]{AAR}
\[ J_{d/2}(t)= \sqrt{\frac{2}{\pi t}} \left( \cos \left( t-\frac{(d+1)\pi}{4} \right) + O \left( \frac{1}{t} \right) \right) \quad \textrm{as } t \to \infty \]
shows that
\[ \begin{split} b_k &= 2 \int_0^{1/2} \frac{r^d}{|k|^d} \cdot \frac{1}{\pi^2 r |k|} \cos^2 \left( 2 \pi r |k| - \frac{(d+1)\pi}{4} \right) \, \mathrm{d}r + O \left( \int_0^{1/2} \frac{r^d}{|k|^d} \cdot \frac{1}{r^2 |k|^2} \, \mathrm{d} r \right) \\ &= \frac{1}{d 2^d \pi^2 |k|^{d+1}} + O \left( \frac{1}{|k|^{d+2}} \right) . \end{split} \]
\end{proof}

The empirical measure $\mu_N$ of a sample of $N$ i.i.d.\ uniformly distributed random points $a_1, a_2, \ldots, a_N \in \mathbb{R}^d / \mathbb{Z}^d$ satisfies
\[ \mathbb{E} (|\widehat{\mu_N}|^2) = \frac{1}{N^2} \mathbb{E} \left( \left| \sum_{n=1}^N e^{- 2 \pi i \langle k, a_n \rangle} \right|^2 \right) =\frac{1}{N} + \frac{1}{N^2} \sum_{\substack{n,m=1 \\ n \neq m}}^N \mathbb{E} \left( e^{2 \pi i \langle k, a_n \rangle} e^{-2 \pi i \langle k, a_m \rangle} \right) = \frac{1}{N} . \]
Substituting the previous formula in Lemma \ref{D2torus_lemma} leads to
\[ \begin{split} \mathbb{E} \left( D_2 (\mu_N, \mathrm{Vol},r)^2 \right) &= \frac{\pi^{d/2}r^d}{\Gamma \left( \frac{d}{2}+1 \right)} \left( 1- \frac{\pi^{d/2}r^d}{\Gamma \left( \frac{d}{2}+1 \right)} \right) \frac{1}{N} , \\ \mathbb{E} \left( D_2 (\mu_N, \mathrm{Vol})^2 \right) &= \left( \frac{\pi^{d/2}}{(d+1)2^d \Gamma \left( \frac{d}{2}+1 \right)} - \frac{\pi^d}{(2d+1)2^{2d} \Gamma \left( \frac{d}{2} +1 \right)^2} \right) \frac{1}{N} . \end{split} \]
This proves relation \eqref{iidD2rate}.

\section{Wasserstein metric and $L^2$ discrepancy}\label{wassersteinL2discrepancy_section}

\begin{proof}[Proof of Theorem \ref{W2D2_theorem}] Let us first prove the theorem for $M=\mathbb{R}^d / \mathbb{Z}^d$. Lemma \ref{D2torus_lemma} expresses $D_2(\mu, \mathrm{Vol})$ in terms of the Fourier transform as
\[ D_2 (\mu, \mathrm{Vol}) \asymp \bigg( \sum_{\substack{k \in \mathbb{Z}^d \\ k \neq 0}} \frac{|\widehat{\mu}(k)|^2}{|k|^{d+1}} \bigg)^{1/2} . \]
An application of Theorem \ref{torussmoothing_theorem1} with $p=2$ shows that for any integer $H>2$,
\[ \begin{split} W_2 (\mu, \mathrm{Vol}) &\ll \frac{1}{H} + \bigg( \sum_{\substack{k \in [-H,H]^d \\ k \neq 0}} \frac{|\widehat{\mu}(k)|^2}{|k|^2} \bigg)^{1/2} \ll \frac{1}{H} + \bigg( \sum_{\substack{k \in [-H,H]^d \\ k \neq 0}} \frac{|\widehat{\mu}(k)|^2}{|k|^{d+1}} H^{d-1} \bigg)^{1/2} \\ &\ll \frac{1}{H} + D_2 (\mu, \mathrm{Vol}) H^{(d-1)/2} . \end{split} \]
The optimal choice $H \asymp D_2(\mu, \mathrm{Vol})^{-2/(d+1)}$ leads to $W_2(\mu, \mathrm{Vol}) \ll D_2(\mu, \mathrm{Vol})^{2/(d+1)}$, as claimed.

We now prove the theorem for $M=\mathbb{S}^d$. Since $V(r)=\mathrm{Vol}(B(x,r))$ does not depend on $x$, we have
\[ \begin{split} D_2(\mu, \mathrm{Vol})^2 &= \frac{1}{2} \int_0^{\pi} \int_{\mathbb{S}^d} |\mu (B(x,r)) - V(r)|^2 \sin r \, \mathrm{d} \mathrm{Vol}(x) \, \mathrm{d} r \\ &= \frac{1}{2} \int_0^{\pi} \int_{\mathbb{S}^d} \left| \int_{\mathbb{S}^d} \left( \mathds{1}_{B(x,r)}(y) - V(r) \right) \mathrm{d}\mathrm{\mu}(y) \right|^2 \sin r \, \mathrm{d}\mathrm{Vol}(x) \, \mathrm{d}r \\ &= \frac{1}{2} \int_{\mathbb{S}^d} \int_{\mathbb{S}^d} \int_0^{\pi} \int_{\mathbb{S}^d} \left( \mathds{1}_{B(y_1, r)}(x) - V(r) \right) \left( \mathds{1}_{B(y_2, r)}(x) -V(r) \right)  \sin r \, \mathrm{d} \mathrm{Vol}(x) \, \mathrm{d} r \, \mathrm{d}\mu(y_1) \mathrm{d}\mu (y_2) \\ &= \frac{1}{2} \int_{\mathbb{S}^d} \int_{\mathbb{S}^d} \int_0^{\pi} \left( \mathrm{Vol}(B(y_1, r) \cap B(y_2, r)) - V(r)^2 \right) \sin r \, \mathrm{d} r \, \mathrm{d}\mu(y_1) \, \mathrm{d}\mu(y_2) . \end{split} \]
Let $P_{\ell}^{(\alpha, \beta)}(t)$ denote the Jacobi polynomials. Skriganov \cite[Theorem 4.1 (ii)]{SK} showed the expansion
\[ \int_0^{\pi} \left( \mathrm{Vol}(B(y_1, r) \cap B(y_2, r)) - V(r)^2 \right) \sin r \, \mathrm{d}r = \sum_{\ell=1}^{\infty} b_{\ell} P_{\ell}^{(d/2-1, d/2-1)}(\langle y_1, y_2 \rangle) \]
with positive real coefficients
\[ b_{\ell} = \frac{\Gamma (d) (2\ell+d-1) \Gamma (\ell)^2 \Gamma (\ell+d-1)}{2^{2d} \Gamma \left( \frac{d}{2} \right) \Gamma \left( \ell + \frac{d}{2} \right)^3} \int_0^{\pi} (\sin r)^{2d+1} P_{\ell-1}^{(d/2, d/2)} (\cos r)^2 \, \mathrm{d}r \asymp \frac{1}{\ell^{d/2+1}} . \]
The order of magnitude of $b_{\ell}$ follows from the asymptotic formula of Darboux for Jacobi polynomials, see Szeg\H{o} \cite[p.\ 173 and p.\ 196]{SZ}. In particular,
\begin{equation}\label{D2expansion}
D_2 (\mu, \mathrm{Vol})^2 \asymp \sum_{\ell=1}^{\infty} \frac{1}{\ell^{d/2+1}} \int_{\mathbb{S}^d} \int_{\mathbb{S}^d} P_{\ell}^{(d/2-1, d/2-1)} (\langle y_1, y_2 \rangle) \, \mathrm{d}\mu(y_1) \, \mathrm{d}\mu (y_2) .
\end{equation}

The addition formula for spherical harmonics
\[ \sum_{m=1}^{d_{\ell}} Y_{\ell}^m (y_1) \overline{Y_{\ell}^m(y_2)} = \frac{(2\ell+d-1)\Gamma \left( \frac{d}{2} \right) \Gamma (\ell+d-1)}{\Gamma(d) \Gamma \left( \ell + \frac{d}{2} \right)} P_{\ell}^{(d/2-1, d/2-1)} (\langle y_1, y_2 \rangle), \qquad y_1, y_2 \in \mathbb{S}^d, \]
shows that
\[ \begin{split} \sum_{m=1}^{d_{\ell}} |\widehat{\mu}(\ell, m)|^2 &= \int_{\mathbb{S}^d} \int_{\mathbb{S}^d} \sum_{m=1}^{d_{\ell}} Y_{\ell}^m (y_1) \overline{Y_{\ell}^m (y_2)} \, \mathrm{d}\mu(y_1) \, \mathrm{d} \mu(y_2) \\ &= \frac{(2\ell+d-1)\Gamma \left( \frac{d}{2} \right) \Gamma (\ell+d-1)}{\Gamma(d) \Gamma \left( \ell + \frac{d}{2} \right)} \int_{\mathbb{S}^d} \int_{\mathbb{S}^d} P_{\ell}^{(d/2-1, d/2-1)}(\langle y_1, y_2 \rangle) \, \mathrm{d}\mu(y_1) \, \mathrm{d}\mu(y_2) \\ &\asymp \ell^{d/2} \int_{\mathbb{S}^d} \int_{\mathbb{S}^d} P_{\ell}^{(d/2-1, d/2-1)}(\langle y_1, y_2 \rangle) \, \mathrm{d}\mu(y_1) \, \mathrm{d}\mu(y_2) . \end{split} \]
An application of Theorem \ref{spheresmoothing_theorem} with $p=2$, \eqref{D2expansion} and the previous formula thus show that for any positive integer $L$,
\[ \begin{split} W_2 (\mu, \mathrm{Vol}) &\ll \frac{1}{L} + \bigg( \sum_{\ell=1}^L \frac{1}{\ell^2} \sum_{m=1}^{d_{\ell}} |\widehat{\mu}(\ell,m)|^2 \bigg)^{1/2} \\ &\ll \frac{1}{L} + \left( \sum_{\ell=1}^L \frac{L^{d-1}}{\ell^{d/2+1}} \int_{\mathbb{S}^d} \int_{\mathbb{S}^d} P_{\ell}^{(d/2-1, d/2-1)}(\langle y_1, y_2 \rangle) \, \mathrm{d}\mu(y_1) \, \mathrm{d}\mu(y_2) \right)^{1/2} \\ &\ll \frac{1}{L} + D_2(\mu, \mathrm{Vol}) L^{(d-1)/2} . \end{split} \]
The optimal choice $L \asymp D_2(\mu, \mathrm{Vol})^{-2/(d+1)}$ leads to $W_2 (\mu, \mathrm{Vol}) \ll D_2 (\mu, \mathrm{Vol})^{2/(d+1)}$, as claimed.
\end{proof}

\section{Equidistribution in the torus}\label{torus_section}

\subsection{Kronecker sequences and higher rank lattices}

We will need a quantitative form of Kronecker's theorem due to Cassels. The following lemma in the special case $N_1=N_2=\cdots=N_m$ is stated in \cite[p.\ 99]{CA}, but the proof actually works for general $N_1, N_2, \ldots, N_m \ge 2$. For the sake of completeness, we discuss the necessary modification of the proof and deduce the general case of the lemma.
\begin{lem}\label{cassels_lemma} Let $\alpha_1, \alpha_2, \ldots, \alpha_m \in \mathbb{R}^d$, let $N_1, N_2, \ldots, N_m \ge 2$ be integers and let $N=N_1 N_2 \cdots N_m$. Under the assumption \eqref{diophantinelattice}, every axis-parallel cube in $\mathbb{R}^d / \mathbb{Z}^d$ of sidelength
\[ \frac{((d+m)!)^2}{2^{d+m-1} \psi^{-1} \left( 2^{(d+m-2)m} ((d+m)!)^{-2m} I N \right)} \]
contains at least one point from the set
\begin{equation}\label{latticeset}
\left\{ n_1 \alpha_1 + n_2 \alpha_2 + \cdots + n_m \alpha_m + \mathbb{Z}^d : n_1 \in [1, N_1], \,\, n_2 \in [1, N_2], \,\, \ldots, \,\, n_m \in [1,N_m] \right\} .
\end{equation}
\end{lem}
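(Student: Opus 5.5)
The proof will follow Cassels' transference argument from \cite[Chapter V]{CA}, which rests on Mahler's theory of successive minima of a convex body and its polar. Fix a target point $\beta \in \mathbb{R}^d$ and consider the lattice $\Lambda \subset \mathbb{R}^{d+m}$ generated by the vectors $(\alpha_j, e_j)$, $1 \le j \le m$, together with $(e_i, 0)$, $1 \le i \le d$. Call its points $(n_1\alpha_1+\cdots+n_m\alpha_m - k, n_1, \ldots, n_m)$. Finding a point of the set \eqref{latticeset} in the cube centred at $\beta$ amounts to solving the inhomogeneous system
\[ \| n_1\alpha_1+\cdots+n_m\alpha_m - \beta\|_\infty \le \tfrac{1}{2}\delta, \qquad |n_j - \tfrac{N_j+1}{2}| \le \tfrac{N_j-1}{2}, \]
where $\delta$ is the sidelength in the statement. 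After shifting $n_j$ by $(N_j+1)/2$, this is the inhomogeneous problem for the box
\[ K = \{ (x,y) \in \mathbb{R}^d\times\mathbb{R}^m : \|x\|_\infty \le \delta/2,\ |y_j| \le N_j/2 \}. \]
The relevant tool is the inhomogeneous transference theorem \cite[Chapter V, Theorem VI]{CA}. It states that if the polar (dual) body $K^*$, relative to the dual lattice $\Lambda^*$, contains no nonzero dual lattice point after shrinking by a suitable factor, then every translate of $K$ (scaled by a constant depending only on $d+m$) contains a point of $\Lambda$. Concretely, the statement is that $\lambda_{d+m}(K,\Lambda) \le c\,/\lambda_1(K^*,\Lambda^*)$ with $c$ of the shape $((d+m)!)^2/2^{d+m-1}$, together with the fact that $\lambda_{d+m}\le 1$ covers all translates. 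This is exactly where the constants $((d+m)!)^2$ and $2^{d+m-1}$ in the lemma come from.

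The key step is to identify the dual lattice and bound its first minimum using \eqref{diophantinelattice}. The dual lattice $\Lambda^*$ consists of the vectors $(k, -\langle k,\alpha_1\rangle + l_1, \ldots, -\langle k,\alpha_m\rangle+l_m)$ with $k\in\mathbb{Z}^d$ and $l\in\mathbb{Z}^m$. The polar box is
\[ K^* \asymp \{ \|u\|_1 \le 2/\delta,\ \sum_j |v_j| N_j \le 2\}. \]
Suppose $t K^*$ contains a nonzero point of $\Lambda^*$. If $k=0$, the point is a nonzero integer vector $l$, which would force $t \gg \min_j N_j$ and is harmless. Otherwise $\|k\|_\infty \le 2t/\delta$ and $\|\langle k,\alpha_j\rangle\| \le 2t/N_j$ for every $j$. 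Taking the product over $j$ gives $\prod_j \|\langle k,\alpha_j\rangle\| \le (2t)^m/N$. Combining this with $\psi(\|k\|_\infty)\prod_j\|\langle k,\alpha_j\rangle\|\ge I$ and the monotonicity of $\psi$ yields
\[ I \le \psi(2t/\delta)\,(2t)^m/N. \]
Setting $t$ equal to the reciprocal of the transference constant, we choose $\delta$ so that this inequality fails, namely $2t/\delta = \psi^{-1}(\text{const}\cdot I N)$. This reproduces the stated sidelength, with the constant inside $\psi^{-1}$ equal to $2^{(d+m-2)m}((d+m)!)^{-2m}$ after the powers of $t$ are tracked. The generalized inverse $\psi^{-1}$ is handled via $\psi(x)>y \Leftarrow x>\psi^{-1}(y)$, up to the boundary case, which can be absorbed by a limiting argument. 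Then $\lambda_1(K^*,\Lambda^*) > t$, so transference gives $\lambda_{d+m}(K,\Lambda)\le 1$, and every translate of $K$ contains a lattice point. That is the claim.

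The only real modification from Cassels' equal-$N_j$ case is that the box has unequal side lengths $N_j$. This is harmless, because the transference theorem applies to arbitrary symmetric convex bodies and the Diophantine hypothesis is multiplicative, so only $\prod N_j = N$ enters. I expect the main obstacle to be bookkeeping. The first part is getting the exact constants to match the statement, including the use of $\ell^1$ versus $\ell^\infty$ polar norms. The second is ruling out the degenerate $k=0$ dual vectors. For those, $\|l\|$ large forces $t\gg N_j\ge 2$, and I would need to check that this holds for the chosen $t$; otherwise one notes that the claim is trivial when the sidelength is at least $1$.
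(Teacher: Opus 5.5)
There is a genuine gap, and it sits exactly where you flagged it: the dual vectors with $k=0$. The vector $(0,e_j)\in\Lambda^*$ has gauge exactly $N_j/2$ in your polar body, whatever $\delta$ is. More generally, any box whose $n_j$-side has length at most $N_j$ gives $(0,e_j)$ gauge at most $N_j/2$, and you need such a box to confine $n_j$ to $N_j$ consecutive values. For your inequality $\lambda_{d+m}(K,\Lambda)\le c/\lambda_1(K^*,\Lambda^*)$ to yield $\lambda_{d+m}\le 1$, you need $\lambda_1(K^*,\Lambda^*)\ge c$, where $c=((d+m)!)^2/2^{d+m-1}\ge 2$ depends only on $d+m$. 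This fails outright whenever some $N_j$ is small, e.g.\ $N_1=2$, and no choice of $\delta$ repairs it.

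Your fallback does not help either. The sidelength is governed by $N=\prod_j N_j$, which becomes arbitrarily large with $N_1=2$ fixed and $N_2\to\infty$, so the cube is small and the claim is far from trivial. The lemma is needed for all $N_j\ge 2$ in Theorem \ref{lattice_theorem}. A target-independent hypothesis on $\lambda_1(K^*,\Lambda^*)$ therefore cannot prove the lemma as stated.

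The missing idea, which is what the paper does, has two parts. First, centre the $n_j$-ranges at $0$. Second, use Cassels' inhomogeneous transference theorem in its target-dependent form \cite[Theorem XVI]{CA}. With $C$ half the sidelength, one seeks integers with $|\sum_j n_j\alpha_{ji}+y_i-\beta_i|\le C$ and $|n_j|\le (N_j-1)/2$, and translates into $[1,N_j]$ at the end. Theorem XVI reduces this to checking
\[ \Big\| \sum_{i=1}^d k_i\beta_i \Big\|_{\mathbb{R}/\mathbb{Z}} \le \frac{2^{d+m-1}}{((d+m)!)^2} \max\Big\{ C\|k\|_{\infty}, \max_{1\le j\le m} \frac{N_j-1}{2}\,|v_j-\langle k,\alpha_j\rangle| \Big\} \]
for all integers $v_1,\ldots,v_m,k_1,\ldots,k_d$.

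Because the target has zero $n$-component, the left-hand side vanishes when $k=0$, so those dual vectors impose no condition at all. With your off-centre target $(N_j+1)/2$ they would pair to $1/2$ for even $N_j$, and the problem would return.

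For $k\neq 0$ your computation is essentially the paper's. The left-hand side is at most $1/2$. Combining $\max_j\ge$ geometric mean, $N_j-1\ge N_j/2$ and the Diophantine hypothesis gives a right-hand side at least $1/2$, via the dichotomy $\|k\|_\infty\ge\psi^{-1}(Y)$ or $\psi(\|k\|_\infty)\le Y$ with $Y=2^{(d+m-2)m}((d+m)!)^{-2m}IN$. This dichotomy also removes the need for your limiting argument.

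This verification is also where the explicit constants come from. Your sketch does not actually produce them. Its step ``$\lambda_{d+m}\le 1$ covers all translates'' is false in general: one only gets that every translate of $\frac{d+m}{2}K$ meets $\Lambda$.
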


\begin{proof} Let $\beta \in \mathbb{R}^d$ be arbitrary and let
\[ C = \frac{((d+m)!)^2}{2^{d+m} \psi^{-1} \left( 2^{(d+m-2)m} ((d+m)!)^{-2m} I N \right)} \]
be half the desired sidelength. Let $\alpha_j=(\alpha_{j1}, \alpha_{j2}, \ldots, \alpha_{jd})$, $1 \le j \le m$. Consider the linear forms
\[ \begin{split} f_i (n_1, \ldots, n_m, y_1, \ldots, y_d) &= \left\{ \begin{array}{ll} C^{-1} \left( \sum_{j=1}^m n_j \alpha_{ji} + y_i \right) & \textrm{if } 1 \le i \le d, \\ \left( \frac{N_{i-d}-1}{2} \right)^{-1} n_{i-d} & \textrm{if } d< i \le d+m, \end{array} \right. \\ g_i (v_1, \ldots, v_m, k_1, \ldots, k_d) &= \left\{ \begin{array}{ll} C k_i & \textrm{if } 1 \le i \le d, \\ \left( \frac{N_{i-d}-1}{2} \right) \left( v_{i-d} - \langle k, \alpha_{i-d} \rangle \right) & \textrm{if } d< i \le d+m, \end{array} \right. \end{split} \]
and note that $\sum_{i=1}^{d+m} f_i g_i = \sum_{i=1}^m n_i v_i + \sum_{i=1}^d y_i k_i$. We claim that for all integers $v_1, \ldots, v_m$, $k_1, \ldots, k_d$, we have
\begin{equation}\label{cassels}
\left\| \sum_{i=1}^d k_i \beta_i \right\|_{\mathbb{R}/\mathbb{Z}} \le \frac{2^{d+m-1}}{((d+m)!)^2} \max \left\{ C \| k \|_{\infty}, \max_{1 \le j \le m} \frac{N_j-1}{2} \left| v_j - \langle k, \alpha_j \rangle \right| \right\} .
\end{equation}
Indeed, if $k=0$, then the left-hand side vanishes, consequently \eqref{cassels} holds. Otherwise the left-hand side is at most $1/2$, and by assumption \eqref{diophantinelattice},
\[ \max_{1 \le j \le m} \frac{N_j-1}{2} \left| v_j - \langle k, \alpha_j \rangle \right| \ge \left( \prod_{1 \le j \le m} \frac{N_j}{4} \| \langle k, \alpha_j \rangle \|_{\mathbb{R}/\mathbb{Z}} \right)^{1/m} \ge \frac{I^{1/m} N^{1/m}}{4 \psi (\| k \|_{\infty})^{1/m}} . \]
The choice of $C$ ensures that for any $k \in \mathbb{Z}^d$, $k \neq 0$, either
\[ \frac{1}{2} \le \frac{2^{d+m-1}}{((d+m)!)^2} C \| k \|_{\infty}  \]
or
\[ \frac{1}{2} \le \frac{2^{d+m-1}}{((d+m)!)^2} \cdot \frac{I^{1/m} N^{1/m}}{4 \psi (\| k \|_{\infty})^{1/m}} \]
holds, and \eqref{cassels} follows. An application of \cite[Theorem XVI, p.\ 97]{CA} then shows that there exist integers $n_1, \ldots, n_m, y_1, \ldots, y_d$ such that
\[ \begin{aligned} \bigg| \sum_{j=1}^m n_j \alpha_{ji} + y_i - \beta_i \bigg| &\le C, & &1 \le i \le d, \\ |n_j| &\le \frac{N_j-1}{2}, & &1 \le j \le m . \end{aligned} \]
In particular, the point $\sum_{j=1}^m n_j \alpha_j + \mathbb{Z}^d$ lies in the axis-parallel cube centered at $\beta+\mathbb{Z}^d$ with sidelength $2C$. Since a translated copy of the set
\[ \left\{ n_1 \alpha_1 + n_2 \alpha_2 + \cdots + n_m \alpha_m + \mathbb{Z}^d : |n_1| \le \frac{N_1-1}{2}, \,\, |n_2| \le \frac{N_2-1}{2}, \,\, \ldots, |n_m| \le \frac{N_m-1}{2} \right\} \]
is contained in the set \eqref{latticeset}, the latter also intersects all axis-parallel cubes of sidelength $2C$, as claimed.
\end{proof}

\begin{proof}[Proof of Theorem \ref{lattice_theorem}] Throughout the proof, implied constants depend only on $I$, $c$, $d$ and $m$. The claim for $W_{\infty}$ follows from substituting $p=\log N$ in the claim for $W_p$ and using Lemma \ref{WqWplemma}.

Let $2 \le p < \infty$ and $1 < q \le 2$ be such that $1/p+1/q=1$, and let $b \ge 0$, $r>0$ and $t>0$ to be chosen. The Fourier transform of
\[ \mu_N = \frac{1}{N} \sum_{n_1=1}^{N_1} \sum_{n_2=1}^{N_2} \cdots \sum_{n_m=1}^{N_m} \delta_{n_1 \alpha_1 + n_2 \alpha_2 + \cdots + n_m \alpha_m + \mathbb{Z}^d} \]
can be estimated as
\[ |\widehat{\mu_N}(k)| = \left| \frac{1}{N} \prod_{j=1}^m \sum_{n_j=1}^{N_j} e^{- 2 \pi i n_j \langle k, \alpha_j \rangle}  \right| \le \frac{1}{N} \prod_{j=1}^m \frac{2}{|1-e^{-2 \pi i \langle k, \alpha_j \rangle}|} \ll \frac{1}{N \prod_{j=1}^m \| \langle k, \alpha_j \rangle \|_{\mathbb{R}/\mathbb{Z}}}. \]
An application of Theorem \ref{torussmoothing_theorem2} thus yields
\begin{equation}\label{Wpkroneckerestimate}
W_p (\mu_N, \mathrm{Vol}) \ll p^{1/2} t^{1/2} + \frac{p(1-\delta^{1/p})}{(1-\delta)N} \bigg( \sum_{\substack{k \in \mathbb{Z}^d \\ k \neq 0}} \frac{e^{-4 \pi^2 |k|^2 q t}}{|k|^q \prod_{j=1}^m \| \langle k, \alpha_j \rangle \|^q_{\mathbb{R}/\mathbb{Z}}} \bigg)^{1/q} ,
\end{equation}
where
\[ \delta = \min \left\{ b \frac{e^{-r^2/(4t)}}{(4 \pi t)^{d/2}}, 1 \right\} . \]

Lemma \ref{cassels_lemma} states that any axis-parallel cube in $\mathbb{R}^d / \mathbb{Z}^d$ of sidelength $\asymp 1/\psi^{-1}(N)$ contains at least one point $n_1 \alpha_1+n_2 \alpha_2 + \cdots + n_m \alpha_m+\mathbb{Z}^d$ with $1 \le n_j \le N_j$, $1 \le j \le m$. Any ball $B(x,r) \subseteq \mathbb{R}^d / \mathbb{Z}^d$ contains $\gg (r \psi^{-1}(N))^d$ disjoint axis-parallel cubes of sidelength $\asymp 1/\psi^{-1}(N)$, therefore $\mu_N(B(x,r)) \gg N^{-1} (r \psi^{-1}(N))^d$. We can thus choose $b \asymp N^{-1} (r \psi^{-1}(N))^d$, hence
\[ \delta \gg \frac{1}{N} \left( r \psi^{-1}(N) \right)^d \frac{e^{-r^2/(4t)}}{(4 \pi t)^{d/2}} .  \]
The optimal choice is $r=t^{1/2}$, in which case $\delta \gg (\psi^{-1}(N))^d/N$. Observe
\[ \frac{p(1-\delta^{1/p})}{1-\delta} \le p, \]
and using $e^x \ge 1+x$, $x \in \mathbb{R}$, also
\[ \frac{p(1-\delta^{1/p})}{1-\delta} = \frac{p (1-e^{-(\log (1/\delta))/p})}{1-\delta} \le \frac{\log (1/\delta)}{1-\delta} . \]
Therefore
\begin{equation}\label{deltaestimate}
\frac{p (1-\delta^{1/p})}{1-\delta} \le \min \left\{ p, \frac{\log (1/\delta)}{1-\delta} \right\} \ll \min \left\{ p, \log \left(\frac{N}{(\psi^{-1}(N))^d} +2 \right) \right\} .
\end{equation}

In order to estimate the sum over $k$ in \eqref{Wpkroneckerestimate}, we use a lattice point counting estimate of Fregoli \cite[Proposition 1.2]{FR}, which states that under assumption \eqref{diophantinelattice}, for any $\varepsilon>0$ and $Q \ge 1$, we have
\[ \bigg| \bigg\{ k \in \mathbb{Z}^d : \prod_{j=1}^m \left\| \langle k, \alpha_j \rangle \right\|_{\mathbb{R} / \mathbb{Z}} < \varepsilon, \,\, 0<\| k \|_{\infty} \le Q \bigg\} \bigg| \ll \left( (\varepsilon \psi(Q))^{\frac{d+m-1}{d+m}} + \varepsilon Q^d \right) \left( \log \left( \frac{1}{\varepsilon} +2 \right) \right)^{m-1} . \]
In particular,
\[ \begin{split} \sum_{\substack{k \in \mathbb{Z}^d \\ 0<\| k \|_{\infty} \le Q}} &\frac{1}{\prod_{j=1}^m \| \langle k, \alpha_j \rangle \|_{\mathbb{R}/\mathbb{Z}}^q} \\ &\ll \sum_{1 \le n \le \frac{\log (\psi(Q)/I)}{\log 2}} 2^{qn} \bigg| \bigg\{ k \in \mathbb{Z}^d : \prod_{j=1}^m \left\| \langle k, \alpha_j \rangle \right\|_{\mathbb{R} / \mathbb{Z}} \in (2^{-n-1}, 2^{-n}], \,\, 0< \| k \|_{\infty} \le Q \bigg\} \bigg| \\ &\ll \sum_{1 \le n \le \frac{\log (\psi(Q)/I)}{\log 2}} \left( 2^{\left( q-\frac{d+m-1}{d+m} \right) n} \psi(Q)^{\frac{d+m-1}{d+m}} + 2^{(q-1)n} Q^d \right) n^{m-1} \\ &\ll \left( \psi(Q)^q + \frac{Q^d \psi(Q)^{q-1}}{2^{q-1}-1} \right) \left( \log (\psi(Q)+2) \right)^{m-1} . \end{split} \]
Here $1/(2^{q-1}-1) \asymp 1/(q-1) \asymp p$. For any integer $\ell \ge 0$, we thus have
\[ \begin{split} \sum_{\substack{k \in \mathbb{Z}^d \\ 2^{\ell} \le |k| < 2^{\ell+1}}} &\frac{e^{-4 \pi^2 |k|^2 q t}}{|k|^q \prod_{j=1}^m \| \langle k, \alpha_j \rangle \|^q_{\mathbb{R}/\mathbb{Z}}} \\ &\le \frac{e^{-4 \pi^2 2^{2\ell}qt}}{2^{q\ell}} \sum_{\substack{k \in \mathbb{Z}^d \\ 0< \|k \|_{\infty} < 2^{\ell+1}}} \frac{1}{\prod_{j=1}^m \| \langle k, \alpha \rangle \|^q_{\mathbb{R}/\mathbb{Z}}} \\ &\ll e^{-4 \pi^2 2^{2\ell}qt} \left( \frac{\psi (2^{\ell+1})^q}{2^{q \ell}}+p 2^{(d-q) \ell} \psi (2^{\ell+1})^{q-1} \right) (\log (\psi (2^{\ell+1})+2))^{m-1} . \end{split} \]
Summing the previous estimate over all $\ell \ge 0$ leads to
\begin{equation}\label{sumoverell1}
\begin{split} \sum_{\substack{k \in \mathbb{Z}^d \\ k \neq 0}} &\frac{e^{-4 \pi^2 |k|^2 q t}}{|k|^q \prod_{j=1}^m \| \langle k, \alpha_j \rangle \|^q_{\mathbb{R}/\mathbb{Z}}} \\ &\ll \sum_{\ell=0}^{\infty} e^{-4 \pi^2 2^{2\ell}qt} \left( \frac{\psi (2^{\ell+1})^q}{2^{q \ell}}+p 2^{(d-q) \ell} \psi (2^{\ell+1})^{q-1} \right) (\log (\psi (2^{\ell+1})+2))^{m-1}. \end{split}
\end{equation}
The assumption that $x^{-d}\psi(x)$ is nondecreasing shows that the terms with $2^{\ell+1} \le t^{-1/2}$ satisfy
\[ \psi(2^{\ell+1}) \le 2^{d(\ell+1)} t^{d/2} \psi (t^{-1/2}), \]
hence
\begin{equation}\label{sumoverell2}
\begin{split} \sum_{2^{\ell+1} \le t^{-1/2}} &e^{-4 \pi^2 2^{2\ell}qt} \left( \frac{\psi (2^{\ell+1})^q}{2^{q \ell}}+p 2^{(d-q) \ell} \psi (2^{\ell+1})^{q-1} \right) (\log (\psi (2^{\ell+1})+2))^{m-1} \\ &\ll \left( t^{dq/2} \psi (t^{-1/2})^q + p t^{d(q-1)/2} \psi(t^{-1/2})^{q-1} \right) (\log (\psi (t^{-1/2})+2))^{m-1} \sum_{2^{\ell+1} \le t^{-1/2}} 2^{(d-1)q \ell} \\ &\ll \left( t^{q/2} \psi (t^{-1/2})^q + p t^{(q-d)/2} \psi (t^{-1/2})^{q-1} \right) (\log (\psi (t^{-1/2})+2))^{m-1} . \end{split}
\end{equation}
The assumption $\psi(xy) \le c e^{x^2} \psi (y)$ shows that the terms with $2^{\ell+1} > t^{-1/2}$ satisfy
\[ \psi (2^{\ell+1}) \le c e^{2^{2\ell+2}t} \psi (t^{-1/2}), \]
hence
\begin{equation}\label{sumoverell3}
\begin{split} \sum_{2^{\ell+1}>t^{-1/2}} & e^{-4 \pi^2 2^{2\ell}qt} \left( \frac{\psi (2^{\ell+1})^q}{2^{q \ell}}+p 2^{(d-q) \ell} \psi (2^{\ell+1})^{q-1} \right) (\log (\psi (2^{\ell+1})+2))^{m-1} \\ &\ll  \sum_{2^{\ell+1}>t^{-1/2}} e^{-4 (\pi^2-1) 2^{2\ell}qt} \left( \frac{\psi(t^{-1/2})^q}{2^{q \ell}} + p 2^{(d-q)\ell} \psi(t^{-1/2})^{q-1} \right) \\ &\hspace{20mm} \times \left( \log (\psi (t^{-1/2})+2) + 2^{2\ell}t \right)^{m-1} \\ &\ll \left( t^{q/2} \psi (t^{-1/2})^q + p t^{(q-d)/2} \psi (t^{-1/2})^{q-1} \right) (\log (\psi (t^{-1/2})+2))^{m-1} . \end{split}
\end{equation}
The estimates \eqref{sumoverell1}--\eqref{sumoverell3} yield
\[ \sum_{\substack{k \in \mathbb{Z}^d \\ k \neq 0}} \frac{e^{-4 \pi^2 |k|^2 q t}}{|k|^q \prod_{j=1}^m \| \langle k, \alpha_j \rangle \|^q_{\mathbb{R}/\mathbb{Z}}} \ll t^{q/2} \psi (t^{-1/2})^q \left( 1+p \frac{t^{-d/2}}{\psi (t^{-1/2})} \right) (\log (\psi (t^{-1/2})+2))^{m-1} . \]

Substituting the previous formula and \eqref{deltaestimate} in \eqref{Wpkroneckerestimate} leads to
\[ \begin{split} W_p (\mu_N, \mathrm{Vol}) &\ll p^{1/2} t^{1/2} + \min \left\{ p, \log \left( \frac{N}{(\psi^{-1}(N))^d}+2 \right) \right\} \\ &\hspace{24mm} \times \frac{t^{1/2} \psi (t^{-1/2})}{N} \left( 1+p \frac{t^{-d/2}}{\psi(t^{-1/2})} \right) (\log (\psi (t^{-1/2})+2))^{(m-1)/q} \\ &= p^{1/2} t^{1/2} + \min \left\{ p, \log \left( \frac{N}{(\psi^{-1}(N))^d}+2 \right) \right\} \frac{t^{1/2}}{N} F_p (t^{-1/2}) \end{split} \]
in terms of the function
\[ F_p(x) = \left( \psi(x)+px^d \right) \left( \log (\psi(x)+2) \right)^{(m-1)(p-1)/p} . \]
The optimal choice for $t$ is
\[ t \asymp \left( F_p^{-1} \left( \frac{p^{1/2} N}{\min \left\{ p, \log \left( \frac{N}{(\psi^{-1}(N))^d}+2 \right) \right\}} \right) \right)^{-2}, \]
which proves the desired estimate for $W_p (\mu_N, \mathrm{Vol})$.
\end{proof}

For the sake of completeness, we prove that a.e.\ system of vectors $\alpha_1, \alpha_2, \ldots, \alpha_m \in \mathbb{R}^d$ satisfies the Diophantine assumption \eqref{diophantinelattice} with $\psi (x)=x^d (\log x)^m (\log \log x)^{1+\varepsilon}$,  $\varepsilon>0$.
\begin{lem}\label{metriclemma} Let $\psi: [1, \infty) \to [1,\infty)$ and assume
\[ \sum_{n=1}^{\infty} \frac{(\log (\psi (n)+2))^{m-1}}{\psi(n)} n^{d-1}  < \infty . \]
Then a.e.\ system of vectors $\alpha_1, \alpha_2, \ldots, \alpha_m \in \mathbb{R}^d$ satisfies \eqref{diophantinelattice}.
\end{lem}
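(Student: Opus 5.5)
The plan is a Borel--Cantelli argument on the torus $(\mathbb{R}^d/\mathbb{Z}^d)^m$. Identify the system $(\alpha_1,\ldots,\alpha_m)$ with a point of $[0,1)^{dm}$; this is harmless because \eqref{diophantinelattice} only depends on each $\alpha_j$ modulo $\mathbb{Z}^d$. For each nonzero $k \in \mathbb{Z}^d$ and a parameter $\eta>0$, define the bad set
\[ E_k(\eta) = \bigg\{ (\alpha_1,\ldots,\alpha_m) : \prod_{j=1}^m \| \langle k, \alpha_j \rangle \|_{\mathbb{R}/\mathbb{Z}} < \frac{\eta}{\psi(\|k\|_\infty)} \bigg\} . \]
If a system lies in only finitely many $E_k(\eta)$, then \eqref{diophantinelattice} holds with some $I>0$. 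Indeed, for the finitely many exceptional $k$ the product is still positive as soon as $\langle k,\alpha_j\rangle \notin \mathbb{Z}$ for all $j$, and this fails only on a null set, namely a countable union of hyperplanes. So it suffices to show $\sum_{k \neq 0} \mathrm{Vol}(E_k(\eta)) < \infty$ for a fixed $\eta$, say $\eta=1$.

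The measure of $E_k$ is computed as follows. For fixed $k \neq 0$, the map $\alpha_j \mapsto \langle k,\alpha_j\rangle \bmod 1$ pushes Lebesgue measure on $\mathbb{R}^d/\mathbb{Z}^d$ forward to Lebesgue measure on $\mathbb{R}/\mathbb{Z}$, because it is a surjective continuous homomorphism of compact groups. The $m$ coordinates are independent, so $\mathrm{Vol}(E_k(\eta))$ equals the Lebesgue measure of
\[ \Big\{ (x_1,\ldots,x_m) \in (\mathbb{R}/\mathbb{Z})^m : \prod_j \|x_j\| < \varepsilon \Big\}, \qquad \varepsilon = \eta/\psi(\|k\|_\infty) . \]
The standard estimate for this set is $\ll \varepsilon (\log(1/\varepsilon + 2))^{m-1}$. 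It follows by induction on $m$: each $\|x_j\|$ is uniform on $[0,1/2]$, and integrating
\[ \mathrm{P}\big( \|x_m\| < \varepsilon / (\|x_1\|\cdots\|x_{m-1}\|) \big) \]
gives one extra logarithm per variable. Since $\psi \ge 1$ and $\eta=1$, we have $\log(\psi/\eta + 2) \asymp \log(\psi+2)$.

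Summing over $k$, group the $k$ by $n = \|k\|_\infty$; there are $\ll n^{d-1}$ such $k$. This gives
\[ \sum_{k \neq 0} \mathrm{Vol}(E_k) \ll \sum_{n=1}^\infty n^{d-1} \frac{(\log(\psi(n)+2))^{m-1}}{\psi(n)} < \infty \]
by hypothesis. Borel--Cantelli then shows that a.e. system lies in only finitely many $E_k(1)$. Removing the null set where some $\langle k,\alpha_j\rangle \in \mathbb{Z}$ finishes the proof: the infimum in \eqref{diophantinelattice} is a minimum of finitely many positive numbers together with a lower bound $1$ for all other $k$, hence positive.

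The only step needing care is the measure estimate for the product set, especially the induction constant and the form of the logarithm for small $\varepsilon$. Everything else is routine.
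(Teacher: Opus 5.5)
Your proposal is correct and follows the paper's proof essentially step by step. The shared steps are: reduction to the torus $\mathbb{R}^{dm}/\mathbb{Z}^{dm}$, measure preservation under the surjective homomorphism $(\alpha_1,\ldots,\alpha_m)\mapsto(\langle k,\alpha_1\rangle,\ldots,\langle k,\alpha_m\rangle)$, the bound $\ll t(\log(1/t+2))^{m-1}$ for the product set with $t=1/\psi(\|k\|_\infty)$, the count $\ll n^{d-1}$ of $k$ with $\|k\|_\infty=n$, and Borel--Cantelli. Your explicit removal of the null set where some $\langle k,\alpha_j\rangle\in\mathbb{Z}$, which guarantees positivity for the finitely many exceptional $k$, supplies a detail the paper leaves implicit in its final ``in particular''.
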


\begin{proof} The Diophantine condition \eqref{diophantinelattice} depends only on the fractional parts of the coordinates of the vectors $\alpha_1, \alpha_2, \ldots, \alpha_m$. In particular, it will be enough to prove that $\mathrm{Vol}$-a.e.\ $(\alpha_1, \alpha_2, \ldots, \alpha_m) \in \mathbb{R}^{dm}/\mathbb{Z}^{dm}$ satisfies \eqref{diophantinelattice}.

Let $k \in \mathbb{Z}^d$, $k \neq 0$. Observe that the map $\mathbb{R}^{dm} / \mathbb{Z}^{dm} \to \mathbb{R}^m / \mathbb{Z}^m$, $(\alpha_1, \alpha_2, \ldots, \alpha_m) \mapsto (\langle k, \alpha_1 \rangle, \langle k, \alpha_2 \rangle, \ldots, \langle k, \alpha_m \rangle)$ is a continuous surjective homomorphism. In particular, the map preserves the normalized Haar measure $\mathrm{Vol}$, hence for any $t>0$,
\[ \begin{split} \mathrm{Vol} \bigg( \bigg\{ (\alpha_1, \alpha_2, \ldots, \alpha_m) &\in \mathbb{R}^{dm} / \mathbb{Z}^{dm} : \prod_{j=1}^m \left\| \langle k, \alpha_j \rangle \right\|_{\mathbb{R}/\mathbb{Z}} \le t \bigg\} \bigg) \\ &= \mathrm{Vol} \bigg( \bigg\{ (x_1, x_2, \ldots, x_m) \in \mathbb{R}^m / \mathbb{Z}^m : \prod_{j=1}^m \| x_j \|_{\mathbb{R}/\mathbb{Z}} \le t \bigg\} \bigg) . \end{split} \]
Identifying $\mathbb{R}^m/\mathbb{Z}^m$ by the unit cube $(-1/2,1/2]^m$, one readily checks by induction on $m$ that
\[ \mathrm{Vol} \bigg( \bigg\{ (x_1, x_2, \ldots, x_m) \in \left( -\frac{1}{2}, \frac{1}{2} \right]^m : \prod_{j=1}^m |x_j| \le t \bigg\} \bigg) \ll t \left( \log \left( \frac{1}{t} +2 \right) \right)^{m-1} \]
with an implied constant that depends only on $m$. An application of the previous two formulas with $t=1/\psi (\| k \|_{\infty})$ yields
\[ \mathrm{Vol} \bigg( \bigg\{ (\alpha_1, \alpha_2, \ldots, \alpha_m) \in \mathbb{R}^{dm} / \mathbb{Z}^{dm} : \psi (\| k \|_{\infty}) \prod_{j=1}^m \left\| \langle k, \alpha_j \rangle \right\|_{\mathbb{R}/\mathbb{Z}} \le 1 \bigg\} \bigg) \ll \frac{(\log (\psi (\| k \|_{\infty})+2))^{m-1}}{\psi(\| k \|_{\infty})} . \]
The number of lattice points $k \in \mathbb{Z}^d$ such that $\| k \|_{\infty}=n$ is $\ll n^{d-1}$, therefore the sum of volumes converges by assumption:
\[ \sum_{\substack{k \in \mathbb{Z}^d \\ k \neq 0}} \frac{( \log (\psi(\| k \|_{\infty})+2))^{m-1}}{\psi(\| k \|_{\infty})} \ll \sum_{n=1}^{\infty} \frac{(\log (\psi(n)+2))^{m-1}}{\psi(n)} n^{d-1} < \infty . \]
The Borel--Cantelli lemma thus shows that for $\mathrm{Vol}$-a.e.\ $(\alpha_1, \alpha_2, \ldots, \alpha_m) \in \mathbb{R}^{dm} / \mathbb{Z}^{dm}$, the inequality
\[ \psi (\| k \|_{\infty}) \prod_{j=1}^m \| \langle k, \alpha_j \rangle \|_{\mathbb{R}/\mathbb{Z}} > 1 \]
holds for all but finitely many $k \in \mathbb{Z}^d$. In particular, \eqref{diophantinelattice} is satisfied.
\end{proof}

\subsection{Modular hyperbolas}

\begin{proof}[Proof of Theorem \ref{modularhyperbola_theorem}] Let $q$ be a prime and let $a \in (\mathbb{Z}/q\mathbb{Z})^{\times}$. The Fourier transform of the empirical measure $\mu_{a,q,d}$ is
\[ \widehat{\mu_{a,q,d}}(k) = \frac{1}{(q-1)^{d-1}} \sum_{\substack{x_1, x_2, \ldots, x_d \in (\mathbb{Z}/q\mathbb{Z})^{\times} \\ x_1 x_2 \cdots x_d \equiv a \pmod{q}}} e^{-2 \pi i (k_1 x_1 + k_2 x_2 + \cdots + k_d x_d)/q} , \]
which is clearly invariant under permutations of the coordinates of $k=(k_1, k_2, \ldots, k_d) \in \mathbb{Z}^d$.

If none of the integers $k_1, k_2, \ldots, k_d$ are divisible by $q$, then a reindexing of the sum leads to $\widehat{\mu_{a,q,d}}(k) = (q-1)^{-(d-1)} K_d((-1)^d a k_1 k_2 \cdots k_d;q)$, where
\[ K_d(n;q) = \sum_{\substack{x_1, x_2, \ldots, x_d \in (\mathbb{Z}/q\mathbb{Z})^{\times} \\ x_1 x_2 \cdots x_d \equiv n \pmod{q}}} e^{2 \pi i (x_1+x_2+\cdots+x_d)/q} \]
is the generalized Kloosterman sum. The Weil bound due to Deligne \cite[Section 6.7]{DE} states that $|K_d(n;q)| \le d q^{(d-1)/2}$ for all $n \in (\mathbb{Z}/q\mathbb{Z})^{\times}$, hence $|\widehat{\mu_{a,q,d}}(k)| \ll q^{-(d-1)/2}$.

If $j \ge 1$ many of the integers $k_1, k_2, \ldots, k_d$ are divisible by $q$, then
\[ \begin{split} \widehat{\mu_{a,q,d}}(k) &= \frac{1}{(q-1)^{d-1}} \sum_{x_{j+1}, x_{j+2}, \ldots, x_d \in (\mathbb{Z}/q\mathbb{Z})^{\times}} e^{-2 \pi i (k_{j+1} x_{j+1} + k_{j+2} x_{j+2} + \cdots + k_d x_d)/q} \sum_{\substack{x_1, x_2, \ldots, x_j \in (\mathbb{Z}/q \mathbb{Z})^{\times} \\ x_1 x_2 \cdots x_d \equiv a \pmod{q}}} 1 \\ &= \frac{1}{(q-1)^{d-j}} \left( \sum_{x_{j+1} \in (\mathbb{Z}/q\mathbb{Z})^{\times}} e^{-2 \pi i k_{j+1} x_{j+1}/q} \right) \cdots \left( \sum_{x_d \in (\mathbb{Z} /q \mathbb{Z})^{\times}} e^{-2 \pi i k_d x_d/q} \right) \\ &= \frac{(-1)^{d-j}}{(q-1)^{d-j}} . \end{split} \]

Let $1 \le H<q$ be an integer to be chosen. An application of Theorem \ref{torussmoothing_theorem1} with $p=2$ yields
\[ W_2 (\mu_{a,q,d}, \mathrm{Vol}) \ll \frac{1}{H} + \bigg( \sum_{\substack{k \in [-H,H]^d \\ k \neq 0}} \frac{|\widehat{\mu_{a,q,d}}(k)|^2}{|k|^2} \bigg)^{1/2} . \]
The contribution of all $k \in [-H,H]^d$ none of whose coordinates are zero is
\[ \sum_{k \in ([-H,H]\backslash \{ 0 \})^d} \frac{|\widehat{\mu_{a,q,d}}(k)|^2}{|k|^2} \ll \frac{1}{q^{d-1}} \sum_{\substack{k \in [-H,H]^d \\ k \neq 0}} \frac{1}{|k|^2} \ll \left\{ \begin{array}{ll} q^{-1} \log H & \textrm{if } d=2, \\ q^{-(d-1)} H^{d-2} & \textrm{if } d \ge 3. \end{array} \right. \]
The contribution of all $k \in [-H,H]^d$ with $1 \le j \le d-1$ many zero coordinates is
\[ \sum_{\substack{k \in [-H,H]^d \\ |\{ 1 \le i \le d \, : \, k_i=0 \}|=j}} \frac{|\widehat{\mu_{a,q,d}}(k)|^2}{|k|^2} \ll \frac{1}{q^{2(d-j)}} \sum_{\substack{\ell \in [-H,H]^{d-j} \\ \ell \neq 0}} \frac{1}{|\ell|^2} \ll \left\{ \begin{array}{ll} q^{-2(d-j)} H^{d-j-2} & \textrm{if } 1 \le j \le d-3, \\ q^{-2(d-j)} \log H & \textrm{if } j=d-2, \\ q^{-2(d-j)} & \textrm{if } j=d-1. \end{array} \right. \]
The assumption $H<q$ implies that the case $j=d-1$ yields the largest error term. Adding the previous two estimates leads to
\[ \sum_{\substack{k \in [-H,H]^d \\ k \neq 0}} \frac{|\widehat{\mu_{a,q,d}}(k)|^2}{|k|^2} \ll \left\{ \begin{array}{ll} q^{-1} \log H & \textrm{if } d=2, \\ q^{-(d-1)} H^{d-2} + q^{-2} & \textrm{if } d \ge 3, \end{array} \right. \]
consequently
\[ W_2 (\mu_{a,q,d}, \mathrm{Vol}) \ll \frac{1}{H} + \left\{ \begin{array}{ll} q^{-1/2} (\log H)^{1/2} & \textrm{if } d=2, \\ q^{-(d-1)/2} H^{(d-2)/2} + q^{-1} & \textrm{if } d \ge 3.  \end{array} \right. \]
The optimal choice $H \asymp q^{(d-1)/d}$ proves the desired estimate for the quadratic Wasserstein metric
\[ W_2 (\mu_{a,q,d}, \mathrm{Vol}) \ll \left\{ \begin{array}{ll} (\log q)^{1/2} q^{-1/2} & \textrm{if } d=2, \\ q^{-(d-1)/d} & \textrm{if } d \ge 3. \end{array} \right. \]
Note this matches the best possible rate \eqref{Wpbestrate} in dimension $d \ge 3$.

We use Lemma \ref{D2torus_lemma} in order to estimate $D_2(\mu_{a,q,d}, \mathrm{Vol})$. If all coordinates of $k \in \mathbb{Z}^d$ are divisible by $q$, then $|\widehat{\mu_{a,q,d}}(k)|=1$. The contribution of all such terms is
\[ \sum_{\substack{k \in \mathbb{Z}^d \\ k \neq 0 \\ q \mid k_1, k_2, \ldots, k_d}} \frac{|\widehat{\mu_{a,q,d}}(k)|^2}{|k|^{d+1}} = \sum_{\substack{\ell \in \mathbb{Z}^d \\ \ell \neq 0}} \frac{1}{q^{d+1} |\ell|^{d+1}} \ll \frac{1}{q^{d+1}} . \]
If at least one coordinate of $k$ is not divisible by $q$, then $|\widehat{\mu_{a,q,d}}(k)| \ll \max \{ q^{-1}, q^{-(d-1)/2} \}$. The contribution of all such terms is
\[ \sum_{\substack{k \in \mathbb{Z}^d \\ q \nmid k_i \textrm{ for some } i}} \frac{|\widehat{\mu_{a,q,d}}(k)|^2}{|k|^{d+1}} \ll \max \{ q^{-2}, q^{-(d-1)} \} \sum_{\substack{k \in \mathbb{Z}^d \\ k \neq 0}} \frac{1}{|k|^{d+1}} \ll \left\{ \begin{array}{ll} q^{-1} & \textrm{if } d=2, \\ q^{-2} & \textrm{if } d \ge 3. \end{array} \right. \]
This proves the upper bound
\[ D_2 (\mu_{a,q,d}, \mathrm{Vol}) \asymp \bigg( \sum_{\substack{k \in \mathbb{Z}^d \\ k \neq 0}} \frac{|\widehat{\mu_{a,q,d}}|^2}{|k|^{d+1}} \bigg)^{1/2} \ll \left\{ \begin{array}{ll} q^{-1/2} & \textrm{if } d=2, \\ q^{-1} & \textrm{if } d \ge 3. \end{array} \right. \]
Since $|\widehat{\mu_{a,q,d}}(k)| = 1/(q-1)$ whenever $k$ has exactly one coordinate that is not divisible by $q$, we have
\[ \sum_{\substack{k \in \mathbb{Z}^d \\ k \neq 0}} \frac{|\widehat{\mu_{a,q,d}}(k)|^2}{|k|^{d+1}} \ge \frac{1}{(q-1)^2}\sum_{\substack{\ell \in \mathbb{Z} \\ q \nmid \ell}} \frac{1}{|\ell|^{d+1}} \gg \frac{1}{q^2} . \]
This proves the matching lower bound $D_2 (\mu_{a,q,d}, \mathrm{Vol}) \gg q^{-1}$ in dimension $d \ge 3$.
\end{proof}

\begin{proof}[Proof of Theorem \ref{modularhyperbolaW2_theorem}] Let $q$ be a prime and let $a \in (\mathbb{Z}/q\mathbb{Z})^{\times}$. We first apply a smoothing procedure to $\mu_{a,q,2}$ using the heat kernel
\[ P_t (x) = \sum_{k \in \mathbb{Z}^2} e^{-4 \pi^2 |k|^2 t} e^{2 \pi i \langle k, x \rangle}, \qquad t>0, \,\, x \in \mathbb{R}^2 / \mathbb{Z}^2 . \]
The convolution $P_t * \mu_{a,q,2}$ is an absolutely continuous probability measure on $\mathbb{R}^2 / \mathbb{Z}^2$ and has density
\[ \frac{\mathrm{d}(P_t * \mu_{a,q,d})}{\mathrm{d}\mathrm{Vol}}(x) = \int_{\mathbb{R}^2 / \mathbb{Z}^2} P_t (x-y) \, \mathrm{d} \mu_{a,q,2}(y) = \sum_{k \in \mathbb{Z}^2} e^{-4 \pi^2 |k|^2 t} \widehat{\mu_{a,q,2}}(k) e^{2 \pi i \langle k, x \rangle} = 1+f(x) \]
with the smooth function
\[ f(x) = \sum_{\substack{k \in \mathbb{Z}^2 \\ k \neq 0}} e^{-4 \pi^2 |k|^2 t} \widehat{\mu_{a,q,2}}(k) e^{2 \pi i \langle k, x \rangle} . \]
The heat kernel is contractive with respect to the metric $W_2$, that is, $W_2 (P_t * \mu, P_t * \nu) \le W_2 (\mu, \nu)$ for any $\mu, \nu \in \mathcal{P}(\mathbb{R}^2 / \mathbb{Z}^2)$, see \cite{AST}. Since $P_t*\mathrm{Vol}=\mathrm{Vol}$, we have
\begin{equation}\label{W2heatkernellowerbound1}
W_2 (P_t * \mu_{a,q,2}, \mathrm{Vol}) \le W_2 (\mu_{a,q,2}, \mathrm{Vol}).
\end{equation}

As observed in the proof of Theorem \ref{modularhyperbola_theorem}, we have
\begin{equation}\label{muaq2fouriercoefficients}
\widehat{\mu_{a,q,2}}(k) = \left\{ \begin{array}{ll} 1 & \textrm{if } q \mid k_1, k_2, \\ -1/(q-1) & \textrm{if } q \textrm{ divides exactly one of } k_1, k_2, \\ K_2 (ak_1 k_2;q)/(q-1) & \textrm{if } q \nmid k_1, k_2,  \end{array} \right.
\end{equation}
where
\[ K_2 (n;q) = \sum_{x \in (\mathbb{Z}/q\mathbb{Z})^{\times}} e^{2 \pi i (nx+\overline{x})/q} \]
is the Kloosterman sum. The contribution of the terms $q \mid k_1, k_2$ in $f(x)$ is
\[ \Bigg| \sum_{\substack{k \in \mathbb{Z}^2 \\ k \neq 0 \\ q \mid k_1, k_2}} e^{-4 \pi^2 |k|^2 t} \widehat{\mu_{a,q,2}}(k) e^{2 \pi i \langle k, x \rangle} \Bigg| \le \sum_{\substack{\ell \in \mathbb{Z}^2 \\ \ell \neq 0}} e^{-4 \pi^2 q^2 |\ell|^2 t} \ll \frac{1}{q^2 t} . \]
Using the Weil bound $|K_2 (n;q)| \le 2 q^{1/2}$, $n \in (\mathbb{Z}/q\mathbb{Z})^{\times}$, the contribution of all other terms is
\[ \Bigg |\sum_{\substack{k \in \mathbb{Z}^2 \\ k \neq 0 \\ q \nmid k_1 \textrm{ or } q \nmid k_2}} e^{-4 \pi^2 |k|^2 t} \widehat{\mu_{a,q,2}}(k) e^{2 \pi i \langle k, x \rangle} \Bigg| \ll \frac{1}{q^{1/2}} \sum_{\substack{k \in \mathbb{Z}^2 \\ k \neq 0}} e^{-4 \pi^2 |k|^2 t} \ll \frac{1}{q^{1/2} t} . \]
The previous two estimates show that $|f(x)| \ll q^{-1/2} t^{-1}$. Choosing, say, $t=q^{-1/4}$, we thus have $|f(x)| \ll q^{-1/4}$, and an application of Lemma \ref{ledoux_lemma} leads to
\begin{equation}\label{W2heatkernellowerbound2}
\left( \int_{\mathbb{R}^2 / \mathbb{Z}^2} f (\Delta^{-1} f) \, \mathrm{d} \mathrm{Vol} \right)^{1/2} \ll W_2 (P_t * \mu_{a,q,2}, \mathrm{Vol}) .
\end{equation}

Applying the inverse of the Laplace--Beltrami operator to $f(x)$ gives the absolutely convergent Fourier series
\[ (\Delta^{-1} f)(x) = \sum_{\substack{k \in \mathbb{Z}^2 \\ k \neq 0}} e^{-4 \pi^2 |k|^2 t} \frac{\widehat{\mu_{a,q,2}}(k)}{4 \pi^2 |k|^2} e^{2 \pi i \langle k, x \rangle}, \]
therefore the Parseval formula leads to
\[ \int_{\mathbb{R}^2 / \mathbb{Z}^2} f (\Delta^{-1} f) \, \mathrm{d} \mathrm{Vol} = \sum_{\substack{k \in \mathbb{Z}^2 \\ k \neq 0}} e^{-8 \pi^2 |k|^2 t} \frac{|\widehat{\mu_{a,q,2}}(k)|^2}{4 \pi^2 |k|^2} . \]
Estimates \eqref{W2heatkernellowerbound1}, \eqref{W2heatkernellowerbound2} and the previous formula yield
\[ W_2 (\mu_{a,q,2}, \mathrm{Vol})^2 \gg \sum_{\substack{k \in \mathbb{Z}^2 \\ k \neq 0}} e^{-8 \pi^2 |k|^2 t} \frac{|\widehat{\mu_{a,q,2}}(k)|^2}{|k|^2} \gg \frac{1}{q^2} \sum_{\substack{k \in \mathbb{Z}^2 \\ q \nmid k_1, k_2}} e^{-8 \pi^2 |k|^2 t} \frac{K_2(ak_1 k_2; q)^2}{|k|^2} . \]

Let us now take the average over $a \in (\mathbb{Z}/q\mathbb{Z})^{\times}$. Observe that for any given $k_1, k_2$ not divisible by $q$, the values $ak_1k_2$ attain each residue class in $(\mathbb{Z}/q\mathbb{Z})^{\times}$ exactly once. Therefore
\[ \frac{1}{q-1} \sum_{a \in (\mathbb{Z}/q\mathbb{Z})^{\times}} W_2 (\mu_{a,q,2}, \mathrm{Vol})^2 \gg \frac{1}{q^3} \sum_{a \in (\mathbb{Z}/q\mathbb{Z})^{\times}} K_2 (a;q)^2 \sum_{\substack{k \in \mathbb{Z}^2 \\ q \nmid k_1, k_2}} \frac{e^{-8 \pi^2 |k|^2 t}}{|k|^2} . \]
The sum of squares of Kloosterman sums is evaluated as
\begin{equation}\label{kloostermansecondmoment}
\begin{split} \sum_{a \in (\mathbb{Z}/q\mathbb{Z})^{\times}} K_2(a;q)^2 &= \sum_{x \in (\mathbb{Z}/q\mathbb{Z})^{\times}} \sum_{y \in (\mathbb{Z}/q\mathbb{Z})^{\times}} \sum_{a \in (\mathbb{Z}/q\mathbb{Z})^{\times}} e^{2 \pi i (a(x-y) +\overline{x}-\overline{y})/q} \\ &= \sum_{x \in (\mathbb{Z}/q\mathbb{Z})^{\times}} \sum_{y \in (\mathbb{Z}/q\mathbb{Z})^{\times}} \left( (q-1) \mathds{1}_{\{ x=y \}} - e^{2 \pi i (\overline{x}-\overline{y})/q} \mathds{1}_{\{ x \neq y \}} \right) \\ &= \sum_{x \in (\mathbb{Z}/q\mathbb{Z})^{\times}} \left( q+e^{2 \pi i \overline{x}/q} \right) = q^2-q-1. \end{split}
\end{equation}
Finally, by the choice $t=q^{-1/4}$,
\[ \sum_{\substack{k \in \mathbb{Z}^2 \\ q \nmid k_1, k_2}} \frac{e^{-8 \pi^2 |k|^2 t}}{4 \pi^2 |k|^2} \gg \sum_{\substack{k \in \mathbb{Z}^2 \\ |k| \le q^{1/8} \\ q \nmid k_1, k_2}} \frac{1}{|k|^2} \gg \log q . \]
The previous three estimates prove the desired relation
\[ \frac{1}{q-1} \sum_{a \in (\mathbb{Z}/q\mathbb{Z})^{\times}} W_2 (\mu_{a,q,2}, \mathrm{Vol})^2 \gg \frac{\log q}{q}. \]
\end{proof}

\begin{proof}[Proof of Theorem \ref{modularhyperbolaD2_theorem}] Let $q$ be a prime, let $a \in (\mathbb{Z}/q\mathbb{Z})^{\times}$ and let $0<r \le 1/2$. Lemma \ref{D2torus_lemma} states that
\[ D_2(\mu_{a,q,2}, \mathrm{Vol},r)^2 = \sum_{\substack{k \in \mathbb{Z}^2 \\ k \neq 0}} b_k(r) |\widehat{\mu_{a,q,2}}(k)|^2 \]
with coefficients $b_k(r)=r^2 |k|^{-2} J_1 (2 \pi r |k|)^2 \ll \min \{ r^4, r |k|^{-3} \}$. The total sum of all coefficients is $\sum_{k \neq 0} b_k(r) = \pi r^2 (1-\pi r^2)$.

Recall \eqref{muaq2fouriercoefficients} and note that both $b_k(r)$ and $\widehat{\mu_{a,q,2}}(k)$ are invariant under switching the coordinates of $k$. Hence
\begin{equation}\label{D2muaq2explicit}
D_2 (\mu_{a,q,2}, \mathrm{Vol}, r)^2 = \frac{1}{(q-1)^2} \sum_{\substack{k \in \mathbb{Z}^2 \\ q \nmid k_1, k_2}} b_k(r) K_2(ak_1k_2;q)^2 + \frac{2}{(q-1)^2} \sum_{\substack{k \in \mathbb{Z}^2 \\ q \mid k_1, \,\, q \nmid k_2}} b_k(r) + \sum_{\substack{k \in \mathbb{Z}^2 \\ k \neq 0 \\ q \mid k_1, k_2}} b_k(r) .
\end{equation}
Taking the average over $a \in (\mathbb{Z}/q\mathbb{Z})^{\times}$ and using formula \eqref{kloostermansecondmoment} for the sum of squares of Kloosterman sums leads to
\begin{equation}\label{D2muaq2}
\begin{split} \frac{1}{q-1} \sum_{a \in (\mathbb{Z}/q\mathbb{Z})^{\times}} &D_2 (\mu_{a,q,2}, \mathrm{Vol}, r)^2 \\ &= \frac{q^2-q-1}{(q-1)^3} \sum_{\substack{k \in \mathbb{Z}^2 \\ q \nmid k_1, k_2}} b_k(r) + \frac{2}{(q-1)^2} \sum_{\substack{k \in \mathbb{Z}^2 \\ q \mid k_1, \,\, q \nmid k_2}} b_k(r) + \sum_{\substack{k \in \mathbb{Z}^2 \\ k \neq 0 \\ q \mid k_1, k_2}} b_k(r) . \end{split}
\end{equation}

It remains to estimate the sums of coefficients $b_k(r)$ in \eqref{D2muaq2}. Consider the last sum in \eqref{D2muaq2}. If $r \le 1/q$, then
\[ \sum_{\substack{k \in \mathbb{Z}^2 \\ k \neq 0 \\ q \mid k_1, k_2}} b_k(r) \ll \sum_{\substack{\ell \in \mathbb{Z}^2 \\ \ell \neq 0}} \min \left\{ r^4, \frac{r}{q^3 |\ell|^3} \right\} \ll \sum_{\substack{\ell \in \mathbb{Z}^2 \\ 0<|\ell| \le 1/(rq)}} r^4 + \sum_{\substack{\ell \in \mathbb{Z}^2 \\ |\ell| > 1/(rq)}} \frac{r}{q^3 |\ell|^3} \ll \frac{r^2}{q^2}. \]
If $r>1/q$, then
\[ \sum_{\substack{k \in \mathbb{Z}^2 \\ k \neq 0 \\ q \mid k_1, k_2}} b_k(r) \ll \sum_{\substack{\ell \in \mathbb{Z}^2 \\ \ell \neq 0}} \frac{r}{q^3 |\ell|^3} \ll \frac{r}{q^3}. \]
Altogether,
\begin{equation}\label{qmidk1k2}
\sum_{\substack{k \in \mathbb{Z}^2 \\ k \neq 0 \\ q \mid k_1, k_2}} b_k(r) \ll \min \left\{ \frac{r^2}{q^2}, \frac{r}{q^3} \right\} .
\end{equation}

Consider now the second sum in \eqref{D2muaq2} with the extra condition $k_1 \neq 0$. If $r \le 1/q$, then
\[ \begin{split} \sum_{\substack{k \in \mathbb{Z}^2 \\ q \mid k_1, \,\, q \nmid k_2 \\ k_1 \neq 0}} b_k(r) &\ll \sum_{\substack{\ell \in \mathbb{Z}^2 \\ \ell \neq 0}} \min \left\{ r^4, \frac{r}{(q^2 \ell_1^2 + \ell_2^2)^{3/2}} \right\} \\ &\ll \sum_{|\ell_1| \le 1/(rq)} \sum_{|\ell_2| \le 1/r} r^4 + \sum_{|\ell_1| \le 1/(rq)} \sum_{|\ell_2| > 1/r} \frac{r}{|\ell_2|^3} \\ &{\phantom{\ll}} + \sum_{|\ell_1| > 1/(rq)} \sum_{|\ell_2| \le q|\ell_1|} \frac{r}{q^3 |\ell_1|^3} + \sum_{|\ell_1| > 1/(rq)} \sum_{|\ell_2|> q |\ell_1|} \frac{r}{|\ell_2|^3} \\ &\ll  \frac{r^2}{q} . \end{split} \]
If $r>1/q$, then
\[ \sum_{\substack{k \in \mathbb{Z}^2 \\ q \mid k_1, \,\, q \nmid k_2 \\ k_1 \neq 0}} b_k(r) \ll \sum_{\substack{\ell \in \mathbb{Z}^2 \\ \ell_1, \ell_2 \neq 0}} \frac{r}{(q^2 \ell_1^2 + \ell_2^2)^{3/2}} \ll \sum_{\ell_1 \neq 0} \sum_{|\ell_2| \le q |\ell_1|} \frac{r}{q^3 |\ell_1|^3} + \sum_{\ell_1 \neq 0} \sum_{|\ell_2 | > q|\ell_1|} \frac{r}{|\ell_2|^3} \ll \frac{r}{q^2} . \]
Altogether,
\begin{equation}\label{qmidk1nmidk2}
\sum_{\substack{k \in \mathbb{Z}^2 \\ q \mid k_1, \,\, q \nmid k_2 \\ k_1 \neq 0}} b_k(r) \ll \min \left\{ \frac{r^2}{q}, \frac{r}{q^2} \right\} .
\end{equation}
Combining \eqref{D2muaq2}--\eqref{qmidk1nmidk2} leads to
\begin{equation}\label{D2muaq2second}
\begin{split} \frac{1}{q-1} \sum_{a \in (\mathbb{Z}/q\mathbb{Z})^{\times}} &D_2 (\mu_{a,q,2}, \mathrm{Vol}, r)^2 \\ &= \frac{q^2-q-1}{(q-1)^3} \sum_{\substack{k \in \mathbb{Z}^2 \\ q \nmid k_1, k_2}} b_k(r) + \frac{2}{(q-1)^2} \sum_{\substack{k \in \mathbb{Z}^2 \\ k_1=0, \,\, q \nmid k_2}} b_k(r)+ O \left( \min \left\{ \frac{r^2}{q^2}, \frac{r}{q^3} \right\} \right) . \end{split}
\end{equation}

The reason for the deviation of the constants from the i.i.d.\ case is that the sum of the coefficients $b_k(r)$ over $k_1=0$, $k_2 \neq 0$ is not negligible. Indeed,
\[ \sum_{\substack{k \in \mathbb{Z}^2 \\ k_1=0, \,\, q \nmid k_2}} b_k(r) = \sum_{\substack{\ell \in \mathbb{Z} \\ q \nmid \ell}} \frac{r^2}{\ell^2} J_1 (2 \pi r |\ell|)^2 = \sum_{\substack{\ell \in \mathbb{Z} \\ \ell \neq 0}} \frac{r^2}{\ell^2} J_1 (2 \pi r |\ell|)^2 + O \left( \frac{r^2}{q^2} \right) . \]
To evaluate the infinite series in the previous formula, observe that the Fourier coefficients of the even function $f(x)=\mathds{1}_{\{ |x| \le r \}} \sqrt{1-x^2/r^2}$, $x \in [-1/2, 1/2]$, are
\[ \int_{-1/2}^{1/2} f(x) e^{-2 \pi i \ell x} \, \mathrm{d}x = r \int_{-1}^1 \sqrt{1-x^2} e^{2 \pi i r |\ell| x} \, \mathrm{d}x = \frac{J_1(2 \pi r |\ell|)}{2 |\ell|} , \quad \ell \in \mathbb{Z} \backslash \{ 0 \} . \]
An application of the Parseval formula thus yields
\[ \sum_{\substack{\ell \in \mathbb{Z} \\ \ell \neq 0}} \frac{J_1 (2 \pi r |\ell|)^2}{4 \ell^2} = \int_{-r}^r \left( 1-\frac{x^2}{r^2} \right) \, \mathrm{d}x - \left( \int_{-r}^r \sqrt{1-\frac{x^2}{r^2}} \, \mathrm{d}x \right)^2 = \frac{4}{3} r - \frac{\pi^2}{4} r^2, \]
hence
\[ \sum_{\substack{k \in \mathbb{Z}^2 \\ k_1=0, \,\, q \nmid k_2}} b_k(r) = \frac{16}{3} r^3 - \pi^2 r^4 + O \left( \frac{r^2}{q^2} \right) . \]
The fact that $\sum_{k \neq 0} b_k(r)=\pi r^2 (1-\pi r^2)$, \eqref{qmidk1k2}, \eqref{qmidk1nmidk2} and the previous formula lead to
\[ \sum_{\substack{k \in \mathbb{Z}^2 \\ q \nmid k_1, k_2}} b_k(r) = \pi r^2 (1-\pi r^2) - \frac{32}{3} r^3 + 2 \pi^2 r^4 +O \left( \min \left\{ \frac{r^2}{q}, \frac{r}{q^2} \right\} \right) . \]
The previous two formulas simplify \eqref{D2muaq2second} to
\[ \begin{split} \frac{1}{q-1} &\sum_{a \in (\mathbb{Z}/q\mathbb{Z})^{\times}} D_2 (\mu_{a,q,2}, \mathrm{Vol}, r)^2 \\ &= \frac{q^2-q-1}{(q-1)^3} \left( \pi r^2 (1-\pi r^2) - \frac{32}{3} r^3 + 2 \pi^2 r^4 \right) + \frac{(32/3)r^3 - 2 \pi^2 r^4}{(q-1)^2} + O \left( \min \left\{ \frac{r^2}{q^2}, \frac{r}{q^3} \right\} \right) . \end{split} \]
Here $(q^2-q-1)/(q-1)^3 = 1/(q-1) + 1/(q-1)^2+O(q^{-3})$, and the desired formula for the quadratic average of $D_2 (\mu_{a,q,2}, \mathrm{Vol},r)$ follows. Further averaging over $0<r \le 1/2$ leads to the claim about the quadratic average of $D_2(\mu_{a,q,2}, \mathrm{Vol})$.
\end{proof}

As observed in the previous proof, the $L^2$ discrepancy with respect to balls of the modular hyperbola is closely related to the distribution of Kloosterman sums. The Weil bound $|K_2(a;q)| \le 2 \sqrt{q}$ means that we can write $K_2(a;q)=2 \sqrt{q} \cos (\psi_q(a))$ with a unique angle $\psi_q (a) \in [0, \pi]$. For a fixed prime $q$, the angles $\psi_q(a)$, $a \in (\mathbb{Z}/q\mathbb{Z})^{\times}$, are known to equidistribute in $[0,\pi]$ with respect to the Sato--Tate measure $(2/\pi) \sin^2 x \, \mathrm{d}x$. This is known as the vertical Sato--Tate conjecture for Kloosterman sums, and was proved by Katz \cite[Chapter 13]{KA2}. We were able to sidestep this deep result by relying on the explicit formula \eqref{kloostermansecondmoment} instead.

The horizontal Sato--Tate conjecture for Kloosterman sums, formulated by Katz \cite[Conjecture 1.2.5]{KA1}, predicts that for an arbitrary integer $a \neq 0$, the angles $\psi_q(a)$ as $q$ runs over all primes that do not divide $a$ up to a threshold also equidistribute in $[0,\pi]$ with respect to the Sato--Tate measure. That is, for any interval $[\alpha, \beta] \subseteq [0, \pi]$,
\begin{equation}\label{satotate}
\lim_{Q \to \infty} \frac{1}{\pi(Q)} \left| \left\{ q \le Q : q \textrm{ prime}, \,\, q \nmid a, \,\, \psi_q (a) \in [\alpha, \beta] \right\} \right| = \int_{\alpha}^{\beta} \frac{2}{\pi} \sin^2 x \, \mathrm{d}x.
\end{equation}

We now prove that the horizontal Sato--Tate conjecture \eqref{satotate} implies relation \eqref{averageD2overprimes}. Fix a nonzero integer $a$ and a large constant $C>0$. The explicit formula \eqref{D2muaq2explicit}, estimates \eqref{qmidk1k2}, \eqref{qmidk1nmidk2} and the calculations in the proof of Theorem \ref{modularhyperbolaD2_theorem} show
\[ (q-1) D_2(\mu_{a,q,2}, \mathrm{Vol},r)^2 = \sum_{\substack{k \in \mathbb{Z}^2 \\ q \nmid k_1, k_2}} 4 b_k(r) \cos^2 (\psi_q(ak_1k_2)) + O \left( \frac{1}{q} \right) \]
with a universal implied constant. Let us now take the average over all primes $q \le Q$ such that $q \nmid a$. Observing that
\[ \sum_{\substack{k \in \mathbb{Z}^2 \\ q \nmid k_1, k_2 \\ |k| \ge C}} 4 b_k(r) \cos^2 (\psi_q(ak_1k_2)) \ll \sum_{\substack{k \in \mathbb{Z}^2 \\ |k| \ge C}} \frac{1}{|k|^3} \ll \frac{1}{C} \]
and
\[ \frac{1}{\pi(Q)} \sum_{\substack{q \le C \\ q \textrm{ prime}, \,\, q \nmid a}} (q-1) D_2(\mu_{a,q,2}, \mathrm{Vol},r)^2 \ll \frac{\pi (C)}{\pi (Q)}, \]
we deduce
\[ \begin{split} \frac{1}{\pi(Q)} \sum_{\substack{q \le Q \\ q \textrm{ prime}, \,\, q \nmid a}} &(q-1) D_2(\mu_{a,q,2}, \mathrm{Vol},r)^2 \\ &= \frac{1}{\pi(Q)} \sum_{\substack{C< q \le Q \\ q \textrm{ prime}, \,\, q \nmid a}} \sum_{\substack{k \in \mathbb{Z}^2 \\ q \nmid k_1, k_2 \\ |k| < C}} 4 b_k(r) \cos^2 (\psi_q(ak_1 k_2)) + O \left( \frac{\log \log Q}{\pi(Q)}+ \frac{1}{C}+ \frac{\pi(C)}{\pi(Q)} \right) \\ &= \sum_{\substack{k \in \mathbb{Z}^2 \\ k_1, k_2 \neq 0 \\ |k|<C}} b_k(r) \frac{1}{\pi(Q)} \sum_{\substack{C<q \le Q \\ q \textrm{ prime}, \,\, q \nmid a}} 4 \cos^2 (\psi_q(ak_1k_2)) + O \left( \frac{\log \log Q}{\pi(Q)}+ \frac{1}{C}+ \frac{\pi(C)}{\pi(Q)} \right) \end{split} \]
with an implied constant that depends only on $a$. We used the fact that in all terms of the remaining sum, we have $|k_1|, |k_2|<C<q$, consequently the condition $q \nmid k_1, k_2$ is equivalent to $k_1, k_2 \neq 0$. An application of the horizontal Sato--Tate conjecture \eqref{satotate} shows that
\[ \lim_{Q \to \infty} \frac{1}{\pi(Q)} \sum_{\substack{C< q \le Q \\ q \textrm{ prime}, \,\, q \nmid a}} 4 \cos^2 (\psi_q(ak_1 k_2)) = \frac{2}{\pi} \int_0^{\pi} 4 \cos^2 x \sin^2 x \, \mathrm{d}x = 1 \]
uniformly in the finitely many nonzero integers $ak_1 k_2$, $|k| <C$. Hence
\[ \begin{split} \limsup_{Q \to \infty} \frac{1}{\pi(Q)} \sum_{\substack{q \le Q \\ q \textrm{ prime}, \,\, q \nmid a}} (q-1) D_2(\mu_{a,q,2}, \mathrm{Vol},r)^2 &= \sum_{\substack{k \in \mathbb{Z}^2 \\ k_1, k_2 \neq 0 \\ |k| < C}} b_k(r) + O \left( \frac{1}{C} \right) \\ &= \sum_{\substack{k \in \mathbb{Z}^2 \\ k \neq 0}} b_k(r) - 2 \sum_{\substack{k \in \mathbb{Z}^2 \\ k_1=0, \,\, k_2 \neq 0}} b_k(r) + O \left( \frac{1}{C} \right) \\ &= \pi r^2 - \frac{32}{3} r^3 + \pi^2 r^4 + O \left( \frac{1}{C} \right) . \end{split} \]
Letting $C \to \infty$ leads to
\[ \limsup_{Q \to \infty} \frac{1}{\pi(Q)} \sum_{\substack{q \le Q \\ q \textrm{ prime}, \,\, q \nmid a}} (q-1) D_2(\mu_{a,q,2}, \mathrm{Vol},r)^2 = \pi r^2 - \frac{32}{3} r^3 + \pi^2 r^4, \]
and the same holds for the liminf. This finishes the proof of \eqref{averageD2overprimes} under the horizontal Sato--Tate conjecture.

\subsection{Modular parabolas}

\begin{proof}[Proof of Theorem \ref{modularparabola_theorem}] Let $q$ be a prime and let $a \in (\mathbb{Z}/q\mathbb{Z})^{\times}$, $b \in \mathbb{Z}/q\mathbb{Z}$. The Fourier transform of the empirical measure $\mu_{a,b,q}$ is
\[ \widehat{\mu_{a,b,q}}(k) = \frac{1}{q} \sum_{x \in \mathbb{Z}/q\mathbb{Z}} e^{-2 \pi i (k_1 x + k_2 (ax^2+bx))/q} . \]
If $q \nmid k_2$, then this is a quadratic Gauss sum, therefore $|\widehat{\mu_{a,b,q}}(k)|=1/\sqrt{q}$, see \cite[p.\ 50]{IK}. If $q \mid k_2$, then the sum can be evaluated as a geometric series. Hence
\begin{equation}\label{gausssum}
|\widehat{\mu_{a,b,q}}(k)| = \left\{ \begin{array}{ll} 1 & \textrm{if } q \mid k_1, k_2, \\ 0 & \textrm{if } q \nmid k_1, \,\, q \mid k_2, \\ 1/\sqrt{q} & \textrm{if } q \nmid k_2 . \end{array} \right.
\end{equation}
Following the steps in the proof of Theorems \ref{modularhyperbola_theorem} and \ref{modularhyperbolaW2_theorem} establishes the claim $W_2 (\mu_{a,b,q}, \mathrm{Vol}) \asymp (\log q)^{1/2} q^{-1/2}$.

Using Lemma \ref{D2torus_lemma} and \eqref{gausssum} leads to
\[ \begin{split} D_2(\mu_{a,b,q}, \mathrm{Vol},r)^2 &= \sum_{\substack{k \in \mathbb{Z}^2 \\ k \neq 0}} b_k(r) |\widehat{\mu_{a,b,q}}(k)|^2 \\ &= \frac{1}{q} \sum_{\substack{k \in \mathbb{Z}^2 \\ q \nmid k_1, k_2}} b_k(r) + \frac{1}{q} \sum_{\substack{k \in \mathbb{Z}^2 \\ q \nmid k_2, \,\, k_1=0}} b_k(r) + \frac{1}{q} \sum_{\substack{k \in \mathbb{Z}^2 \\ q \mid k_1, \,\, q \nmid k_2 \\ k_1 \neq 0 }} b_k(r) + \sum_{\substack{k \in \mathbb{Z}^2 \\ q \mid k_1, k_2 \\ k \neq 0}} b_k(r) . \end{split} \]
All four sums in the previous formula were estimated in the proof of Theorem \ref{modularhyperbolaD2_theorem}, we thus deduce
\[ \begin{split} D_2(\mu_{a,b,q}, \mathrm{Vol}, r)^2 &= \frac{\pi r^2 (1-\pi r^2)-(32/3) r^3 + 2 \pi^2 r^4}{q} + \frac{(16/3) r^3 - \pi^2 r^4}{q} + O \left( \min \left\{ \frac{r^2}{q^2}, \frac{r}{q^3} \right\} \right) \\ &= \frac{\pi r^2 - (16/3)r^3}{q} + O \left( \min \left\{ \frac{r^2}{q^2}, \frac{r}{q^3} \right\} \right) . \end{split} \]
Further averaging over $0 \le r < 1/2$ yields
\[ D_2 (\mu_{a,b,q}, \mathrm{Vol})^2 = \frac{\pi / 12- 1/6}{q} + O \left( \frac{1}{q^3} \right) . \]
\end{proof}

\section{Equidistribution on the sphere}\label{sphere_section}

We give the proof of Theorem \ref{fibonacci_theorem} in this section. Let $\alpha \in \mathbb{R}$, $N \in \mathbb{N}$, and assume we have $\min_{1 \le n \le \sqrt{N}} n \| n \alpha \|_{\mathbb{R}/\mathbb{Z}} \ge c$ with a fixed constant $c>0$. We may assume $c \le 1/4$.

For technical reasons, we work in two topologically different versions of the unit square. First, the torus $\mathbb{R}^2 / \mathbb{Z}^2$ equipped with the metric
\[ \rho_{\mathbb{R}^d/\mathbb{Z}^d} (x,y) = \| x-y \|_{\mathbb{R}^2/\mathbb{Z}^2} = \sqrt{\| x_1 -y_1 \|_{\mathbb{R}/\mathbb{Z}}^2 + \| x_2 -y_2 \|_{\mathbb{R}/\mathbb{Z}}^2}. \]
Second, the cylinder $(\mathbb{R}/\mathbb{Z}) \times [0,1]$ equipped with the metric
\[ \rho_{(\mathbb{R}/\mathbb{Z})\times [0,1]} (x,y)=\sqrt{\| x_1 -y_1 \|_{\mathbb{R}/\mathbb{Z}}^2 + |x_2-y_2|^2} . \]
One can visualize both spaces as the unit square: in the torus both pairs of opposite sides are identified, whereas in the cylinder only the two vertical sides are identified. We consider the Lambert cylindrical equal area projection as a map defined on the cylinder, $\phi: (\mathbb{R}/\mathbb{Z}) \times [0,1] \to \mathbb{S}^2$,
\[ \phi (x_1, x_2) = \left( 2 \sqrt{x_2-x_2^2} \cos (2 \pi x_1), 2 \sqrt{x_2-x_2^2} \sin (2 \pi x_1), 1-2 x_2 \right) . \]
Note that the first variable $x_1$ appears only in $1$-periodic trigonometric functions, hence $\phi$ is continuous. On the other hand, $\phi$ maps the bottom side of the unit square $x_2=0$ to the north pole of the sphere, and the top side of the unit square $x_2=1$ to the south pole. In particular, $\phi$ would not be continuous as a map defined on the torus.

Consider the map
\[ f: \mathbb{R}^2 / \mathbb{Z}^2 \to (\mathbb{R}/\mathbb{Z}) \times [0,1], \quad f(x_1, x_2) = \left\{ \begin{array}{ll} (x_1, 2 \{ x_2 \}) & \textrm{if } 0 \le \{ x_2 \} < 1/2, \\ (x_1, 2-2\{ x_2 \}) & \textrm{if } 1/2 \le \{ x_2 \} <1, \end{array} \right. \]
where the fractional part $\{ x_2 \}$ is well-defined for $x_2 \in \mathbb{R}/\mathbb{Z}$. Geometrically, $f$ folds the unit square in half along the horizontal line $x_2=1/2$, then vertically stretches by a factor of 2 to obtain a square. The point is that the bottom and top horizontal sides are both mapped to the bottom side of the unit square, which makes $f$ continuous. One readily checks that $f$ is in fact Lipschitz with Lipschitz constant $2$, and preserves the Lebesgue measure $\mathrm{Vol}$.

The preimages of the point $(n \alpha+\mathbb{Z}, n/N) \in (\mathbb{R}/\mathbb{Z}) \times [0,1]$ with respect to $f$ are $(n \alpha +\mathbb{Z}, n/(2N))$ and $(n \alpha +\mathbb{Z}, 1-n/(2N))$ in $\mathbb{R}^2 / \mathbb{Z}^2$. In particular, the empirical measure
\[ \nu_N = \frac{1}{2N} \sum_{n=0}^{N-1} \left( \delta_{(n \alpha+\mathbb{Z}, n/(2N))} + \delta_{(n \alpha + \mathbb{Z}, 1-n/(2N))} \right) \in \mathcal{P} \left( \mathbb{R}^2 / \mathbb{Z}^2 \right) \]
has pushforward
\[ f_* \nu_N = \frac{1}{N} \sum_{n=0}^{N-1} \delta_{(n \alpha + \mathbb{Z}, n/N)} \in \mathcal{P} \left( (\mathbb{R}/\mathbb{Z}) \times [0,1] \right) . \]
We can thus express the empirical measure $\mu_N$ of the set $S_N(\alpha)$ as $\mu_N = \phi_* f_* \nu_N$. Our strategy is to estimate $W_p (\nu_N, \mathrm{Vol})$ in the torus using a smoothing inequality, then estimate the effect of the two pushforward operations.

\begin{lem}\label{fibonacci_lemma} For any $2 \le p < \infty$, we have
\[ W_p (\nu_N, \mathrm{Vol}) \ll p^2 N^{-1/2} \]
with an implied constant that depends only on $c$.
\end{lem}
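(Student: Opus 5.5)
The plan is to apply the smoothing inequality of Theorem \ref{torussmoothing_theorem1} to $\nu_N$ on $\mathbb{R}^2/\mathbb{Z}^2$ with cutoff $H \asymp c\sqrt{N}$, and to bound the resulting exponential sums by geometric series together with the Diophantine hypothesis.

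\emph{Reduction to $p \le c\sqrt{N}$.} If $p > c\sqrt{N}/4$, the claim is trivial: $W_p(\nu_N,\mathrm{Vol}) \le \mathrm{diam}(\mathbb{R}^2/\mathbb{Z}^2) \ll 1 \ll p N^{-1/2}$ with a constant depending on $c$. So assume $2 \le p \le c\sqrt{N}/4$ and take $H = \lfloor c\sqrt{N}/2 \rfloor$. Then $H > (p+2)/2$, and the first term of Theorem \ref{torussmoothing_theorem1} is $\ll p/H \ll p N^{-1/2}$.

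\emph{The Fourier transform.} It is
\[ \widehat{\nu_N}(k) = \frac{1}{2N}\sum_{n=0}^{N-1} e^{-2\pi i k_1 n\alpha}\left(e^{-\pi i k_2 n/N}+e^{\pi i k_2 n/N}\right) = \frac{1}{2N}\sum_{\pm}\sum_{n=0}^{N-1} e^{-2\pi i n(k_1\alpha \pm k_2/(2N))} . \]
Each inner sum is geometric, so $|\widehat{\nu_N}(k)| \ll N^{-1}\max_{\pm}\|k_1\alpha \pm k_2/(2N)\|_{\mathbb{R}/\mathbb{Z}}^{-1}$.

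\emph{Terms with $k_1=0$.} Here $0<|k_2|\le H<N$, and the cosine sum $\sum_{n=0}^{N-1}\cos(\pi k_2 n/N)$ evaluates exactly to $0$ or $1$ according to the parity of $k_2$. Hence $|\widehat{\nu_N}(0,k_2)|\le 1/N$, and these terms contribute $\ll N^{-q}$ to $\sum |\widehat{\nu_N}(k)|^q/|k|^q$.

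\emph{Terms with $k_1 \ne 0$.} Group these dyadically, $2^j \le \|k\|_\infty < 2^{j+1}$ with $2^{j}\le H$.
\begin{itemize}
\item \emph{Lower bound.} Since $|k_1| \le H \le \sqrt N$, the hypothesis gives $\|k_1\alpha\| \ge c/|k_1|$. Together with $|k_2|/(2N) \le H/(2N) \le c/(4|k_1|)$, this yields $\|k_1\alpha\pm k_2/(2N)\| \ge c/2^{j+2}$.
\item \emph{Separation.} For two distinct pairs in the block, the values $k_1\alpha\pm k_2/(2N)$ are $\gg c/2^j$-separated mod $1$. If the $k_1$-components differ, the same argument applied to the difference $k_1-k_1'$, which is at most $2^{j+2}\le\sqrt N$ in size, gives this. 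If $k_1=k_1'$, the values differ by a nonzero multiple of $1/(2N)$ of size at most $H/N$, and I need to check that this case is harmless. The clean route is to compare the two sign choices separately and handle equal $k_1$ by direct spacing.
\item \emph{Block sum.} The roughly $4^j$ values, ordered by distance to $0$, therefore satisfy $\|\cdot\| \gg m c/2^j$ for the $m$-th one. Hence
\[ \sum_{\text{block}} |\widehat{\nu_N}(k)|^q \ll N^{-q}\sum_{m\ge1}(2^j/(cm))^q \ll p\,c^{-q}N^{-q}2^{jq}, \]
using $\sum_m m^{-q} \ll 1/(q-1) \asymp p$.
\item \emph{Summing over blocks.} Dividing by $|k|^q \asymp 2^{jq}$ and summing over the $O(\log N)$ blocks gives $\sum_{k\ne0} |\widehat{\nu_N}(k)|^q/|k|^q \ll p N^{-q}\log N$.
\end{itemize}

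\emph{Conclusion.} Theorem \ref{torussmoothing_theorem1} yields
\[ W_p(\nu_N,\mathrm{Vol}) \ll pN^{-1/2} + p\,(p\log N)^{1/q}N^{-1} \ll p^2 N^{-1/2}, \]
which is the claim.

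The main obstacle is the separation/counting step for $k_1\neq 0$. One must make sure that no frequency $k_1\alpha \pm k_2/(2N)$ comes within $o(2^{-j})$ of an integer; this is exactly where $\min_{n\le\sqrt N} n\|n\alpha\|\ge c$ and the restriction $H \asymp c\sqrt N$ are used. One must also make sure the points in each dyadic block are well spaced, including the two sign choices and pairs with equal $k_1$. I would first verify the spacing claim via the difference argument, treating the $\pm$ families separately, and fall back on a cruder count of how many points can lie in each interval of length $c/2^{j}$ if exact separation fails.
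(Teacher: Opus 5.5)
Your overall route is the paper's: Theorem \ref{torussmoothing_theorem1} with $H\asymp\sqrt{N}$, geometric sums, $\|k_1\alpha\|\ge c/|k_1|$ for $|k_1|\le\sqrt N$, and the exact parity evaluation at $k_1=0$ (here $\|\cdot\|=\|\cdot\|_{\mathbb{R}/\mathbb{Z}}$). However, the separation step is false, and so is the bound $\sum_{k\neq 0}|\widehat{\nu_N}(k)|^q/|k|^q\ll pN^{-q}\log N$ it is meant to give. Fix $k_1$. The $\asymp 2^j$ values of $k_2$ in a dyadic block give frequencies $k_1\alpha\pm k_2/(2N)$ spaced by $1/(2N)$, all within $2^j/N$ of $k_1\alpha$. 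This is far below the scale $c/2^j$, since $2^{2j}\le H^2\ll cN$. Pairs with $k_2'=-k_2$ and opposite signs even coincide. So a block consists of $\asymp 2^j$ clusters of $\asymp 2^j$ points each, and the ordering ``$m$-th point at distance $\gg mc/2^j$'' fails.

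The loss is real, not an artifact of the upper bound. Put $\theta_\pm=k_1\alpha\pm k_2/(2N)$. Both geometric sums have the same numerator $1-(-1)^{k_2}e^{-2\pi i N k_1\alpha}$. Moreover $1/(1-e^{-2\pi i\theta_\pm})=\frac{1}{2}-\frac{i}{2}\cot(\pi\theta_\pm)$, and when $\|k_1\alpha\|$ is small the two cotangents have the same sign and size $\asymp 1/\|k_1\alpha\|$. Hence for one parity class of $k_2$ you genuinely have $|\widehat{\nu_N}(k)|\gg 1/(N\|k_1\alpha\|)$. Take $k_1$ a continued fraction denominator of size $\asymp H$ (e.g.\ a Fibonacci number for the golden mean) and sum over $|k_2|\le|k_1|$ in that parity class. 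This already gives $\gg |k_1|N^{-q}\asymp N^{1/2-q}$, contradicting your $\log N$ bound.

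The repair is the ``cruder count'' you allude to, and it is exactly the paper's argument; what you did not check is that it still suffices.
\begin{itemize}
\item Since $|k_2|/(2N)\le c/(4|k_1|)\le\frac{1}{4}\|k_1\alpha\|$, you get $|\widehat{\nu_N}(k)|\ll 1/(N\|k_1\alpha\|)$ uniformly in $k_2$.
\item Use separation only in $k_1$. The bound $\|(k_1-k_1')\alpha\|\ge c/|k_1-k_1'|$ gives $\sum_{0<|k_1|\le Q}\|k_1\alpha\|^{-q}\ll pQ^q$.
\item The $\ll Q$ choices of $k_2$, after dividing by $|k|^q\gg Q^q$, then give $\ll pQ$ for the block $Q/2<\|k\|_\infty\le Q$.
\item Summing dyadically, the $k_1\neq0$ terms contribute $\ll pHN^{-q}\ll pN^{1/2-q}$. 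This is a factor $\sqrt N/\log N$ worse than you claimed.
\end{itemize}
It is still enough, because $p\,(pN^{1/2-q})^{1/q}=p^{1+1/q}N^{1/(2q)-1}\le p^2N^{-1/2}$ for $q>1$. This slack in the exponent, not a logarithmic bound, is what closes the argument.

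Two minor points. The $k_1=0$ terms contribute $\ll pN^{-q}$ rather than $\ll N^{-q}$, since $\sum_{n\ge1}n^{-q}\asymp 1/(q-1)\asymp p$; this is harmless. Your reduction to $p\le c\sqrt N/4$ via the diameter bound is valid and a little simpler than the paper's appeal to Lemma \ref{WqWplemma}.
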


\begin{proof} We may assume $p+2 \le \lfloor \sqrt{cN} \rfloor$ by Lemma \ref{WqWplemma}. Let $1<q \le 2$ be such that $1/p+1/q=1$, and let us apply Theorem \ref{torussmoothing_theorem1} with $H=\lfloor \sqrt{cN} \rfloor$ to deduce
\[ W_p (\nu_N, \mathrm{Vol}) \ll \frac{p}{\sqrt{N}} + p \bigg( \sum_{\substack{k \in [-H,H]^2 \\ k \neq 0}} \frac{|\widehat{\nu_N(k)}|^q}{|k|^q} \bigg)^{1/q} . \]
The Fourier transform of $\nu_N(k)$ for $k=(k_1, k_2)$, $k_1 \neq 0$, can be estimated as
\[ \begin{split} |\widehat{\nu_N}(k)| &= \frac{1}{2N} \bigg| \sum_{n=0}^{N-1} \left( e^{-2 \pi i (k_1 \alpha + k_2/(2N))n} + e^{-2 \pi i (k_1 \alpha - k_2 /(2N))n} \right) \bigg| \\ &\le \frac{1}{N} \left( \frac{1}{|1-e^{-2 \pi i (k_1 \alpha + k_2 /(2N))}|} +\frac{1}{|1-e^{-2 \pi i (k_1 \alpha - k_2 /(2N))}|} \right) \\ &\ll \frac{1}{N} \left( \frac{1}{\| k_1 \alpha + k_2/(2N) \|_{\mathbb{R}/\mathbb{Z}}} + \frac{1}{\| k_1 \alpha - k_2/(2N) \|_{\mathbb{R}/\mathbb{Z}}} \right) . \end{split} \]
For all $k \in [-H,H]^2$, $k_1 \neq 0$, we have $|k_1|, |k_2| \le H \le \sqrt{cN}$, therefore $\| k_1 \alpha \|_{\mathbb{R}/\mathbb{Z}} \ge c/|k_1| \ge |k_2|/N$ and
\[ \| k_1 \alpha \pm k_2/(2N) \|_{\mathbb{R}/\mathbb{Z}} \ge \| k_1 \alpha \|_{\mathbb{R}/\mathbb{Z}} - \frac{|k_2|}{2N} \ge \frac{1}{2} \| k_1 \alpha \|_{\mathbb{R}/\mathbb{Z}} . \]
Consequently, $|\widehat{\nu_N}(k)| \ll 1/(N \| k_1 \alpha \|_{\mathbb{R}/\mathbb{Z}})$ whenever $k_1 \neq 0$. If $k \in [-H,H]^2$, $k_1 =0$, then
\[ \begin{split} \widehat{\nu_N}(k) &= \frac{1}{2N} \sum_{n=0}^{N-1} \left( e^{-2 \pi i k_2 n/(2N)} + e^{2 \pi i k_2 n /(2N)} \right) = \frac{1}{2N} \left( \frac{1-(-1)^{k_2}}{1-e^{- 2 \pi i k_2 /(2N)}} + \frac{1-(-1)^{k_2}}{1-e^{2 \pi i k_2/(2N)}} \right) \\ &= \left\{ \begin{array}{ll} 0 & \textrm{if } k_2 \textrm{ is even}, \\ 1/N & \textrm{if } k_2 \textrm{ is odd}. \end{array} \right. \end{split} \]
Hence
\[ W_p (\nu_N, \mathrm{Vol}) \ll \frac{p}{\sqrt{N}} + p \bigg( \sum_{\substack{k \in [-H,H]^2 \\ k_1 \neq 0}} \frac{1}{N^q |k|^q \| k_1 \alpha \|_{\mathbb{R}/\mathbb{Z}}^q} + \sum_{\substack{k_2 \in [-H,H] \\ k_2 \neq 0}} \frac{1}{N^q |k_2|^q} \bigg)^{1/q} . \]
The second sum in the previous formula can be estimated as
\[ \sum_{\substack{k_2 \in [-H,H] \\ k_2 \neq 0}} \frac{1}{N^q |k_2|^q} \le \frac{2}{N^q} \sum_{n=1}^{\infty} \frac{1}{n^q} \ll \frac{1}{(q-1)N^q} \ll \frac{p}{N^q}. \]
Let $1 \le Q \le H$. In particular, $2Q \le 2\sqrt{cN} \le \sqrt{N}$. For any $k_1, k_1' \in [-Q,Q]$, $k_1 \neq k_1'$, we have $\| k_1 \alpha - k_1' \alpha \|_{\mathbb{R}/\mathbb{Z}} \ge c/|k_1-k_1'| \gg 1/Q$. By the pigeonhole principle, we thus have
\[ \sum_{0<|k_1| \le Q} \frac{1}{\| k_1 \alpha \|_{\mathbb{R}/\mathbb{Z}}^q} \ll \sum_{n=1}^{\infty} \frac{1}{(n/Q)^q} \ll p Q^q. \]
Therefore
\[ \sum_{Q/2 < \| k \|_{\infty} \le Q} \frac{1}{|k|^q \| k_1 \alpha \|_{\mathbb{R}/\mathbb{Z}}^q} \ll \frac{1}{Q^q} \sum_{|k_2| \le Q} \sum_{0<|k_1| \le Q} \frac{1}{\| k_1 \alpha \|_{\mathbb{R}/\mathbb{Z}}^q} \ll p Q, \]
and a dyadic decomposition of the interval $[1,H]$ leads to
\[ \sum_{\substack{k \in [-H,H]^2 \\ k_1 \neq 0}} \frac{1}{|k|^q \| k_1 \alpha \|_{\mathbb{R}/\mathbb{Z}}^q} \ll p H \ll p \sqrt{N} . \]
Hence
\[ W_p (\nu_N, \mathrm{Vol}) \ll \frac{p}{\sqrt{N}} + p \bigg( \frac{p \sqrt{N}}{N^q} \bigg)^{1/q} . \]
The exponent of $N$ in the second term is $1/(2q) -1 < -1/2$, and the claim follows.
\end{proof}

Since $f$ is Lipschitz with Lipschitz constant 2 and preserves the Lebesgue measure $\mathrm{Vol}$, an application of Lemma \ref{lipschitz_lemma} gives
\begin{equation}\label{fpushforward}
W_p (f_* \nu_N, \mathrm{Vol}) \le 2 W_p (\nu_N, \mathrm{Vol}) .
\end{equation}
In contrast, the Lambert projection $\phi: (\mathbb{R}/\mathbb{Z}) \times [0,1] \to \mathbb{S}^2$ is not Lipschitz, so Lemma \ref{lipschitz_lemma} does not apply. We can nevertheless estimate the effect of the pushforward with respect to $\phi$ as follows.

\begin{lem}\label{phipushforward_lemma} Let $\mu \in \mathcal{P}((\mathbb{R}/\mathbb{Z}) \times [0,1])$ and $1 \le p < 2$. For any $p'>2p/(2-p)$, we have
\[ W_p (\phi_* \mu, \mathrm{Vol}) \ll \left( 1-\frac{pp'}{2(p'-p)} \right)^{1/p'-1/p} W_{p'}(\mu, \mathrm{Vol}) \]
with a universal implied constant.
\end{lem}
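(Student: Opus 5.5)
Take an optimal coupling $\pi$ of $\mu$ and $\mathrm{Vol}$ on the cylinder for the cost $\rho^{p'}$. Since $\phi$ preserves the normalized area, $\phi_*\mathrm{Vol}=\mathrm{Vol}$, so $(\phi\times\phi)_*\pi$ is a coupling of $\phi_*\mu$ and $\mathrm{Vol}$ on $\mathbb{S}^2$. This gives
\[ W_p(\phi_*\mu,\mathrm{Vol})^p \le \int \rho_{\mathbb{S}^2}(\phi(x),\phi(y))^p \, \mathrm{d}\pi(x,y). \]
Everything then reduces to a pointwise distortion estimate for $\phi$, followed by H\"older's inequality.

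\emph{Step 1: local Lipschitz bound.} We bound $\rho_{\mathbb{S}^2}(\phi(x),\phi(y))$ by a weighted cylinder distance. The differential of $\phi$ stretches the horizontal direction by $2\pi\cdot 2\sqrt{x_2-x_2^2}\le 2\pi$ and the vertical direction by $|\partial_{x_2}\phi|=1/\sqrt{x_2-x_2^2}$. So $\phi$ fails to be Lipschitz only near $x_2\in\{0,1\}$, with local constant $\asymp w(x_2)=\min\{x_2,1-x_2\}^{-1/2}$. A cleaner pointwise bound comes from composing with the H\"older map: $\sqrt{x_2}$ near the pole controls geodesic distance, giving
\[ \rho_{\mathbb{S}^2}(\phi(x),\phi(y))\ll |x_2-y_2|^{1/2}+\|x_1-y_1\|_{\mathbb{R}/\mathbb{Z}}. \]
This H\"older-$1/2$ bound is too weak on its own. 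Instead I will use
\[ \rho_{\mathbb{S}^2}(\phi(x),\phi(y))\ll \big(1+w(x_2)\big)\,\rho(x,y), \]
valid after symmetrizing in $x,y$ (take the better of the two endpoints, or integrate $|D\phi|$ along the segment). Along the segment from $y$ to $x$ the vertical stretch integrates to $|\sqrt{u}-\sqrt{v}|$-type quantities, which are $\le |u-v|/\sqrt{\max(u,v)}$. This bounds the distortion by the weight at whichever point is farther from the pole. Because the second marginal is $\mathrm{Vol}$, it is convenient to put the weight on the $\mathrm{Vol}$-variable $y$. So I aim for
\[ \rho_{\mathbb{S}^2}(\phi(x),\phi(y))\ll \big(1+w(y_2)\big)\rho(x,y), \]
using $|\sqrt u-\sqrt v|\le |u-v|/\sqrt{v}$ for $u,v\ge0$.

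\emph{Step 2: H\"older.} With exponents $p'/p$ and $s=p'/(p'-p)$,
\[ \int \rho_{\mathbb{S}^2}^p\,\mathrm{d}\pi\ll \Big(\int (1+w(y_2))^{ps}\,\mathrm{d}\mathrm{Vol}(y)\Big)^{1/s}\Big(\int\rho^{p'}\,\mathrm{d}\pi\Big)^{p/p'}. \]
The last factor is $W_{p'}(\mu,\mathrm{Vol})^p$. The first integral is $\asymp\int_0^{1/2}t^{-ps/2}\,\mathrm{d}t$, which is finite exactly when $ps/2<1$, i.e.\ $pp'/(2(p'-p))<1$, i.e.\ $p'>2p/(2-p)$. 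Its value is $\asymp (1-\frac{pp'}{2(p'-p)})^{-1}$. Taking the $1/p$ power gives the factor $(1-\frac{pp'}{2(p'-p)})^{-1/(ps)}=(1-\frac{pp'}{2(p'-p)})^{1/p'-1/p}$, as claimed.

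\emph{Main obstacle.} The delicate step is the pointwise inequality with the weight at $y$ alone. The bound $|\sqrt u-\sqrt v|\le|u-v|/\sqrt v$ handles the polar (vertical) component. For the horizontal component, the factor $2\sqrt{x_2-x_2^2}$ is bounded, so it contributes only $O(\|x_1-y_1\|)$. Near $x_2$ close to $1$ the symmetric argument applies with $1-x_2$ in place of $x_2$. I will write $\phi$ in coordinates $(\theta,z)$ with $z=1-2x_2$ and use the chordal distance $|\phi(x)-\phi(y)|\asymp\rho_{\mathbb{S}^2}$. Then
\[ |\phi(x)-\phi(y)|\le |z_x-z_y|+\big|\sqrt{1-z_x^2}-\sqrt{1-z_y^2}\big|+2\pi\|x_1-y_1\|, \]
and the middle term is bounded via $|\sqrt a-\sqrt b|\le|a-b|/\sqrt b$ with $b=1-z_y^2\asymp\min(y_2,1-y_2)$.
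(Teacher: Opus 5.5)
Your proposal is correct and follows essentially the same route as the paper: push forward an optimal $W_{p'}$ coupling under $\phi\times\phi$, prove the pointwise bound $\rho_{\mathbb{S}^2}(\phi(x),\phi(y)) \ll \rho(x,y)\,\bigl(y_2^{-1/2}+(1-y_2)^{-1/2}\bigr)$ with the weight placed on the $\mathrm{Vol}$-distributed variable $y$, and apply H\"older with exponents $p'/p$ and $p'/(p'-p)$. The only difference is cosmetic: you derive the pointwise bound from explicit cylindrical coordinates and $|\sqrt{a}-\sqrt{b}|\le |a-b|/\sqrt{b}$, which handles both poles at once, whereas the paper integrates $|\partial\phi/\partial x_2|\ll x_2^{-1/2}+(1-x_2)^{-1/2}$ along a vertical segment.
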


\begin{proof} For the sake of readability, let $M=(\mathbb{R}/\mathbb{Z}) \times [0,1]$ denote the cylinder. Let $\pi \in \mathrm{Coupling}(\mu, \mathrm{Vol})$ for which the minimum in the definition of the Wasserstein metric $W_{p'}(\mu, \mathrm{Vol})$ is attained:
\[ W_{p'} (\mu, \mathrm{Vol}) = \left( \int_{M \times M} \rho_M (x,y)^{p'} \, \mathrm{d}\pi (x,y) \right)^{1/p'} . \]
Let $F: M \times M \to \mathbb{S}^2 \times \mathbb{S}^2$, $F(x,y)=(\phi(x), \phi(y))$. One readily checks that the pushforward $F_* \pi \in \mathcal{P}(\mathbb{S}^2 \times \mathbb{S}^2)$ is a coupling between $\phi_* \mu$ and $\phi_* \mathrm{Vol}=\mathrm{Vol}$. By the definition of the Wasserstein metric,
\[ \begin{split} W_p (\phi_* \mu, \mathrm{Vol}) &\le \left( \int_{\mathbb{S}^2 \times \mathbb{S}^2} \rho_{\mathbb{S}^2} (u,v)^p \, \mathrm{d} (F_* \pi)(u,v) \right)^{1/p} \\ &= \left( \int_{M \times M} \rho_{\mathbb{S}^2} (\phi(x), \phi(y))^p \, \mathrm{d}\pi (x,y) \right)^{1/p} . \end{split} \]
The partial derivatives of the Lambert map are
\[ \begin{split} \frac{\partial \phi}{\partial x_1} &= \left( -4 \pi \sqrt{x_2-x_2^2} \sin (2 \pi x_1), 4 \pi \sqrt{x_2-x_2^2} \cos (2 \pi x_1), 0 \right), \\ \frac{\partial \phi}{\partial x_2} &= \left( \frac{1-2x_2}{\sqrt{x_2-x_2^2}} \cos (2 \pi x_1), \frac{1-2x_2}{\sqrt{x_2-x_2^2}} \sin (2 \pi x_1), -2 \right) . \end{split} \]
In particular,
\[ \left| \frac{\partial \phi}{\partial x_1} \right| \ll 1 \qquad \textrm{and} \qquad \left| \frac{\partial \phi}{\partial x_2} \right| \ll \frac{1}{\sqrt{x_2-x_2^2}} \ll \frac{1}{\sqrt{x_2}} + \frac{1}{\sqrt{1-x_2}} . \]
For all $x=(x_1, x_2), y=(y_1, y_2) \in M$, we thus deduce
\[ \begin{split} |\phi (x) - \phi(y)| &\le |\phi (x_1, x_2) - \phi (y_1, x_2)| + |\phi (y_1, x_2) - \phi (y_1, y_2)| \\ &\ll \| x_1-y_1 \|_{\mathbb{R}/\mathbb{Z}} + \left| \int_{x_2}^{y_2} \left( \frac{1}{\sqrt{t}} + \frac{1}{\sqrt{1-t}} \right) \, \mathrm{d}t \right| \\ &\ll \| x_1-y_1 \|_{\mathbb{R}/\mathbb{Z}} + \frac{|x_2-y_2|}{\sqrt{x_2} + \sqrt{y_2}} + \frac{|x_2-y_2|}{\sqrt{1-x_2} + \sqrt{1-y_2}} \\ &\ll \rho_M (x,y) \left( \frac{1}{\sqrt{y_2}} + \frac{1}{\sqrt{1-y_2}} \right) . \end{split} \]
Since the geodesic metric $\rho_{\mathbb{S}^2}$ is bounded above by a constant times the Euclidean metric, we have
\[ \rho_{\mathbb{S}^2} (\phi(x), \phi(y))^p \ll \rho_M (x,y)^p \left( \frac{1}{y_2^{p/2}} + \frac{1}{(1-y_2)^{p/2}} \right) . \]
An application of the H\"older inequality with exponents $p'/p>1$ and $p'/(p'-p)$ leads to
\[ \begin{split} W_p (\phi_* \mu, \mathrm{Vol}) &\ll \left( \int_{M \times M} \rho_M (x,y)^p \left( \frac{1}{y_2^{p/2}} + \frac{1}{(1-y_2)^{p/2}} \right) \, \mathrm{d}\pi (x,y) \right)^{1/p} \\ &\ll \left( \int_{M \times M} \rho_M (x,y)^{p'} \, \mathrm{d}\pi (x,y) \right)^{1/p'} \\ &{\phantom{\ll}} \times \left( \int_{M \times M} \left( \frac{1}{y_2^{pp'/(2(p'-p))}} + \frac{1}{(1-y_2)^{pp'/(2(p'-p))}} \right) \, \mathrm{d}\pi (x,y) \right)^{(p'-p)/(pp')} . \end{split} \]
The first integral in the previous formula is $W_{p'}(\mu, \mathrm{Vol})$ because $\pi$ was chosen to be an optimal coupling. Since the second integral is a function of $y$, $\pi$ is a coupling between $\mu$ and $\mathrm{Vol}$, and $pp'/(2(p'-p))<1$ by assumption, we obtain the desired estimate
\[ \begin{split} W_p (\phi_* \mu, \mathrm{Vol}) &\ll W_{p'}(\mu, \mathrm{Vol}) \left( \int_M \left( \frac{1}{y_2^{pp'/(2(p'-p))}} + \frac{1}{(1-y_2)^{pp'/(2(p'-p))}} \right) \, \mathrm{d}\mathrm{Vol}(y) \right)^{(p'-p)/(pp')} \\ &\ll W_{p'}(\mu, \mathrm{Vol}) \left( \int_0^{1/2} \frac{1}{t^{pp'/(2(p'-p))}} \, \mathrm{d}t \right)^{1/p-1/p'} \\ &\ll W_{p'}(\mu, \mathrm{Vol}) \left( 1-\frac{pp'}{2(p'-p)} \right)^{1/p'-1/p} . \end{split} \]
\end{proof}

\begin{proof}[Proof of Theorem \ref{fibonacci_theorem}] Fix $1 \le p < 2$, and let $p'>2p/(2-p)$ to be chosen. An application of Lemmas \ref{fibonacci_lemma} and \ref{phipushforward_lemma} and formula \eqref{fpushforward} lead to
\[ \begin{split} W_p (\mu_N, \mathrm{Vol}) &= W_p (\phi_* f_* \nu_N, \mathrm{Vol}) \ll \left( 1-\frac{pp'}{2(p'-p)} \right)^{1/p'-1/p} W_{p'}(f_* \nu_N, \mathrm{Vol}) \\ &\ll \left( 1-\frac{pp'}{2(p'-p)} \right)^{1/p'-1/p} W_{p'}(\nu_N, \mathrm{Vol}) \ll \left( 1-\frac{pp'}{2(p'-p)} \right)^{1/p'-1/p} (p')^2 N^{-1/2} . \end{split} \]
Choosing $p'=4p/(2-p)$ proves the claim $W_p (\mu_N, \mathrm{Vol}) \ll (2-p)^{-5/2} N^{-1/2}$.
\end{proof}

\vspace{5mm}

\noindent\textsc{Bence Borda}

\noindent\textsc{Department of Mathematics, University of Sussex, Brighton, BN1 9RH, United Kingdom}

\noindent\textsc{Email:} \texttt{b.borda@sussex.ac.uk}

\vspace{5mm}

\noindent\textsc{Filippo Giannoni}

\noindent\textsc{Department of Mathematics, University of Sussex, Brighton, BN1 9RH, United Kingdom}

\noindent\textsc{Email:} \texttt{f.giannoni@sussex.ac.uk}

\end{document}